\newcommand{\T}{T}
\newcommand{\R}{\mathbb{R}}
\newcommand{\Rex}{\overline{\mathbb{R}}}
\newcommand{\norma}{\|\cdot\|}
\newcommand{\tto}{\rightrightarrows}
\newtheorem{example}[theorem]{Example}
\let\varepsilon\varepsilon 
\DeclareMathOperator{\Ls}{Ls}
\newcommand{\N}{\mathbb{N}}
\DeclareMathOperator{\co}{co}
\DeclareMathOperator{\cco}{\overline{co}}
\DeclareMathOperator{\grafo}{gph}
\DeclareMathOperator{\dom}{dom}
\DeclareMathOperator{\inte}{int}
\DeclareMathOperator{\spn}{span}
\newcommand{\Intf}[1]{{E}_{#1}}
\newcommand{\IntfLp}[1]{{I}_{#1}}
\DeclareMathOperator{\Asub}{\hat{\partial}}
\DeclareMathOperator{\sub}{\partial}
\newcommand{\1}{\mathds{1}}
\let\epsilon\varepsilon
\crefname{hypothesis}{Hypothesis}{Hypotheses}
\title{Sequential and exact subdifferential calculus rules for nonconvex integral functions\thanks{Submitted to the editors DATE.\funding{This work is partially supported by CONICYT grants: Fondecyt 1151003, Fondecyt 1150909, Basal PFB -03 and Basal FB0003, CONICYT-PCHA/doctorado Nacional / 2014-21140621.}}}
\author{Rafael Correa\thanks{Universidad de O'Higgins, Rancagua,  Chile and DIM-CMM of Universidad de Chile, Santiago, Chile 
		(\email{rcorrea@dim.uchile.cl}).}
	\and Abderrahim Hantoute\thanks{Center for Mathematical Modeling (CMM), Universidad de Chile, Santiago, Chile
		(\email{ahantoute@dim.uchile.cl}).}
	\and Pedro P\'erez-Aros\thanks{Instituto de Ciencias de la Ingenier\'ia, Universidad de O'Higgins, Rancagua, Chile 
		(\email{pedro.perez@uoh.cl}).} }
\begin{document}

\maketitle

\begin{abstract}
 We are concerned in this work  with  the subdifferential of the integral functional 
 $$
 \Intf{f}(x)=\int_{\T} f(t,x)d\mu(t),
 $$ 
 for normal integrands $f$ (possibly, not convex), defined in a $\sigma$-finite measure space and a separable Banach space whose dual is also separable. By means of techniques of variational analysis, we establish some sequential formulae to estimate  the Fr\'echet  subdifferential  of $\Intf{f}$. We also give upper-estimates for the limiting subdifferential of $\Intf{f}$ for both Lipschitz and non-Lipschitz integrands. This last result is based on a Lipschitz-like condition.
 \end{abstract}

\begin{keywords}
 Normal integrands, Integral functions and functionals, Subdifferential calculus.
\end{keywords}

\begin{AMS}
  49J52, 28B05, 28B20
\end{AMS}
\section{Introduction\label{Sect1}}
 We are interested in this paper to the variations at a first-order of the integral function given in the form 
$$
\Intf{f}(x)=\int_{\T} f(t,x)d\mu(t),
$$ 
for a normal integrand $f:{\T}\times X\to \mathbb{R}\cup\{+\infty\}$, which is defined on a measurable space $({\T},\mathcal{A},\mu)$ and an infinite-dimensional Banach space $(X,\| \cdot \|)$. The normal integrand is possibly non-convex. Our aim  is to provide sequential and exact formulae of the nonconvex  subdifferentials of the integral function $\Intf{f}$, including the Fr\'echet, the limiting, and the Clarke-Rockafellar subdifferentials. This will be achieved by means of estimates that make use of the data, namely the measurable selections of the corresponding  subdifferential of the integrand $f$. The function $\Intf{f}$ can also be regarded as a functional operator acting on the subspace of constant functions on $T$, as in \cite{0036-0279-23-6-R02} and \cite{MR0390870} (see, also, \cite{MR0236689,MR0310612,MR0372610,0036-0279-30-2-R03,MR0467310,MR2291564,MR2444461,MR3235316,MR3767752}), but here in the current work $\Intf{f}$ is considered as such; that is, as a continuous sum. 

 This work is inspired by the results of Ioffe \cite{MR2291564} and  Lopez-Thibault \cite{MR2444461}. In \cite{MR2291564}, where the author deals with convex normal integrands,  one can find characterizations of the Fenchel subdifferential of the integral function $\Intf{f}$, given by means of limiting processes relying on the subdifferential of the data. Such characterizations do not involve any qualification condition. Namely, assuming that $f$ is a convex normal integrand, satisfying a mere linear growth condition, given as
\begin{align}
	f(t,x) \geq \langle a^*(t) , x \rangle + \alpha(t) ,\; \text{ for all } t\in {\T}, \; x\in X,
\end{align}
for  integrable functions $a^* : {\T}\to X^\ast$  and  $\alpha :{\T} \to \R$, the subdifferential of  $\Intf{f}$ at a given point $x\in X$ can be characterized  in terms of  limits of the integral of measurable selections of the Fenchel subdifferential of the integrand $f$ at nearby points $x_n(t)$, converging to $x$ in an appropriate way. More precisely, it is proved in \cite{MR2291564} that the sequence  of the measurable functions $x_n(\cdot)$ can be taken in the space of $p$-integrable functions for any $p \in [1,+\infty)$. The question of whether the same property holds for the case when $p=+\infty$ is treated in Lopez-Thibault \cite{MR2444461}, where the authors provided another approach to this problem, by using convex fuzzy-calculus subdifferential rules. 

We continue this line of research by deriving new sequential and exact formulae for the subdifferential of the integral function $\Intf{f}$ for non-convex normal integrands $f$. Our approach consists in using the concept of \emph{robust infima} (see \cref{ROBUSTEDLOCALMINIMUM}), which we combine with some variational principles, applied in the space $X$ as well as in the functional space of $p$-integrable functions. Using this we  extend and improve the results of   \cite{MR2291564} and \cite{MR2444461} (see also \cite{INTECONV,INTECONV2}). 

Let us mention that all the results given in this paper have been developed in \cite{modelacionsubdifferential}. Related results can be found in \cite{MR3767752,MR2772136,MR3639281}.  Here, for the sake of simplifying the presentation, we only give the aforementioned results for the Fr\'echet and the limiting subdifferentials, both based on the notion of robust infima, instead of the common approach using chain rules as developped in  \cite{MR2444461}.


This work is organized as follows: \Cref{NotationCHAP5} is dedicated to recall some notions of variational analysis and the generalized subdifferentiation that are needed in the sequel. In \Cref{section:Robusted} we adapt to our setting the notion of robust local minima (see  \cref{ROBUSTEDLOCALMINIMUM}), which allows applying Borwein-Preiss' variational principle. Next,  we give sequential formulae for  the Fr\'echet subdifferential of the integral $\Intf{f}$ (see  \cref{theoremsubdiferential,theoremsubdiferentialinfinity,corollary:convex}). In  \Cref{Section:Limitingsub} we introduce a Lipschitz-like condition, which  generalizes the classical Lipschitz continuity of integral functionals (see, e.g., \cite[Theorem 2.7.2]{MR1058436}), and  leads to upper-estimates for the limiting subdifferential, as well as for the Clarke-Rockafellar subdifferential of the integral functional. Finally, for the sake of simplifying the presentation of the work, the technical results and proofs are given in the Appendix.

\section{Notation and preliminary results}\label{NotationCHAP5}

In the following, $(X,\| \cdot \|)$ will be a separable Asplund space and $X^*$ its dual, which means that $X^\ast$ is also separable (see, e.g., \cite{MR2766381} for more details). The norm in $X^\ast$ will be denoted also by $\| \cdot \|$. For a point $x\in X$ and $r>0$, the closed ball of radius $r$ and centered  at $x$ is denoted by $\mathbb{B}_X(x,r)$, or simply $\mathbb{B}(x,r)$ when no confusion occurs, particularly, the unit closed ball is simply denoted by $\mathbb{B}$. The bilinear form $ \langle \cdot ,\cdot \rangle :X^*\times  X \to \R$ is given by $\langle x^*,x \rangle :=\langle x,x^*\rangle :=x^*(x) $. The weak$^{\ast }$-topology on $X^*$ is  denoted by $w(X^{\ast },X)$ ($w^{\ast },$ for
short).  We write $\Rex:= \R\cup\{-\infty,+\infty \}$ with  the conventions  $1/\infty=0$, $0\cdot \infty= 0 = 0\cdot(- \infty)$ and $ \infty+( -\infty)=(-\infty)+\infty=+\infty$. 

For a set $A\subseteq X$ (or $\subseteq X^*$), we denote by $\inte(A)$, $\overline{A}$, $\co(A)$, and $\cco(A)$  the interior, the closure, the \emph{convex hull} and  the \emph{closed convex hull} of $A$, respectively. The linear space spanned by $A$ is denoted by   $\spn(A)$ and the \emph{negative polar cone} of $A$ is the set 
\begin{align*}
A^- &:= \{ x^*\in X^\ast \mid \langle x^*,x\rangle \leq 0, \forall x\in A\}.
\end{align*}
The \emph{indicator} and the \emph{support} functions of a set $A$ ($\subseteq X,X^\ast$) are, respectively,
\[  \delta_A(x) := \begin{cases}
0\qquad & x\in A,\\
+\infty& x\notin A,
\end{cases} \text{ and } \sigma_A(x^*):=\sup\{  \langle x^*,x \rangle : x\in A \}. \]

For a given function $f:X\to\Rex$, the (effective) \emph{domain} of $f$ is $\dom f := \{ x\in X\mid f(x) < +\infty \}$. We say that $f$ is \emph{proper} if $\dom f \neq\emptyset$ and $f>-\infty$, and  \emph{sequentially $\tau$-inf-compact} (for some topology $\tau$ on $X$) if for every $\lambda \in \R$ and every sequence $(x_n)  \subset [f\le \lambda]:=\{ x \in X\mid f(x)\leq \lambda \}$ there exists a subsequence  $x_{n_k} \overset{\tau}{\to} x\in[f\le \lambda]$.  We write $ x \overset{f}{\to} x_0\in X$ to say that $x\to x_0$ with $f(x) \to  f(x_0)$.


Now, we consider a function $f:X\to \Rex$, which is finite at $x$. Then  the \emph{Fr\'echet} (or \emph{regular}) subdifferential  of $f$ at $x$ is defined by  
\begin{align*}
\hat{\partial} f(x):=& \Bigg\{ x^*\in X^\ast \mid \liminf\limits_{h \to 0} \frac{f(x+h) -f(x) - \langle x^*, h\rangle}{\| h\|} \geq 0  \Bigg\}.
\end{align*}
Since the space $X$ is  Fr\'echet smooth, that is, it has an equivalent norm that is $C^1$  for all $x \neq 0$ (see, e.g., \cite[Theorem 8.6]{MR2766381}),  we have that the  \emph{Fr\'echet} subdifferential coincide with the \emph{Viscosity Fr\'echet} Subdifferential,  that is to say,  $x^\ast \in \hat{\partial} f(x)$ if and only if  there are neighbourhood $U$ of $x$ and a $C^1$ function  $\phi:U \to \R$  such that $ \nabla \phi(x) = x^*$ and $f -  \phi$ attains its local minimum at $x$ (see, e.g., \cite{MR2144010,MR2986672}).

The \emph{limiting} (or \emph{basic}, or \emph{Mordukhovich} subdifferential)  and the \emph{singular} subdifferentials  are defined as (see, e.g., \cite{MR2191745,mordukhovich2018variational,MR2191744})
\begin{align*}
\partial f(x) &: =\Bigg\{ w^*\text{-}\lim x^*_n :
x_n^* \in \Asub f(x_n),  \text{ and } x_n \overset{f}{\to}x \Bigg\}
,\\ 
\partial^\infty f(x) &:=  \Bigg\{ w^*\text{-}\lim \lambda_n x^*_n : 
 x_n^* \in \Asub  f(x_n), \; x_n \overset{f}{\to}x \text{ and } \lambda_n \to 0^+
   \Bigg\},
\end{align*}
respectively. Finally, the \emph{Clarke-Rockafellar} subdifferential can be defined as (see, e.g., \cite[Theorem 3.57]{MR2191744}) 
\begin{align}\label{definition:clarke}
	\sub_C f(x):=\cco\left\{  	\partial f(x) + \partial^\infty f(x) \right\}.
	\end{align}

 If $|f(x)| =+\infty$, we set $\Asub f(x):=\partial f(x):=\sub^\infty f(x):=\sub_C f(x):=\emptyset$.

It is important to emphasize that when  $f$ is lower-semicontinuous (lsc),  proper  and  convex,   the Fr\'echet, the limiting and the Clarke-Rockafellar   subdifferentials coincide with the convex (or \emph{Fenchel}, \emph{Moreau-Rockafellar}) subdifferential given for $x\in  \dom f$ by 
$$
\sub f(x):=\{ x^* \in X^* : \langle x^* , y -x \rangle \leq f(y) -f(x), \forall y \in X  \}.
$$

Throughout the paper,  $({\T},\mathcal{A},\mu)$ is a complete $\sigma$-finite measure space.  A function $x : {\T} \to X\ (\text{or } X^*)$ is called \emph{simple} if there are  $k \in \N$, sets  $T_i \in \mathcal{A}$  and elements $x_i \in X$, $i=0,...,k$, such that $x(\cdot) = \sum_{i=0}^k x_i \1_{T_i}(\cdot)$, where $\1_{T_i}(t)=1$ if $t\in T_i$ and $\1_{T_i}(t)=0$ if $t\notin T_i$. A function $x: {\T}\to X(\text{or } X^*)$ is called \emph{measurable} if there exists a  countable family $x_n$ of simple functions such that $\lim_{n \to \infty} \| x (t) - x_n(t)\|=0$ $\mu$-almost everywhere  (ae, for short).     For $p \in [1,\infty)$ we denote by $\textnormal{L}^p({\T},X)$ and $\textnormal{L}^p({\T},X^*)$  the sets of all  (equivalence classes by the relation $f=g$ ae)  measurable  functions $f$ such that $\| f(\cdot)\|^p$ is integrable. As usual, the corresponding norm in these spaces is $\| f\|_p :=(\int_{\T} \|f(t)\|^p d\mu(t))^{1/p}$. For  an integrable function $x^*(\cdot)$ and a measurable set $A$, the symbols $\int_A x^*(t)d\mu(t)$ denote the  \emph{Bochner} integral of  $x^*$ over $A$  (see \cite[\S II. Integration]{MR0453964} and the  details therein).  The space $\textnormal{L}^\infty({\T},X)$ consists of all (equivalent classes with respect  to the the relation $f=g$ ae) measurable and essentially bounded functions  $x: {\T}\to X$. The associated norm on $\textnormal{L}^\infty({\T},X)$ is given by $\| x \|_\infty := \text{ess}\sup_{t\in {\T}} \| x(t)\|$. In a similar way, the space $\textnormal{L}^\infty({\T},X^*)$ is defined by the set of all measurable essentially bounded function  $x^*: {\T}\to X^*$. 

A sequence of functions $(\varphi_k)_{k\in\N} \subseteq  \textnormal{L}^1({\T},\R)$ is said to be uniformly integrable if $$\lim\limits_{ a\to \infty}\sup\limits_{k} \displaystyle\int\limits_{ \{| \varphi_k(t)| \geq a \}} | \varphi_k(t)|    d\mu(t)=0.$$

A function  $f:{\T}\times X \to \Rex$  will be called  a \emph{normal integrand} if it is  $\mathcal{A}\otimes\mathcal{B}(X)$-measurable (where $\mathcal{B}(X)$ is the  \emph{Borel} $\sigma$-algebra, i.e., the $\sigma$-algebra generated by all open sets of $X$) and  for every $t\in {\T}$,  $f_t:=f(t,\cdot)$ is lsc. For $\sub$ being any one of the subdifferentials above and any measurable function $x :{\T} \to X$, we denote $\sub f(t,x(t)) :=\sub f_t(x(t))$; i.e., the subdifferentials are taken with respect to the second variable.

  An  \emph{integral functional} on $\textnormal{L}^p(T,X)$ (with  $p\in [1,+\infty]$) is an extended-real-valued functional $\IntfLp{f}^{\mu,p}:\textnormal{L}^p(T,X) \to \Rex$  of the form 
\[ \displaystyle x\to \IntfLp{f}^{\mu,p}(x(\cdot)):=\int\limits_T f(t,x(t)) d\mu(t):=\int\limits_T f(t,x(t))^{+}d\mu(t)+ \int\limits_Tf(t,x(t))^{-} d\mu(t),\] 
where   $ \alpha^{+} := \max\{ \alpha,0\} $ and $\alpha^{-}:=\min \{ \alpha,0\}$; we simply write  $\IntfLp{f}$, when there is no ambiguity.   

The main concern of this paper  is the study of the subdifferential of the following particular class of integral functionals (also called  continuous sum or integral sum), $\Intf{f}: X\to \Rex$, given by
$$
\Intf{f}^\mu(x):= \int_{\T} f(t,x) d\mu(t);
$$ 
we simply write  $\Intf{f}$, when there is no confusion. It is worth mentioning that $\Intf{f}$ can be understood as  the  integral functional  $\IntfLp{f}$ restricted to the constant measurable functions.

The subdifferential theory of functions $\IntfLp{f}$ and $\Intf{f}$ goes back to Ioffe-Tikhomirov		 \cite{0036-0279-23-6-R02} and  Rockafellar \cite{MR0390870} for convex normal integrands. Posteriorly, these functionals have been considered by several authors; for example, Rockafellar \cite{MR0236689,MR0310612}, Ioffe-Levin \cite{MR0372610}, Levin \cite{0036-0279-30-2-R03}, Castaing-Valadier \cite{MR0467310},  Ioffe \cite{MR2291564}, Lopez-Thibault \cite{MR2444461},  Borwein-Yao \cite{MR3235316}, and Ginner-Penot \cite{MR3767752}, among others.

Now, let us recall the concept of a \emph{graph measurable}  multifunction. We recall that a Hausdorff topological space $S$ is  a Suslin space  provided that there
exists a Polish space $P$ (complete, metrizable and separable) and a
continuous surjection from $P$ to $S$ (see \cite%
{MR0027138,MR0467310,MR0426084}). Typical examples of suslin spaces are a sepable Banach space (with the norm topology) and its dual (with the weak$^*$-topology).

Consider a Suslin space $S$. A multifunction $M: {\T} \tto S$ is said to be \emph{graph measurable} (or simply measurable) if its graph, $\grafo M :=\{(t,s)\in {\T}\times S : s\in M(t)  \}$, belongs to $ \mathcal{A} \otimes \mathcal{B}(S)$ (see, e.g.,  \cite{MR2458436,MR0467310,MR1485775,MR0499053,MR0486391,MR577971} for more details). 

The next proposition corresponds to the  \emph{Measurable Selection Theorem} for graph measurable multifunction with values in Suslin spaces.
\begin{proposition}\cite[Theorem III.22]{MR0467310}\label{measurableselectiontheorem}
	Consider a graph measurable multifunction $M: T\tto S$ with non-empty values. Then the exists a measurable function $m: T \to S$ such that $m(t) \in M(t)$ for all $t\in T$.
	\end{proposition}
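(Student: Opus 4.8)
The plan is to give a sketch of this classical von Neumann--Aumann type selection theorem along the standard two stages: first reduce the Suslin target $S$ to a Polish space (in fact to the Baire space $\N^{\N}$), and then produce the selection from the \emph{measurable projection theorem} by a successive-approximation (Lusin scheme) construction.

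\textbf{Step 1 (reduction to a Polish, then Baire, target).} Since $S$ is Suslin, fix a Polish space $P$ and a continuous surjection $\varphi\colon P\to S$; composing with a continuous surjection $\N^{\N}\twoheadrightarrow P$ we may further take $P=\N^{\N}$, whose topology is generated by the clopen cylinders $[\sigma]$, $\sigma\in\N^{<\N}$, satisfying $[\sigma]=\bigsqcup_{n\in\N}[\sigma n]$ (concatenation) and $\bigcap_{k}[\alpha|_{k}]=\{\alpha\}$. Set $N(t):=\varphi^{-1}(M(t))$. The map $\Phi\colon(t,p)\mapsto(t,\varphi(p))$ from $T\times P$ to $T\times S$ is measurable for the product $\sigma$-algebras (as $\varphi$ is continuous, hence Borel), and $\grafo N=\Phi^{-1}(\grafo M)$, so $\grafo N\in\mathcal{A}\otimes\mathcal{B}(P)$; moreover $N$ has non-empty values since $\varphi$ is onto. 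Since a measurable selection $n$ of $N$ gives the measurable selection $m:=\varphi\circ n$ of $M$, it suffices to select from $N$.

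\textbf{Step 2 (measurable projection and the recursive partition).} Using $\sigma$-finiteness of $\mu$ we split $T$ into countably many sets of finite measure and argue on each, so assume $\mu(T)<\infty$. The essential input is that for every $G\in\mathcal{A}\otimes\mathcal{B}(P)$ the projection $\mathrm{pr}_{T}(G)$ lies in $\mathcal{A}$: indeed $\mathrm{pr}_{T}(G)$ is an $\mathcal{A}$-analytic set, analytic sets are universally measurable (Choquet's capacitability theorem), and $\mathcal{A}$ is $\mu$-complete. This is exactly where completeness of $(T,\mathcal{A},\mu)$ enters, and it is the one genuinely non-elementary ingredient; the rest is a recursion. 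Define $A_{\sigma}\in\mathcal{A}$ for $\sigma\in\N^{<\N}$ by $A_{\emptyset}:=T$ and
\[
A_{\sigma n}:=\Big(A_{\sigma}\cap\mathrm{pr}_{T}\big(\grafo N\cap(T\times[\sigma n])\big)\Big)\setminus\bigcup_{m<n}A_{\sigma m}.
\]
By the projection theorem each $A_{\sigma}\in\mathcal{A}$; inductively $A_{\sigma}\subseteq\{t:N(t)\cap[\sigma]\neq\emptyset\}$, so for fixed $\sigma$ the sets $(A_{\sigma n})_{n}$ are pairwise disjoint and cover $A_{\sigma}$ (because $N(t)\neq\emptyset$ and $[\sigma]=\bigsqcup_{n}[\sigma n]$). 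Hence each $t\in T$ determines a unique $\alpha(t)\in\N^{\N}$ with $t\in A_{\alpha(t)|_{k}}$ for all $k$, and since $\alpha^{-1}([\tau])=A_{\tau}\in\mathcal{A}$ for every finite $\tau$, the map $\alpha\colon T\to\N^{\N}$ is measurable.

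\textbf{Step 3 (the limit falls inside the values) and conclusion.} For each $t$ and $k$ one has $N(t)\cap[\alpha(t)|_{k}]\neq\emptyset$, so any $p_{k}(t)$ chosen there converges to $\alpha(t)$; if the values $N(t)$ were closed this already gives $\alpha(t)\in N(t)$. In the general (merely graph-measurable) case this step must be strengthened: one runs the same recursion along a Lusin scheme representing the analytic set $\grafo N$ itself, so the committed pieces shrink to points of $\grafo N$ rather than of $P$; equivalently, one applies the Jankov--von Neumann uniformization theorem to $\grafo N\subseteq T\times P$ to obtain a uniformizing map measurable for the $\sigma$-algebra generated by the analytic sets, hence $\mathcal{A}$-measurable since $\mathcal{A}$ is $\mu$-complete. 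Either way one gets a measurable $n\colon T\to P$ with $n(t)\in N(t)$ for every $t$, and then $m:=\varphi\circ n$ is the required everywhere-defined measurable selection of $M$ (no null-set correction is needed because the values are non-empty for all $t$, i.e. $\mathrm{pr}_{T}(\grafo N)=T$). The main obstacle is thus entirely concentrated in the measurable projection / uniformization of analytic sets used in Steps 2--3; the reduction to $\N^{\N}$ and the recursive partition are routine bookkeeping.
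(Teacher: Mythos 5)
The paper offers no proof of this proposition: it is quoted verbatim from Castaing--Valadier \cite[Theorem III.22]{MR0467310}, so there is nothing internal to compare against. Your sketch is a correct outline of the standard von Neumann--Aumann argument that proves that cited theorem: reduction of the Suslin target to $\N^{\N}$, the measurable projection theorem for $\mathcal{A}$-analytic sets (where completeness of $(T,\mathcal{A},\mu)$ is genuinely used), and a Lusin-scheme recursion; you also correctly identify the one trap --- that the naive scheme only lands in $\overline{N(t)}$ when the values are not closed --- and the standard repair of running the scheme on a Suslin representation of $\grafo N$ itself. The only imprecision is your alternative appeal to Jankov--von Neumann uniformization applied to $\grafo N\subseteq T\times P$: that theorem is stated for products of Polish (or standard Borel) spaces, whereas $T$ here is an abstract measure space, so invoking it literally is circular --- the abstract version is essentially the statement being proved; your first route (the Lusin scheme on the $\mathcal{A}$-Suslin representation of the graph) is the one that actually closes the argument.
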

 
 For a multifunction $M:{\T}\tto X^*$ and a measurable set $A \in \mathcal{A}$, we define \emph{the Bochner integral of $M$ over $A$} by   $$\int\limits_A M(t) d\mu(t) :=\Bigg\{  \int\limits_A x^*(t) d\mu(t) : x^* \in \textnormal{L}^1({\T},X^*) \textnormal{ and } x^*(t) \in M(t) \text{ ae}       \Bigg\}. $$
It is worth recalling that the original definition of integral of set-valued mappings is due to R. J. Aumann, given for multifunctions defined on closed intervals in $\R$ (see, for example, \cite{MR0185073}). 

Unless stated otherwise,  in the rest of this article we  assume that $f$ is an integrand from ${\T}\times X$  to $ [0,+\infty]$,  $X$ is  a separable Asplund space and its  norm is $C^1$ away from the origin, we refer to  \cite{MR2766381} for more details about the theory of Asplund spaces. Although the assumption about the range of the values of the integrand appears less general, many of the results in the literature can be obtained in our setting by modifying appropriately the integrand. We will talk more in depth about these techniques in    \cref{theoremsubdiferential,theoremsubdiferentialinfinity}. It is important to recall that in our  framework the integral functional $\IntfLp{f} : (\textnormal{L}^p({\T},X),\| \cdot \|_p) \to \Rex$ is lsc (see, e.g., \cite[Lemma 10]{MR3767752}).

The next proposition corresponds to an extension of the well-known result of Rockafellar concerning the interchange between the infimum and  integral in finite-dimensional spaces using the concept of a decomposable space (see, e.g., \cite{MR0236689,MR0310612,MR0390870,MR1491362}). Lately, this  theorem  was extended to separable infinite-dimensional spaces \cite[Theorem VII-7]{MR0467310}. We also refer to  \cite{INTECONV,MR3767752,MR2559616,MR0507504} for other versions of this result. 
\begin{proposition}\label{dualitycastaing} Let  $h:{\T}\times X\rightarrow \overline{\mathbb{R}}$ be a
	normal integrand such that $\IntfLp{h}(u_{0})<\infty $ for some $u_{0}\in
	\textnormal{L}^p({\T},X)$. Then 
	\begin{equation*}
	\inf\limits_{v\in \textnormal{L}^p({\T},X)} \int\limits_T h(t,v(t)) d\mu(t) =\int\limits_T \inf\limits_{x\in X} h(t,x)d\mu(t).
	\end{equation*}
\end{proposition}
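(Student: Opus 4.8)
The plan is to prove the two inequalities separately. Throughout I write $g(t):=h(t,u_0(t))$; the hypothesis $\IntfLp{h}(u_{0})<\infty$ means exactly that $g\in\textnormal{L}^1(\T,\R)$. Put also $m(t):=\inf_{x\in X}h(t,x)$. This marginal function is $\mathcal{A}$-measurable: for each $\lambda\in\R$ the set $\{t\in\T:m(t)<\lambda\}$ is the canonical projection onto $\T$ of $\{(t,x)\in\T\times X:h(t,x)<\lambda\}\in\mathcal{A}\otimes\mathcal{B}(X)$, which is $\mathcal{A}$-measurable by the projection theorem (using that $X$ is Suslin and $\mu$ is complete; see, e.g., \cite{MR0467310}). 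Since $m\le g$ pointwise, $\int_\T m^{+}\,d\mu\le\int_\T g^{+}\,d\mu<\infty$, so $\int_\T m\,d\mu$ is well defined in $[-\infty,+\infty)$. The inequality ``$\ge$'' is then immediate: for every $v\in\textnormal{L}^p(\T,X)$ we have $h(t,v(t))\ge m(t)$ a.e., hence $\IntfLp{h}(v)\ge\int_\T m\,d\mu$.

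For the reverse inequality I would fix $\varepsilon>0$, a measurable function $\phi:\T\to(0,+\infty)$ with $\int_\T\phi\,d\mu\le\varepsilon$ (such a $\phi$ exists because $\mu$ is $\sigma$-finite), and a non-decreasing sequence $\T_j\uparrow\T$ with $\mu(\T_j)<\infty$. For each $N\in\N$ introduce the truncation $m_N:=\max\{m,-N\}$, which is measurable with $-N\le m_N\le g^{+}$. The multifunction $M_N(t):=\{x\in X:h(t,x)<m_N(t)+\phi(t)\}$ has non-empty values (since $\inf_x h(t,x)=m(t)\le m_N(t)<m_N(t)+\phi(t)$) and graph-measurable graph (since $h$ is a normal integrand and $m_N,\phi$ are measurable); by \cref{measurableselectiontheorem} it admits a measurable selection $w_N:\T\to X$. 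Fixing $j$, the sets $S_k:=\{t\in\T_j:\|w_N(t)\|\le k\}$ increase to $\T_j$, and $w_N\1_{S_k}\in\textnormal{L}^p(\T,X)$ (it is bounded and supported on a set of finite measure; the case $p=\infty$ is trivial), so by decomposability of $\textnormal{L}^p(\T,X)$ the function $v_k:=w_N\1_{S_k}+u_0\1_{\T\setminus S_k}$ belongs to $\textnormal{L}^p(\T,X)$.

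Using $h(t,w_N(t))<m_N(t)+\phi(t)$ on $S_k$ and $h(t,v_k(t))=g(t)$ on $\T\setminus S_k$, one obtains
\begin{align*}
\IntfLp{h}(v_k)&\le\int_{S_k}m_N\,d\mu+\int_{S_k}\phi\,d\mu+\int_{\T\setminus S_k}g\,d\mu\\
&\le\int_{\T_j}m_N\,d\mu+N\,\mu(\T_j\setminus S_k)+\varepsilon+\int_{\T\setminus S_k}g\,d\mu .
\end{align*}
Letting $k\to\infty$ (so $\mu(\T_j\setminus S_k)\to0$ and, since $g\in\textnormal{L}^1$, $\int_{\T\setminus S_k}g\,d\mu\to\int_{\T\setminus\T_j}g\,d\mu$) yields $\inf_v\IntfLp{h}(v)\le\int_{\T_j}m_N\,d\mu+\varepsilon+\int_{\T\setminus\T_j}g\,d\mu$. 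Letting $j\to\infty$, using $\int_\T m_N^{+}\le\int_\T g^{+}<\infty$ (so no ``$\infty-\infty$'' occurs) and again $g\in\textnormal{L}^1$, gives $\inf_v\IntfLp{h}(v)\le\int_\T m_N\,d\mu+\varepsilon$. Finally $m_N\downarrow m$ with $m_N\le g^{+}\in\textnormal{L}^1$, so monotone convergence gives $\int_\T m_N\,d\mu\downarrow\int_\T m\,d\mu$, whence $\inf_v\IntfLp{h}(v)\le\int_\T m\,d\mu+\varepsilon$; letting $\varepsilon\to0$ and combining with ``$\ge$'' completes the proof.

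I expect the main difficulty to be the step turning the measurable ``almost-minimizing'' selection $w_N$ into an element of $\textnormal{L}^p(\T,X)$ while keeping control of the integral: this is exactly where $\sigma$-finiteness, the finiteness of the $\textnormal{L}^p$-norm of bounded functions on finite-measure sets, the decomposability of $\textnormal{L}^p(\T,X)$, and the reference point $u_0$ (with $\IntfLp{h}(u_0)<\infty$, serving as the ``outside'' value) are all needed. Two smaller technical points are the measurability of the marginal function $m$ (the projection theorem plus completeness of $\mu$) and the double truncation ($m_N$ to keep $M_N(t)$ non-empty where $m(t)=-\infty$, and $S_k$ to gain membership in $\textnormal{L}^p$). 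In substance this is the classical interchange argument of Rockafellar, in the separable-space form of Castaing--Valadier \cite[Theorem VII-7]{MR0467310}.
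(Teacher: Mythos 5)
The paper itself gives no proof of \cref{dualitycastaing}; it is quoted from Castaing--Valadier \cite[Theorem VII-7]{MR0467310}, so there is nothing internal to compare your argument against. Your proof is a correct, self-contained rendition of the classical interchange argument, and its skeleton (measurability of the marginal $m$ via the projection theorem and completeness of $\mu$, an almost-minimizing measurable selection from \cref{measurableselectiontheorem}, truncation at level $-N$ and at norm $k$, patching with $u_0$ outside a set of finite measure, then the three limit passages $k\to\infty$, $j\to\infty$, $N\to\infty$) is exactly what the cited proof requires; the decomposability of $\textnormal{L}^p({\T},X)$ and the reference point $u_0$ enter precisely where you say they do. Two small points deserve a remark, neither of which is a genuine gap. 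First, with the paper's conventions ($\IntfLp{h}(u_0)=\int_T h(t,u_0(t))^{+}d\mu+\int_T h(t,u_0(t))^{-}d\mu$ and $\infty+(-\infty)=+\infty$), the hypothesis $\IntfLp{h}(u_0)<\infty$ only says $g^{+}\in\textnormal{L}^1$, not $g\in\textnormal{L}^1$; but if $\int_T g^{-}d\mu=-\infty$ then $\int_T m\,d\mu=-\infty$ (since $m\le g$ and $\int_T m^{+}d\mu<\infty$) and $\inf_v\IntfLp{h}(v)\le\IntfLp{h}(u_0)=-\infty$, so the identity is trivial in that case and the reduction to $g\in\textnormal{L}^1$ that you implicitly make is legitimate. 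Second, $M_N(t)$ is empty on $\{t:m(t)=+\infty\}$, where $m_N(t)+\phi(t)=+\infty$ and no $x$ satisfies the strict inequality; since $m\le g<+\infty$ a.e.\ this set is null, and by completeness of $\mu$ you may discard it (defining $w_N:=u_0$ there) before invoking the selection theorem. With those two sentences added, the proof is complete.
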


\section{Sequential formulae for the subdifferential  of integral functions\label{formulae}}\label{section:Robusted}

In order to deal with an arbitrary complete $\sigma$-finite measure space $({\T},\mathcal{A},\mu)$, we adapt here the notion of \emph{robust local minima} or \emph{decoupled infima} (see, for example, \cite{MR2191744,MR2144010,MR2986672,MR3033176}) to the case of  integral functionals. 
\begin{definition}\label{ROBUSTEDLOCALMINIMUM}
Let $({\T},\mathcal{A},\mu)$ be a finite measure space.	Consider a function $f:{\T}\times X \to \Rex$ and $p\in [1,+\infty)$. We define the  $p$-stabilized infimum of  $\Intf{f}$ on $B \subseteq X$  by
	$$\wedge_{p,B} \Intf{f} :=\sup\limits_{\varepsilon > 0}\inf\Bigg\{ \displaystyle \int\limits_T f(t,x(t)) d\mu(t) \mid \begin{array}{c}
	x(\cdot) \in \textnormal{L}^p({\T},X), \; \; y \in B\\
	 \text{ and } \displaystyle\int\limits_T \| x(t) -y\|^p d\mu(t)  \leq  \varepsilon 
	\end{array}    \Bigg\}.$$
	The infimum of $\Intf{f}$ on $B$ is called $p$-robust if $\wedge_{p,B} \Intf{f} =\inf_B \Intf{f}$ and these quantities are finite; then a minimizer of $\Intf{f}$ on $B$ will be called a $p$-robust  minimizer on $B$. A point $x$ will be called a $p$-robust local minimizer of $\Intf{f}$ provided the existence of some $\eta >0$ such that $x$ is a $p$-robust minimizer on $\mathbb{B}(x,\eta)$.
\end{definition}
The above definition is given only for finite measures, due to the fact that whenever $\mu(T)=+\infty$, we have that $\int\limits_T \| x(t) -y\|^p d\mu(t) =\infty$ for all $(x,y) \in \textnormal{L}^p(T,X)\times X\backslash \{ 0\}$. However, in many of the results, when we work with a general $\sigma$-finite measure, we can modify the measure space to work with an equivalent finite measure.

It is worth mentioning  that one can easily prove  (using \emph{H\"{o}lder's inequality} \cite[Corollary 2.11.5]{MR2267655})  that   a $p$-robust minimizer of $\Intf{f}$ is also an $r$-robust minimizer for every $r \geq p$.

 The following result gives sufficient conditions for $p$-robustness. Particularly, in the finite-dimensional setting, each local minimum is a $p$-robust minimum.
\begin{proposition}
	\label{Prop:SuficientCondition}\hspace{-0.2cm} Let $({\T},\mathcal{A},\mu)$ be a finite measure space. Consider $p \in [1,+\infty)$,  $q:= p/(p-1)$, and $B\subseteq X$
	such that $\dom \Intf{f} \cap B \neq \emptyset$. Suppose one of the following
	conditions is satisfied: 
	
	\begin{enumerate}[label={(\alph*)},ref={(\alph*)}] 
		
		\item \label{Prop:SuficientConditiona} For almost every $t\in {T}$, $%
		f(t,\cdot)$ is $\tau$-lsc, $B$ is $\tau$-closed and there exists $A\in 
		\mathcal{A}$ with $\mu(A) >0$ such that for all $t \in A$, $f(t,\cdot)$ is
		sequentially $\tau$-inf-compact, with $\tau$ being some topology, which is coarser than the
		norm topology (i.e. $\tau\subseteq \tau_{\|\cdot\|}$). 
		\item For almost every $t\in {T}$, $%
		f(t,\cdot)$ is $\tau$-lsc, and $B$ is sequentially $\tau$-inf-compact.
		\item \label{Prop:SuficientConditionb} For almost every $t\in {T}$, $f_t$ is
		Lipschitz on $X$ with some $q$-integrable constant. 
	\end{enumerate}
	
	Then 
	\begin{equation*}
		\wedge_{p,B} \Intf{f} =\inf_B \Intf{f}.
	\end{equation*}
\end{proposition}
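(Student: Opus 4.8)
The plan is to prove each of the three conditions suffices separately, reducing in each case to the interchange formula in \cref{dualitycastaing}. The inequality $\wedge_{p,B}\Intf{f}\le\inf_B\Intf{f}$ is immediate: for any $y\in\dom\Intf{f}\cap B$ the constant function $x(\cdot)\equiv y$ belongs to $\textnormal{L}^p(T,X)$ (since $\mu$ is finite) and satisfies $\int_T\|x(t)-y\|^p\,d\mu=0\le\varepsilon$, so the inner infimum is $\le\Intf{f}(y)$ for every $\varepsilon>0$; taking the supremum over $\varepsilon$ and then the infimum over $y\in B$ gives the bound. Thus the whole content is the reverse inequality $\wedge_{p,B}\Intf{f}\ge\inf_B\Intf{f}$.

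For the reverse inequality, fix $\varepsilon>0$ and consider a pair $(x(\cdot),y)$ with $y\in B$ and $\int_T\|x(t)-y\|^p\,d\mu\le\varepsilon$; I want to bound $\int_T f(t,x(t))\,d\mu$ from below by $\inf_B\Intf{f}$ up to an error vanishing as $\varepsilon\to0$. The natural device is to introduce the augmented normal integrand $h(t,z):=f(t,z)+\delta_B(y)\ ...$ — more precisely, to work on the product space: consider $g_\lambda(t,(z,y)):=f(t,z)+\delta_B(y)+\lambda\|z-y\|^p$ as a normal integrand in the variable $(z,y)\in X\times X$, apply \cref{dualitycastaing} to get
\begin{equation*}
\inf_{(v,w)\in\textnormal{L}^p(T,X\times X)}\int_T g_\lambda(t,(v(t),w(t)))\,d\mu(t)=\int_T\inf_{(z,y)\in X\times X}g_\lambda(t,(z,y))\,d\mu(t),
\end{equation*}
and then let $\lambda\to\infty$ on both sides. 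On the left, a penalization/Moreau-envelope argument forces $v$ and $w$ to coincide in the limit and $w$ to take values in $B$; on the right one recognizes $\inf_z\big(f(t,z)+\lambda\inf_{y\in B}\|z-y\|^p\big)$, which increases to $\inf_{z\in B}f(t,z)$ pointwise by monotone convergence, giving $\int_T\inf_{z\in B}f(t,z)\,d\mu$. Comparing with the definition of $\wedge_{p,B}$, which is exactly a supremum over $\varepsilon$ of infima of the left-hand type, yields $\wedge_{p,B}\Intf{f}\ge\int_T\inf_{z\in B}f_t\,d\mu$; but since a constant selection in $B$ shows $\inf_B\Intf{f}\le\int_T\inf_{z\in B}f_t\,d\mu$ need not hold with equality in general, I will instead argue directly that under each of \ref{Prop:SuficientConditiona}--\ref{Prop:SuficientConditionb} the minimizing selections can be glued into a single constant, which is where the three hypotheses enter.

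The role of the three conditions is precisely to control the measurable selections produced by \cref{measurableselectiontheorem} and pass from a $t$-dependent near-minimizer $x_\varepsilon(\cdot)$ back to a constant point of $B$. Under \ref{Prop:SuficientConditionb} ($f_t$ Lipschitz with $q$-integrable constant $k(\cdot)$), Hölder's inequality gives $|f(t,x(t))-f(t,y)|\le k(t)\|x(t)-y\|$ and $\int_T k(t)\|x(t)-y\|\,d\mu\le\|k\|_q\,(\int_T\|x(t)-y\|^p\,d\mu)^{1/p}\le\|k\|_q\,\varepsilon^{1/p}$, so $\int_T f(t,x(t))\,d\mu\ge\Intf{f}(y)-\|k\|_q\varepsilon^{1/p}\ge\inf_B\Intf{f}-\|k\|_q\varepsilon^{1/p}$, and letting $\varepsilon\to0$ finishes this case cleanly. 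Under \ref{Prop:SuficientConditiona} (resp.\ the middle condition), the $\tau$-inf-compactness on a set $A$ of positive measure (resp.\ on $B$) together with $\tau$-lower semicontinuity lets me extract, from a near-minimizing sequence $x_n(\cdot)$ with $\int_T\|x_n(t)-y_n\|^p\,d\mu\le1/n$, a subsequence for which $x_n(t)$ $\tau$-converges for $t$ in $A$ to some limit; since $\int_A\|x_n(t)-y_n\|^p\,d\mu\to0$ forces (along a further subsequence) $\|x_n(t)-y_n\|\to0$ a.e.\ on $A$, the points $y_n$ themselves $\tau$-converge along $A$ to a point $\bar y$, which lies in the $\tau$-closed set $B$; Fatou's lemma applied with the $\tau$-lsc integrands then yields $\liminf_n\int_T f(t,x_n(t))\,d\mu\ge\int_T f(t,\bar y)\,d\mu=\Intf{f}(\bar y)\ge\inf_B\Intf{f}$, giving $\wedge_{p,B}\Intf{f}\ge\inf_B\Intf{f}$.

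The main obstacle is the last step in the non-Lipschitz cases: extracting from the $\tau$-convergence on $A$ a single limit point $\bar y\in B$ that simultaneously (i) is the $\tau$-limit of the auxiliary points $y_n$ and (ii) makes the Fatou bound on all of $T$ (not just on $A$) go through. Care is needed because $\tau$ is only coarser than the norm topology, so one cannot freely pass from norm-smallness of $\int_A\|x_n-y_n\|^p$ to $\tau$-convergence without first extracting an a.e.-norm-convergent subsequence on $A$; once that is done, the constancy of $y_n$ in the spatial variable is what upgrades a.e.\ $\tau$-convergence of $x_n(\cdot)|_A$ to genuine $\tau$-convergence of the sequence $(y_n)$ in $X$, and the lsc of $f_t$ for every $t$ then controls the integrand off $A$ as well. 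The rest is routine bookkeeping with $\varepsilon$ and subsequences.
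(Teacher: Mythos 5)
Your proof is correct and follows essentially the same route as the paper: H\"older's inequality for condition (c), and extraction of a $\tau$-convergent subsequence combined with Fatou's lemma and lower semicontinuity for (a) and (b). The only genuine divergence is cosmetic: the paper handles the middle condition by reducing it to (a) (adjoining an atom $\omega_0$ with integrand $\delta_B$), whereas you use the sequential $\tau$-inf-compactness of $B$ directly to extract $y_{n_k}\overset{\tau}{\to}\bar y\in B$; both work and yours is arguably more direct. One step you gloss over deserves to be made explicit, since it is the hinge of case (a): to invoke sequential $\tau$-inf-compactness of $f(t_0,\cdot)$ you must first exhibit some $t_0\in A$ at which the sequence $x_n(t_0)$ lies in a sublevel set of $f(t_0,\cdot)$ along a subsequence. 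The paper obtains this from $f\ge 0$ and Fatou's lemma, namely $\int_T\liminf_n f(t,x_n(t))\,d\mu\le\liminf_n\int_T f(t,x_n(t))\,d\mu\le\inf_B\Intf{f}<+\infty$, so that $\liminf_n f(t,x_n(t))<+\infty$ for a.e.\ $t$ and in particular for some $t_0\in A$ since $\mu(A)>0$. Relatedly, one cannot extract a single subsequence along which $x_n(t)$ $\tau$-converges for all $t\in A$ at once; the clean order is to extract at the single point $t_0$, transfer the $\tau$-limit to the constants $y_n$ via $\|x_n(t_0)-y_n\|\to 0$ (here $\tau\subseteq\tau_{\|\cdot\|}$ is used), and only then propagate back to $x_n(t)$ for every $t$ before applying Fatou on all of $T$ --- your write-up has these ingredients but in a slightly muddled order. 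Finally, you were right to abandon the penalization route through \cref{dualitycastaing}: it only yields $\wedge_{p,B}\Intf{f}\ge\int_T\inf_{z\in B}f_t\,d\mu$, which sits on the wrong side of $\inf_B\Intf{f}$.
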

\begin{proof}
	See \cref{proof_of_Prop:SuficientCondition}.
	\end{proof}

Now we present a fuzzy necessary condition for the existence of  a  $p$-robust  minimum in terms of the subdifferential of the data function $f$. The proof of this result is based in the application of \emph{Borwein-Preiss variational principle}  to  some appropriate function defined in $X \times \textnormal{L}^p(T,X)$.
\begin{theorem}
	\label{aproximatecalculsrules} Let  $p,q \in (1,+\infty)$ with $1/p + 1/q=1$. Assume that the
	measure $\mu$ is finite and that $x_0 \in X$ is a $p$-robust local minimizer of $%
	\Intf{f}$. Then there are sequences $y_n \in X$, $x_n \in \textnormal{L}^p({T}%
	,X)$, and $x_n^* \in \textnormal{L}^q({T},X^*)$ such
	that:  
		\begin{enumerate}[label={(\alph*)},ref={(\alph*)}]
			\item $x_n^*(t) \in \Asub f(t,x_n(t))$ ae,
			\item $\| x_0 - y_n\| \to 0$, $ \| x_0 - x_n(\cdot)\|_p  \to 0$,
			\item $\| x_n^*(\cdot)\|_q  \| x_n(\cdot)- y_n\|_p \to 0$,
			\item $\| \displaystyle \int\limits_T x_n^*(t) d\mu(t)\| \to 0$,
			\item $ \displaystyle \int\limits_T | f(t,x_n(t)) - f(t,x_0)|  d\mu(t)\to 0$. %
		\end{enumerate}
\end{theorem}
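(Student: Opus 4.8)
The plan is to apply the Borwein--Preiss smooth variational principle in the product Banach space $Z:=X\times\textnormal{L}^p(T,X)$, equipped with the norm $\|(y,v(\cdot))\|_Z:=(\|y\|^p+\|v(\cdot)\|_p^p)^{1/p}$, and then to read off pointwise subdifferential information from the resulting approximate minimality by freezing one coordinate at a time and invoking Rockafellar's interchange formula \cref{dualitycastaing}. The point of using two coordinates is that the function variable $x(\cdot)$ will carry the ``continuous sum'' that is being perturbed, while the auxiliary point $y$ will carry the $p$-stabilized (robustness) constraint. The structural fact I would use is that, since $p>1$ and the norm of $X$ is $C^1$ away from the origin, the $p$-homogeneous functional $v(\cdot)\mapsto\int_T\|v(t)\|^p\,d\mu(t)$ on $\textnormal{L}^p(T,X)$ is $C^1$, hence so is $z\mapsto\|z\|_Z^p$; consequently the Borwein--Preiss perturbation will be of class $C^1$. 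Note also $Z^\ast=X^\ast\times\textnormal{L}^q(T,X^\ast)$.

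First I would unfold the hypothesis: there is $\eta>0$ with $\wedge_{p,\mathbb{B}(x_0,\eta)}\Intf{f}=\inf_{\mathbb{B}(x_0,\eta)}\Intf{f}=\Intf{f}(x_0)\in[0,+\infty)$, which by definition of the $p$-stabilized infimum means that for every $\delta>0$ there is $\varepsilon(\delta)>0$ such that $\IntfLp{f}(x(\cdot))\ge\Intf{f}(x_0)-\delta$ whenever $x(\cdot)\in\textnormal{L}^p(T,X)$, $y\in\mathbb{B}(x_0,\eta)$ and $\int_T\|x(t)-y\|^p\,d\mu(t)\le\varepsilon(\delta)$. Then, with $\delta_n:=n^{-2}$, $\lambda_n:=n^{-1}$, $\varepsilon_n:=\varepsilon(\delta_n)$ and $M_n:=(\Intf{f}(x_0)+1)/\varepsilon_n$, I would consider the proper, nonnegative, lsc function on $Z$ (recall $\IntfLp{f}$ is lsc on $\textnormal{L}^p(T,X)$)
\[
\Phi_n(y,x(\cdot)):=\IntfLp{f}(x(\cdot))+\delta_{\mathbb{B}(x_0,\eta)}(y)+M_n\int_T\|x(t)-y\|^p\,d\mu(t),
\]
and verify, by splitting into the cases $\int_T\|x(t)-y\|^p\,d\mu\le\varepsilon_n$ (where robustness applies) and $>\varepsilon_n$ (where the penalty alone exceeds $M_n\varepsilon_n=\Intf{f}(x_0)+1$), that $\inf_Z\Phi_n\ge\Intf{f}(x_0)-\delta_n$; since $\Phi_n(x_0,x_0\1_T)=\Intf{f}(x_0)$, the constant pair $z_0:=(x_0,x_0\1_T)$ is a $\delta_n$-minimizer. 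Applying Borwein--Preiss to $\Phi_n$ at $z_0$ with parameters $\delta_n,\lambda_n$ and exponent $p$ then produces $z_n=(y_n,x_n(\cdot))$ with $\|z_n-z_0\|_Z\le\lambda_n$ and a $C^1$ function $\psi_n(z)=\tfrac{\delta_n}{\lambda_n^p}\sum_i\mu_i^{(n)}\|z-z_i^{(n)}\|_Z^p$ (with $\mu_i^{(n)}\ge0$, $\sum_i\mu_i^{(n)}=1$, the $z_i^{(n)}$ within $\lambda_n$ of $z_0$) such that $\Phi_n(z_n)+\psi_n(z_n)\le\Phi_n(z_0)=\Intf{f}(x_0)$ and $z_n$ minimizes $\Phi_n+\psi_n$ on $Z$. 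From this descent inequality (using $\IntfLp{f},\psi_n\ge0$) I would deduce $M_n\int_T\|x_n(t)-y_n\|^p\,d\mu\le\Intf{f}(x_0)<M_n\varepsilon_n$, so the robustness constraint holds at $(y_n,x_n(\cdot))$, hence $\IntfLp{f}(x_n(\cdot))\ge\Intf{f}(x_0)-\delta_n$, and, feeding this back, $M_n\|x_n(\cdot)-y_n\1_T\|_p^p+\psi_n(z_n)\le\delta_n$ together with $\Intf{f}(x_0)-\delta_n\le\IntfLp{f}(x_n(\cdot))\le\Intf{f}(x_0)$. Also $\|y_n-x_0\|\le\lambda_n<\eta$ for large $n$, so the indicator term is locally inactive near $z_n$.

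Next I would extract the pointwise statements. Writing $z_i^{(n)}=(y_i^{(n)},u_i^{(n)}(\cdot))$, the perturbation splits as a function of $y$ alone plus $\tfrac{\delta_n}{\lambda_n^p}\int_T\big(\sum_i\mu_i^{(n)}\|x(t)-u_i^{(n)}(t)\|^p\big)\,d\mu(t)$, so freezing the first coordinate at $y_n$ makes $x_n(\cdot)$ a \emph{global} minimizer over $\textnormal{L}^p(T,X)$ of $\IntfLp{g_n}$, where $g_n(t,x):=f(t,x)+\phi_n(t,x)$ with $\phi_n(t,x):=M_n\|x-y_n\|^p+\tfrac{\delta_n}{\lambda_n^p}\sum_i\mu_i^{(n)}\|x-u_i^{(n)}(t)\|^p$ a nonnegative normal integrand whose $\phi_n(t,\cdot)$ is $C^1$ for a.e.\ $t$ (the series being finite a.e.\ by integrability of $t\mapsto\sum_i\mu_i^{(n)}\|u_i^{(n)}(t)\|^p$). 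Since $\IntfLp{g_n}(x_n(\cdot))<+\infty$, \cref{dualitycastaing} gives $\int_T g_n(t,x_n(t))\,d\mu=\int_T\inf_x g_n(t,x)\,d\mu$, and the integrand of the difference being nonnegative forces $x_n(t)$ to minimize $g_n(t,\cdot)$ for a.e.\ $t$; by the exact sum rule for a $C^1$ summand, $x_n^\ast(t):=-\nabla_x\phi_n(t,x_n(t))\in\Asub f(t,x_n(t))$ a.e., which is (a). Setting $u_n^\ast(t):=pM_n\|x_n(t)-y_n\|^{p-1}\nabla\|\cdot\|(x_n(t)-y_n)$ and $\beta_n^\ast(\cdot):=\nabla_{x(\cdot)}\psi_n(z_n)$, one checks $x_n^\ast(\cdot)=-u_n^\ast(\cdot)-\beta_n^\ast(\cdot)\in\textnormal{L}^q(T,X^\ast)$ with $\|u_n^\ast(\cdot)\|_q=pM_n\|x_n(\cdot)-y_n\1_T\|_p^{p-1}$, and — the $z_i^{(n)}$ lying within $2\lambda_n$ of $z_n$ — $\|\nabla\psi_n(z_n)\|_{Z^\ast}\le p\,2^{p-1}\delta_n/\lambda_n\to0$, so $\|\beta_n^\ast(\cdot)\|_q\to0$ and $\|\nabla_y\psi_n(z_n)\|\to0$. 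Freezing instead the second coordinate at $x_n(\cdot)$, the $C^1$ map $y\mapsto M_n\int_T\|x_n(t)-y\|^p\,d\mu+\psi_n(y,x_n(\cdot))$ has a local minimum at $y_n$, so its derivative vanishes, i.e.\ $\int_T u_n^\ast(t)\,d\mu(t)=\nabla_y\psi_n(z_n)$; hence $\|\int_T u_n^\ast(t)\,d\mu(t)\|\to0$ and $\|\int_T x_n^\ast(t)\,d\mu(t)\|\le\|\nabla_y\psi_n(z_n)\|+\mu(T)^{1/p}\|\beta_n^\ast(\cdot)\|_q\to0$, giving (d). Property (b) is just $\|z_n-z_0\|_Z\le\lambda_n\to0$, and (c) follows from $\|x_n^\ast(\cdot)\|_q\|x_n(\cdot)-y_n\1_T\|_p\le pM_n\|x_n(\cdot)-y_n\1_T\|_p^{p}+\|\beta_n^\ast(\cdot)\|_q\|x_n(\cdot)-y_n\1_T\|_p\le p\,\delta_n+o(1)$, using $M_n\|x_n(\cdot)-y_n\1_T\|_p^p\le\delta_n$.

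Finally, for (e): the estimates above already give $\int_T[f(t,x_n(t))-f(t,x_0)]\,d\mu\to0$, and since $\|x_n(\cdot)-x_0\1_T\|_p\to0$ I would pass to a subsequence along which $x_n(t)\to x_0$ a.e.; then, by lower semicontinuity of $f(t,\cdot)$, $(f(t,x_0)-f(t,x_n(t)))^+\to0$ a.e., dominated by $f(\cdot,x_0)\in\textnormal{L}^1(T,\R)$ because $f\ge0$, so dominated convergence yields $\int_T(f(t,x_0)-f(t,x_n(t)))^+\,d\mu\to0$, whence $\int_T|f(t,x_n(t))-f(t,x_0)|\,d\mu=\int_T[f(t,x_n(t))-f(t,x_0)]\,d\mu+2\int_T(f(t,x_0)-f(t,x_n(t)))^+\,d\mu\to0$ along that subsequence (which still satisfies (a)--(d)). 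The step I expect to be the main obstacle is precisely this transfer from $\textnormal{L}^p$-minimality to the pointwise inclusion (a): it is what forces the two-variable variational principle — the auxiliary point $y$ together with the penalty $M_n\int_T\|x(t)-y\|^p\,d\mu$ being introduced exactly so that, after fixing $y=y_n$, the minimality becomes \emph{global} and the interchange result \cref{dualitycastaing} can be applied; a secondary technical point is the $C^1$ smoothness of $\|\cdot\|_Z^p$ (and hence of the Borwein--Preiss perturbation), which is the reason one works in the regime $p\in(1,+\infty)$.
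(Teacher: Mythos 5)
Your proposal is correct and follows essentially the same route as the paper: a two-variable Borwein--Preiss argument on $\textnormal{L}^p(T,X)\times X$ applied to the penalized functional coming from the $p$-robust minimum, followed by freezing each coordinate, Rockafellar's interchange of infimum and integral to pass to a.e.\ pointwise minimality, and the exact sum rule with a $C^1$ perturbation. The only differences are cosmetic (your explicit modulus $M_n$ and indicator $\delta_{\mathbb{B}(x_0,\eta)}$ versus the paper's penalty coefficient $n$ and the extra term $\|y-x_0\|^p$, and your dominated-convergence/subsequence argument for (e) in place of the paper's Lemma on $\textnormal{L}^1$-convergence of the integrand values).
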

\begin{proof}
	See \cref{Apendix:aproximatecalculsrules}.
	\end{proof}

Now we establish the two main results of this section. In order to show how
to adapt some of the settings available in the literature to our framework,
we consider in the following theorems two normal integrands $f,g:{T}%
\times X\to \overline{\mathbb{R}}$, satisfying the following properties:
\begin{align}\label{P1P2} \tag{P}
\begin{array}{l}
	\text{For all } t\in {T} \text{ and all  } x\in X,\;\; f(t,x)\geq g(t,x).\\
	\text{The functions } g_t \text{ are }C^1 \text{ for all } t\in {T}.
\end{array}
\end{align}

A typical example  is $g(t,x)=\langle a^*(t),x \rangle + \alpha(t)$ with $a^* \in
\textnormal{L}^p({T},X^*)$ and $\alpha \in \textnormal{L}^1({T},\mathbb{R})$, which corresponds to the convex case; i.e., when $f(t,\cdot)$ is convex ae (see 
\cite{INTECONV,MR2291564,MR2444461}).
The first main result shows a fuzzy calculus rules for the Fr\'echet subdifferential. This calculus rule can be obtained using an appropriated modification of the measure space and the normal integrand function in order to satisfy the assumptions of \cref{Prop:SuficientCondition}, and then we  transform a local minimum into a $p$-robust minimum. The proof of this result shows in particular the high potential of our general setting.

\begin{theorem}
	\label{theoremsubdiferential}  \hspace{-0.2cm} Let $f,g$ be two normal integrands satisfying %
	\cref{P1P2} and $p,q\in(1,+\infty)$ with $1/p+1/q=1$. Assume that $\mu$ is finite and the function $\sup_{u\in X}\|
	\nabla g(\cdot,u)\|$ belongs to $\textnormal{L}^q({T},\mathbb{R})$.  Then for every  $x^* \in \Asub \Intf{f}(x)$ and every $w^*$-continuous seminorm $\rho$ in $X^*$, there exist
	sequences $y_n \in X$, $x_n \in \textnormal{L}^p({T},X)$, $x_n^* \in 
	\textnormal{L}^q({T},X^*)$ such that:  
		\begin{enumerate}[label={(\alph*)},ref={(\alph*)}]
			\item $x_n^*(t) \in \Asub f(t,x_n(t))$ ae.
			\item $\| x- y_n\| \to 0$,$ \| x - x_n(\cdot)\|_p \to 0 $,
			\item $\| x_n^*(\cdot)\|_q \| x_n(\cdot)- y_n\|_p \to 0$,
			\item $\displaystyle\int_{T} \langle x_n^*(t), x_n(t)- x\rangle d\mu(t)\to 0$,
			\item \label{theoremsubdiferentiald} $\rho\left( \displaystyle \int_{T} x_n^*(t) d\mu(t) - x^*\right) \to 0$,
			\item $ \displaystyle \int_{T}| f(t,x_n(t)) - f(t,x)| d\mu(t)\to 0$.
		\end{enumerate}
\end{theorem}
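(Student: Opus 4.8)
The plan is to reduce the statement about the Fréchet subdifferential of the integral function $\Intf{f}$ to the fuzzy necessary condition for $p$-robust minima proved in \cref{aproximatecalculsrules}. The starting point is the viscosity characterization of the Fréchet subdifferential recalled in \cref{NotationCHAP5}: if $x^*\in\Asub\Intf{f}(x)$, then there is a neighbourhood $U$ of $x$ and a $C^1$ function $\phi:U\to\R$ with $\nabla\phi(x)=x^*$ such that $\Intf{f}-\phi$ attains a local minimum at $x$. The idea is to absorb $\phi$ (together with the majorant $g$) into the integrand by modifying the measure space: split $T$ into pieces of finite measure, normalize to make a new finite measure $\nu\sim\mu$ with, say, $\nu(T)=1$, and define a new normal integrand $\tilde f(t,x):=f(t,x)\,\tfrac{d\mu}{d\nu}(t)-\phi(x)$ (suitably truncated so that it is bounded below, using $g$ and \cref{P1P2} to control the subtracted $C^1$ term, and the hypothesis $\sup_{u}\|\nabla g(\cdot,u)\|\in\textnormal{L}^q$ to keep the derivative estimates integrable). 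Then $x$ is a local minimizer of $\IntfLp{\tilde f}$ restricted to constants, i.e.\ of $\Intf{\tilde f}$, with respect to $\nu$.

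Second, I would upgrade this local minimum to a $p$-robust local minimum. This is exactly the role of \cref{Prop:SuficientCondition}: after the modification, the integrand $\tilde f(t,\cdot)$ inherits lower semicontinuity, and — again using \cref{P1P2}, the fact that $g_t$ is $C^1$, and the integrability of $\sup_u\|\nabla g(\cdot,u)\|$ — one can arrange that on a set of positive measure $\tilde f(t,\cdot)$ is sequentially inf-compact (or Lipschitz with $q$-integrable constant, depending on which branch of \cref{Prop:SuficientCondition} is cheapest to verify in this setting). Hence $\wedge_{p,\mathbb{B}(x,\eta)}\Intf{\tilde f}=\inf_{\mathbb{B}(x,\eta)}\Intf{\tilde f}$ for small $\eta$, i.e.\ $x$ is a $p$-robust local minimizer of $\Intf{\tilde f}$ for the finite measure $\nu$.

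Third, I would apply \cref{aproximatecalculsrules} to $\tilde f$ and $\nu$ at $x_0=x$. This yields sequences $y_n\in X$, $x_n\in\textnormal{L}^p(T,X)$ and $\tilde x_n^*\in\textnormal{L}^q(T,X^*)$ with $\tilde x_n^*(t)\in\Asub\tilde f(t,x_n(t))$ ae, $\|x-y_n\|\to 0$, $\|x-x_n(\cdot)\|_p\to 0$ (both with respect to $\nu$, equivalently $\mu$), $\|\tilde x_n^*(\cdot)\|_q\|x_n(\cdot)-y_n\|_p\to 0$, $\|\int_T\tilde x_n^*(t)\,d\nu(t)\|\to 0$, and $\int_T|\tilde f(t,x_n(t))-\tilde f(t,x)|\,d\nu(t)\to 0$. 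Now I translate back. Because $\tilde f(t,\cdot)$ differs from $f(t,\cdot)\tfrac{d\mu}{d\nu}(t)$ by the $C^1$ function $-\phi$, the subdifferential sum rule for smooth perturbations gives $\Asub\tilde f(t,x_n(t))=\tfrac{d\mu}{d\nu}(t)\Asub f(t,x_n(t))-\nabla\phi(x_n(t))$; setting $x_n^*(t):=\big(\tilde x_n^*(t)+\nabla\phi(x_n(t))\big)\big/\tfrac{d\mu}{d\nu}(t)$ (on the region where the density is positive, which is all of $T$ up to a null set since $\nu\sim\mu$) produces a $\textnormal{L}^q(T,X^*;\mu)$-selection of $\Asub f(t,x_n(t))$, using $\|\nabla\phi(x_n(\cdot))-x^*\|\to 0$ from $C^1$-continuity of $\phi$ and $x_n\to x$ in measure along a subsequence. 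Change of variables $d\mu=\tfrac{d\mu}{d\nu}\,d\nu$ then converts $\int_T\tilde x_n^*\,d\nu$ into $\int_T x_n^*\,d\mu-\int_T\nabla\phi(x_n(t))\,d\mu(t)$, and since the latter integral tends to $x^*$ in the $w^*$-sense (indeed in norm, by dominated convergence and $\nabla\phi(x_n(t))\to x^*=\nabla\phi(x)$ pointwise along a subsequence, with domination coming from local boundedness of $\nabla\phi$ near $x$), we obtain $\rho(\int_T x_n^*(t)\,d\mu(t)-x^*)\to 0$ for every $w^*$-continuous seminorm $\rho$. The remaining items (b), (c), (d), (f) follow by the same change of variables and the equivalence of the $\mu$- and $\nu$-norms on the small ball, plus the identity $\langle x_n^*(t),x_n(t)-x\rangle\,\tfrac{d\mu}{d\nu}(t)=\langle\tilde x_n^*(t),x_n(t)-x\rangle+\langle\nabla\phi(x_n(t)),x_n(t)-x\rangle$ to recover (d) from the corresponding convergence in \cref{aproximatecalculsrules}.

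The main obstacle, and the step that needs the most care, is the modification of the measure and the integrand so that \emph{simultaneously} (i) the subtracted terms $\phi$ and the density $\tfrac{d\mu}{d\nu}$ keep $\tilde f$ a genuine normal integrand bounded below, (ii) the growth/Lipschitz hypothesis on $g$ transfers to a form of \cref{Prop:SuficientCondition} for $\tilde f$, and (iii) the change of variables does not destroy the delicate product estimate $\|x_n^*(\cdot)\|_q\|x_n(\cdot)-y_n\|_p\to 0$ — the density must be bounded above and below on the relevant set, which is why one localizes to a finite-measure piece and normalizes. Truncating $\phi$ outside $U$ and matching it smoothly to $g$ so that the global lower bound $f\ge g$ is preserved is the technically fussy ingredient; everything else is bookkeeping with Bochner integrals and the elementary subdifferential sum rule for $C^1$ perturbations.
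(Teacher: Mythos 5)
Your overall strategy---viscosity characterization of $x^*\in\Asub\Intf{f}(x)$, absorption of the test function $\phi$ into a modified integrand on a modified finite measure space, upgrade of the local minimum to a $p$-robust one via \cref{Prop:SuficientCondition}, then invocation of \cref{aproximatecalculsrules} and translation back---is indeed the paper's strategy. But there is a genuine gap at the robustness step, and that step is the crux of the whole proof. After your modification the integrand is $\tilde f(t,\cdot)=f(t,\cdot)\tfrac{d\mu}{d\nu}(t)-\phi(\cdot)$ for $t\in T$, and you assert that ``one can arrange that on a set of positive measure $\tilde f(t,\cdot)$ is sequentially inf-compact (or Lipschitz with $q$-integrable constant).'' Neither alternative is available: $X$ is infinite-dimensional, $f_t$ is an arbitrary nonnegative normal integrand (take $f\equiv 0$), and $-\phi$ is merely $C^1$, so it is neither norm-inf-compact on a ball nor weakly lsc in general; localizing with $\delta_{\mathbb{B}(x,\eta)}$ does not help because the ball is not sequentially inf-compact for any usable topology, and no hypothesis supplies a $q$-integrable Lipschitz constant for $f_t$. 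Without one of the criteria of \cref{Prop:SuficientCondition}, a local minimizer of $\Intf{\tilde f}$ need not be a $p$-robust local minimizer, and \cref{aproximatecalculsrules} cannot be applied.

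The ingredient you are missing is the paper's second atom. Besides an atom $\omega_1$ carrying $-\phi$ (your spreading of $-\phi$ over all of $T$ is a cosmetic variant of this), the paper adjoins an atom $\omega_2$ of measure one carrying $\delta_K$ with $K=L\cap\mathbb{B}(x,1)$, where $L$ is the finite-dimensional span of $x$ and of the points $e_i$ defining the seminorm $\rho$. Since $K$ is norm-compact, condition \cref{Prop:SuficientConditiona} of \cref{Prop:SuficientCondition} holds with $A=\{\omega_2\}$, a set of positive measure, and robustness follows. The price is that the subgradient selection at $\omega_2$ lands in $N_K(\cdot)=L^\perp$, which is precisely why the conclusion is convergence of $\int_T x_n^*\,d\mu$ to $x^*$ only in the seminorm $\rho$ (which annihilates $L^\perp$) rather than in norm; your argument, were it correct, would yield norm convergence, which by \cref{REMARK:CONVERGENCENORM} holds only under the additional hypotheses of \cref{Prop:SuficientCondition}---a strong indication that the finite-dimensional compactification cannot be dispensed with. (Your handling of $g$ by ``matching $\phi$ smoothly to $g$'' is also more involved than necessary: the paper simply applies the $g=0$ case to $h=f-g$ and uses \cref{lemmadifferenciabilidadnorma} to differentiate $\Intf{g}$.)
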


\begin{proof}
	First, assume that $g=0$. Then consider   $\varepsilon >0$ and  $\{ e_i\}_{i=1,...,k}$  a finite family of points such that $\rho(\cdot)=\max\{ \langle \cdot, e_i  \rangle : i=1,..,k \}$, and denote by $L:=\spn\{ x, e_i\}_{i=1}^{k}$ and  $K=L\cap \mathbb{B}(x,1)$. Since $x^\ast \in \Asub \Intf{f}(x)$ there are a ball  $\mathbb{B}(x,\eta)$  and $C^1$ function   $\phi : \mathbb{B}(x,\eta) \to \R$ such that $\nabla \phi(x)=x^*$ and $\Intf{f} -\phi$ attains  a local minimum at $x$. 
	
	Let us consider the measure space $(\tilde{{T}},\tilde{\mathcal{A}},\tilde{\mu}) $, where $\tilde{{T}}={T}\cup\{ \omega_1,\omega_2\}$ (with $\omega_1,\omega_2 \notin T$), $\tilde{\mathcal{A}}=\sigma(\mathcal{A}, \{ \omega_1\},\{\omega_2\})$ and $\tilde{\mu}(A)=\mu(A\backslash\{ \omega_1,\omega_2\}) + \1_{A}(\omega_1)+\1_{A}(\omega_2)$, together with the integrand function
	\begin{align}
	\tilde{f}(t,x)= \Bigg\{ \begin{array}{cl}
f(t,x), &\text{ if } x \in X \text{ and } t\in T,\\
-\phi(x),&\text{ if } x \in X \text{ and } t=\omega_1,\\
\delta_{K} (x),&\text{ if } x\in X \text{ and }t=\omega_2.
	\end{array}
	\end{align}
 Now  condition \cref{Prop:SuficientConditiona}  of   \cref{Prop:SuficientCondition} holds for the integrand $\tilde{f}$ and the  measure space $(\tilde{{T}},\tilde{\mathcal{A}},\tilde{\mu}) $. Furthermore, $\Intf{\tilde{f}}$ attains its minimum at $x$, and by \cref{Prop:SuficientCondition} we have that $x$ is a $p$-robust minimizer of $\Intf{\tilde{f}}$. Whence, by  \cref{aproximatecalculsrules} there exist  sequences $\tilde{y}_n \in X$, $\tilde{x}_n \in \textnormal{L}^p(\tilde{{T}},X)$, $\tilde{x}_n^* \in \textnormal{L}^q(\tilde{{T}},X^*)$ (with $1/p + 1/q=1$) such that:
		\begin{enumerate}[label=(\arabic*),ref=(\arabic*)]
			\item $\tilde{x}_n^*(t) \in \Asub \tilde{f}(t,\tilde{x}_n(t))$ ae,
			\item $\| x - \tilde{y}_n\| \to 0$, $\displaystyle \int\limits_{\tilde{T}} \| x - \tilde{x}_n(t)\|^p d\tilde{\mu}(t) \to 0 $,
			\item $ \| \tilde{x}_n^*(\cdot)\|_q \| \tilde{x}_n(\cdot)- \tilde{y}_n\|_p \to 0$,
			\item $\| \displaystyle \int\limits_{\tilde{T}} \tilde{x}_n^*(t) d\tilde{\mu}(t)\| \to 0$,
			\item $ \displaystyle \int\limits_{\tilde{T}} | f(t,\tilde{x}_n(t)) - f(t,x)|  d\tilde{\mu}(t)\to 0$.
		\end{enumerate}
	In particular, $\int_{\tilde{{\T}}} \tilde{x}^*_n(t) d\tilde{\mu}(t)$ is bounded, and so $\langle  \int_{\tilde{{\T}}}\tilde{x}^*_n(t)d\tilde{\mu}(t), \tilde{y}_n- x \rangle \to 0$. Hence, 
	\begin{align*}
	\bigg| \int_{\tilde{{\T}}}\langle \tilde{x}^*_n(t), \tilde{x}_n(t)- x\rangle d\tilde{\mu}(t)\bigg| &\leq   \bigg| \int\limits_{\tilde{{\T}}}\langle \tilde{x}^*_n(t), \tilde{y}_n - x\rangle d\tilde{\mu}(t) \bigg| \\& +  \int\limits_{\tilde{{\T}}} \| \tilde{x}_n^*(t)\| \| \tilde{x}_n(t)- \tilde{y}_n\| d\tilde{\mu}(t) \to 0.
	\end{align*}
	Now,  define $x_n(t):=\tilde{x}_n(t)$,  $x^*_n(t):=\tilde{x}_n^*(t)$ with $t\in {\T}$ and $y_n=\tilde{y}_n$. So, $x_n \in \textnormal{L}^p({\T},X)$, $x_n^* \in \textnormal{L}^q({\T},X^*)$, $x_n^*(t) \in \Asub f(t,x_n(t))$ ae, $\| x - y_n\| \to 0$, $ \int_{\T} \| x - x(t)\|^p d\mu(t) \to 0 $, $ \| x_n^*(\cdot)\|_q \| x_n(\cdot)- y_n\|_p \to 0$ and $  \int_{\T} | f(t,x_n(t)) - f(t,x)|  d\mu(t)\to 0$.
	
	Next, we get $\tilde{x}^*_n(\omega_1)=-\nabla \phi(\tilde{x}_n(\omega_1)) \overset{w^*}{\to} -\nabla\phi(x)=-x^*$. By using the convexity of $K$, we have that for a large enough $n$, $\tilde{x}^*_n(\omega_2) \in N_{K}(\tilde{x}_n(\omega_2 ))=L^\perp$. Therefore, 
	\begin{align*} 
		\rho\left( \displaystyle \int\limits_T x_n^*(t) d\mu(t)  - x^*\right)&\leq \rho\left( \displaystyle \int\limits_T x_n^*(t) d\mu(t)  +\tilde{x}^*_n(\omega_1) +\tilde{x}^*_n(\omega_2)\right)  \\&+ \rho( -\tilde{x}^*_n(\omega_1) - x^*) + \rho( \tilde{x}^*_n(\omega_2) ) \\
		&\leq \| \displaystyle \int\limits_T \tilde{x}_n^*(t) d\tilde{\mu}(t) \| + \rho( -\tilde{x}^*_n(\omega_1) - x^*) + \rho( \tilde{x}^*_n(\omega_2) )  \to 0.
	\end{align*}
	On the one hand, since $\tilde{x}^*_n(\omega_1)$ is bounded and  $\tilde{x}_n(\omega_1) \to x$, we have $\langle \tilde{x}^*_n(\omega_1), \tilde{x}_n(\omega_1) - x \rangle \to 0$. On the other hand, since for large enough $n$, $\langle \tilde{x}^*_n(\omega_2), \tilde{x}_n(\omega_2) - x \rangle = 0$, we get
	\begin{align*}
		\bigg| \int_{T} \langle x_n^*(t), x_n(t)- x\rangle d\mu(t)\bigg|   \leq&  \bigg|  \int_{\tilde{{T}}}\langle \tilde{x}^*_n(t), \tilde{x}_n(t)- x\rangle d\tilde{\mu}(t)\bigg|  + \bigg|   \langle \tilde{x}^*_n(\omega_1), \tilde{x}_n(\omega_1) - x \rangle \bigg| \\
		& + \bigg|  \langle \tilde{x}^*_n(\omega_2), \tilde{x}_n(\omega_2) - x \rangle   \bigg|   \to 0.
	\end{align*}
	Finally, if $g$ is not zero, we know  by  \cref{lemmadifferenciabilidadnorma} that the gradient of  $\Intf{g}$ is given by $\int_{T} \nabla g_t(x) d\mu(t)$. Then we apply the result to the integrand function $h:=f-g$, with the gradient $y^* :=x^* - \int_{T} \nabla g_t(x) d\mu(t) \in \Asub \Intf{h}(x)$, and the result follows after some standard calculations.
\end{proof}
The next theorem corresponds to the $p=+\infty$ version of \cref{theoremsubdiferential}.  This theorem is obtained using \cref{theoremsubdiferential}, and modifying the measurable selection in a set of small measure. It is important to mention that this technique produces measurable selections in spaces of functions, which are not necessarily Asplund spaces (like $\textnormal{L}^\infty({T},X)$ and $\textnormal{L}^1({T},X^*)$). Consequently, it would not be possible to get this fuzzy calculus  using simply the chain rule for the Fr\'echet subdifferential, as it was done in \cite{MR2444461} for the convex subdifferential.

\begin{theorem}
	\label{theoremsubdiferentialinfinity} Let $f,g$ be two normal integrands
	satisfying \cref{P1P2}. Assume that  the function $\sup_{u\in X}\| \nabla g(\cdot,u)\| $ belongs  to $\textnormal{L}^1({T},%
	\mathbb{R})$. Then for every  $x^* \in \Asub \Intf{f}(x)$ and  every $w^*$-continuous seminorm $\rho$ in $X^*$, there exist
	sequences $y_n \in X$, $x_n \in \textnormal{L}^\infty({T},X)$, $x_n^* \in \textnormal{L}^1({T},X^*)$ such that 
		\begin{enumerate}[label={(\alph*)},ref={(\alph*)}]
			\item $x_n^*(t) \in \Asub f(t,x_n(t))$ ae.
			\item $\| x- y_n\| \to 0$, $ \| x - x_n(\cdot)\|_\infty \to 0 $.
			\item $ \displaystyle\int_{T} \| x_n^*(t)\| \| x_n(t)- y_n\| d\mu(t) \to 0$.
			\item $\displaystyle\int_{T} \langle x_n^*(t), x_n(t)- x\rangle d\mu(t)\to 0$.
			\item \label{theoremsubdiferentialinfinityad} $\rho\big( \displaystyle \int_{T} x_n^*(t) d\mu(t) - x^*\big) \to 0$.
			\item $ \displaystyle \int_{T}| f(t,x_n(t)) - f(t,x)| d\mu(t)\to 0$.
		\end{enumerate}
\end{theorem}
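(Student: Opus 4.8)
The plan is to derive \cref{theoremsubdiferentialinfinity} from \cref{theoremsubdiferential} by truncating the measurable selections on sets of arbitrarily small measure. As usual we may assume $\mu$ finite (replacing it by an equivalent finite measure and $f,g$ by the rescaled normal integrands, which preserves \cref{P1P2} and the $\textnormal{L}^1$-integrability of $\sup_{u}\|\nabla g(\cdot,u)\|$), and, exactly as in the last paragraph of the proof of \cref{theoremsubdiferential} — via \cref{lemmadifferenciabilidadnorma}, which gives $\nabla\Intf{g}(x)=\int_T\nabla g_t(x)\,d\mu(t)$ under the $\textnormal{L}^1$-hypothesis — we may pass to the nonnegative normal integrand $h:=f-g$ and to $y^*:=x^*-\int_T\nabla g_t(x)\,d\mu(t)\in\Asub\Intf{h}(x)$; once the statement is established for $(h,0)$, adding $\nabla g_t(x_n(t))$ back to $x_n^*(t)$ pointwise recovers it for $(f,g)$ (the integral $\int_T\nabla g_t(x_n(t))\,d\mu(t)$ converges in norm, hence in $\rho$, to $\int_T\nabla g_t(x)\,d\mu(t)$ by dominated convergence, with majorant $\sup_u\|\nabla g(\cdot,u)\|\in\textnormal{L}^1$ and $\|x_n(\cdot)-x\|_\infty\to0$, and the remaining properties are routine). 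So assume $\mu$ finite, $f=h\ge0$, $g=0$, and $x^*\in\Asub\Intf{h}(x)$; in particular $h(t,x)<+\infty$ a.e.

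As $g=0$ satisfies the growth condition of \cref{theoremsubdiferential} for every pair $p,q\in(1,+\infty)$ with $1/p+1/q=1$, fix such a pair and apply \cref{theoremsubdiferential} to $(h,0)$, $y^*$ and $\rho$: we obtain $\hat y_n\in X$, $\hat x_n\in\textnormal{L}^p(T,X)$, $\hat x_n^*\in\textnormal{L}^q(T,X^*)$ with $\hat x_n^*(t)\in\Asub h(t,\hat x_n(t))$ a.e., satisfying properties (a)--(f) of that theorem for $h$. The only discrepancy with what we want is that $\hat x_n$ need not be essentially bounded. Fix $r_n\downarrow0$ and set $A_n:=\{t\in T:\ \|\hat x_n(t)-x\|>r_n\}$; by Chebyshev $\mu(A_n)\le r_n^{-p}\|\hat x_n(\cdot)-x\|_p^p$, so, letting $r_n$ decay slowly enough relative to $\|\hat x_n(\cdot)-x\|_p$ and passing to a subsequence, $\mu(A_n)\to0$. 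We modify the selections on $A_n$ only. For a.e.\ $t$, since $h_t$ is lsc, finite at $x$, bounded below (this is where $h\ge0$ is used) and $X$ is Asplund with a $C^1$ norm off the origin, the Borwein--Preiss principle applied to $h_t+\delta_{\mathbb{B}(x,2r_n)}$ with parameter $r_n$ (of which $x$ is a $\kappa_n(t)$-approximate minimizer, $\kappa_n(t):=h(t,x)-\inf_{\mathbb{B}(x,2r_n)}h_t+\varepsilon_n'$ for a prescribed $\varepsilon_n'>0$) produces $u$ with $\|u-x\|\le r_n$ and a Fr\'echet subgradient $u^*\in\Asub h_t(u)$ satisfying $\|u^*\|\le c\,r_n^{-1}\kappa_n(t)$ and $|h(t,u)-h(t,x)|\le c\,\kappa_n(t)$ ($c$ an absolute constant; here one uses $\Asub(h_t+r_n^{-2}\sum_i\mu_i\|\cdot-u_i\|^2)=\Asub h_t+\nabla(r_n^{-2}\sum_i\mu_i\|\cdot-u_i\|^2)$ since the quadratic perturbation is $C^1$). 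The multifunction $t\tto\{(u,u^*)\in X\times X^*:\ \text{the above hold}\}$ is graph measurable — using measurability of the Fr\'echet subdifferential multifunction of a normal integrand, measurability of $t\mapsto\inf_{\mathbb{B}(x,2r_n)}h_t$ (an infimum over a countable dense set, by lower semicontinuity), joint measurability of $h$, and the Suslin property of $(X^*,w^*)$ — so \cref{measurableselectiontheorem} yields a measurable selection $(\sigma_n,\sigma_n^*)$. Put $x_n:=\hat x_n,\ x_n^*:=\hat x_n^*$ on $T\setminus A_n$, $x_n:=\sigma_n,\ x_n^*:=\sigma_n^*$ on $A_n$, and $y_n:=\hat y_n$; then $x_n\in\textnormal{L}^\infty(T,X)$ and $x_n^*\in\textnormal{L}^1(T,X^*)$.

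It remains to check (a)--(f). Property (a) holds by construction; (b) because $\|x-x_n(\cdot)\|_\infty\le r_n\to0$ and $\|x-y_n\|\to0$. For (c)--(f) one splits each integral over $T\setminus A_n$ and over $A_n$: the $T\setminus A_n$-part is dominated by the corresponding full-$T$ quantity of \cref{theoremsubdiferential} (for $h$), hence $\to0$; the $A_n$-part is controlled by $\|\sigma_n(t)-x\|\le r_n$, $|h(t,\sigma_n(t))-h(t,x)|\le c\,\kappa_n(t)$ and $r_n\int_{A_n}\|\sigma_n^*\|\,d\mu\le c\big(\int_T(h(t,x)-\inf_{\mathbb{B}(x,2r_n)}h_t)\,d\mu+\varepsilon_n'\mu(T)\big)$, which $\to0$ by dominated convergence (majorant $h(\cdot,x)\in\textnormal{L}^1$, and $\inf_{\mathbb{B}(x,r)}h_t\uparrow h(t,x)$ as $r\downarrow0$) once $\varepsilon_n'\to0$. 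The main obstacle is precisely this modification on $A_n$ — and it is why the theorem is phrased for nonnegative integrands: one must replace an $\textnormal{L}^p$-function that may be large and carries no nearby subgradient datum by a function uniformly $r_n$-close to $x$, measurably, still carrying a genuine Fr\'echet subgradient, whose $\textnormal{L}^1$-mass over $A_n$ (weighted by the displacement) stays controlled. Nonnegativity makes $h_t$ bounded below near $x$, hence makes the quantitative Borwein--Preiss bound $\|u^*\|\lesssim\kappa_n(t)/r_n$ available; the rate $r_n\downarrow0$ and the subsequence must be coordinated with \cref{theoremsubdiferential}(b),(c) so that the residual terms $\|\hat y_n-x\|\int_{A_n}\|\sigma_n^*\|\,d\mu$ and $\rho\big(\int_{A_n}\hat x_n^*\,d\mu\big)$ vanish. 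Undoing the reductions of the first paragraph then completes the proof.
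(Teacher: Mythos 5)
Your strategy is at bottom the paper's own: apply the $\textnormal{L}^p$ result, isolate a bad set $A_n$ of small measure, replace the selection there by a measurable Borwein--Preiss output whose subgradient norm is of order $f(t,x)/(\text{radius})$ (this is exactly \cref{lemmasubdeltaminimun}, which you rederive by hand, including the measurable-selection argument), and glue. The structural differences are minor: the paper first passes to $\tilde f_t:=f_t+\delta_{\mathbb{B}(x,\varepsilon)}$ with $\varepsilon$ \emph{fixed}, so the bad set is the sphere $\{t:\|\tilde x_n(t)-x\|=\varepsilon\}$ and every smallness requirement is met by simply taking $n$ large for that fixed $\varepsilon$ (then letting $\varepsilon\downarrow 0$); you keep $f$ and truncate via Chebyshev at a level $r_n\downarrow0$ coupled to $n$, which forces the rate coordinations you allude to at the end. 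The reductions ($\sigma$-finite to finite, general $g$ to $g=0$ via $h=f-g$ and \cref{lemmadifferenciabilidadnorma}) match the paper's.

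The coupling is where the writeup has a concrete hole, in the verification of (e). One has $\int_T x_n^*\,d\mu=\int_T\hat x_n^*\,d\mu-\int_{A_n}\hat x_n^*\,d\mu+\int_{A_n}\sigma_n^*\,d\mu$, so besides $\rho\big(\int_{A_n}\hat x_n^*\,d\mu\big)\to0$ (which you do list, and which follows from \cref{theoremsubdiferential}(c) once $\|\hat x_n(t)-\hat y_n\|\ge r_n/2$ on $A_n$ and $r_n$ decays slowly enough relative to $\|\hat x_n^*\|_q\|\hat x_n-\hat y_n\|_p$) you need $\int_{A_n}\|\sigma_n^*\|\,d\mu\to0$ \emph{without} a damping factor. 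The only estimate you record is $r_n\int_{A_n}\|\sigma_n^*\|\,d\mu\le c\big(\int_T(h(t,x)-\inf_{\mathbb{B}(x,2r_n)}h_t)\,d\mu+\varepsilon_n'\mu(T)\big)\to0$; this suffices for the $A_n$-contributions to (c) and (d), where the integrand carries the extra factor $\|\sigma_n(t)-\hat y_n\|\le 2r_n$, but dividing by $r_n$ destroys the conclusion for (e), since $\int_T\big(h(t,x)-\inf_{\mathbb{B}(x,2r_n)}h_t\big)\,d\mu$ tends to $0$ yet need not be $o(r_n)$. The repair is to integrate $\kappa_n$ over $A_n$ only and exploit the smallness of $\mu(A_n)$: by absolute continuity of $h(\cdot,x)\,d\mu$ one can choose $r_n$ so that additionally $\int_{A_n}h(t,x)\,d\mu=o(r_n)$, and take $\varepsilon_n'=o(r_n)$, giving $\int_{A_n}\|\sigma_n^*\|\,d\mu\le c\,r_n^{-1}\big(\int_{A_n}h(t,x)\,d\mu+\varepsilon_n'\mu(A_n)\big)\to0$. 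This is precisely the role of the paper's condition $\int_{A_n}f(t,x)\,d\mu\le\varepsilon^2/24$, which together with $\|y^*(t)\|\le 8f(t,x)/\varepsilon$ yields $\int_{A_n}\|y^*\|\,d\mu\le\varepsilon/3$ directly. With that one adjustment your argument goes through.
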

\begin{proof}
	Consider  $\rho$ and $x^*\in \Asub \Intf{f}(x)$ as in the statement. First we  assume  that   $\mu$ is finite  and $g=0$,  and so  we have that $f(t,x)\geq 0$ for all $t\in {T}$ and  all $x\in X$. Let $\varepsilon\in (0,1)$ and define  $\tilde{f}(t,x'):= f(t,x') + \delta_{\mathbb{B}(x,\varepsilon)}(x')$. It follows that $x^*\in \Asub \Intf{\tilde{f}}(x)$. Then by  \cref{theoremsubdiferential} there exist measurable functions $ \tilde{x}_n\in \textnormal{L}^2({T},X)$, $ \tilde{x}_n^* \in \textnormal{L}^2({T},X^*)$ such that:
		\begin{enumerate}[label=(\arabic*),ref=(\arabic*)]
			\item $\tilde{x}_n^*(t) \in \Asub \tilde{f}(t,\tilde{x}_n(t))$ ae,
			\item $\| x - \tilde{y}_n\| \to 0$,$\displaystyle \int\limits_T \| x - \tilde{x_n}(t)\|^2 d\mu(t) \to 0 $,
			\item $ \| \tilde{x}_n^*(\cdot)\|_2 \| \tilde{x}_n(\cdot)- \tilde{y}_n\|_2 $,
			\item $\rho( \displaystyle \int\limits_T \tilde{x}_n^*(t) d\mu(t)  - x^*) \to 0$,
			\item $ \displaystyle \int\limits_T | f(t,\tilde{x}_n(t)) - f(t,x)|  d\mu(t)\to 0$.
		\end{enumerate}
	It is easy to see that if $\| \tilde{x}_n(t) -x\| <\varepsilon$, then $\tilde{x}_n^*(t)\in \Asub f(t,\tilde{x}_n(t))$. Define the measurable sets  $A_n:=\{ t \in {T}: \| \tilde{x}_n(t) -x\| =\varepsilon  \}$. The convergence in $\textnormal{L}^2({T},X)$ implies that $\mu(A_n)\to 0$. We take $n\in \N$ such that 
	\begin{enumerate}[label=(\roman*),ref=(\roman*)]
		\item $\| x -\tilde{y}_n\| \leq \varepsilon/2$, $\rho( \displaystyle\int\limits_T \tilde{x}_n^*(t) d\mu(t)  - x^* )\leq \varepsilon/3$,
		\item\label{ITEM2} $\displaystyle\int_{T}\| \tilde{x}^*_n(t)\| \| \tilde{y}_n - \tilde{x}_n(t)\| d\mu(t) \leq \varepsilon^2/6$, $\displaystyle\int_{\T} | f(t,\tilde{x}_n(t)) - f(t,x)|  d\tilde{\mu}(t)\leq \varepsilon/2 $,
		\item  $\displaystyle\int_{\T} \langle x_n^*(t), x_n(t)- x\rangle d\mu(t) \leq \varepsilon/3$ and  $\displaystyle\int_{A_n} f(t,x) d\mu(t) \leq \varepsilon^2/24.$
	\end{enumerate}
	 It follows from \cref{ITEM2} and the definition of $A_n$ that
	\begin{align*}
	\frac{\varepsilon^2}{6} &\geq \int_{\T}\| \tilde{x}^*_n(t)\| \| \tilde{y}_n - \tilde{x}_n(t)\| d\mu(t) \geq \int_{A_n}\| \tilde{x}^*_n(t)\| \| \tilde{y}_n - \tilde{x}_n(t)\| d\mu(t)\\
	&\geq \int_{A_n}\| \tilde{x}^*_n(t)\|\big(  \| x-  \tilde{x}_n(t)\|  - \| x- \tilde{y}_n \|\big)  d\mu(t)\geq  \int_{A_n}\| \tilde{x}^*_n(t)\|\big(\varepsilon  -  \frac{\varepsilon}{2} \big)  d\mu(t)\\
	&\geq \frac{\varepsilon}{2}  \int_{A_n}\| \tilde{x}^*_n(t)\|d\mu(t).
	\end{align*}
	Therefore, $ \int_{A_n}\| \tilde{x}^*_n(t)\|d\mu(t) \leq \frac{\varepsilon}{3}$. We set  $\varepsilon(t):=f(t,x)$ and by the nonnegativity  of the integrand, we have that $x$ is a $\varepsilon(t)$-minimum of $f(t,\cdot)$ for almost all $t\in A_n$. Then by  \cref{lemmasubdeltaminimun} there exist measurable functions $(y(t),y^*(t) )\in X\times X^*$ such that for almost all $t\in A_n$, $y^*(t)\in \Asub f(t,y(t))$, $\| y(t) - x\| \leq \varepsilon/2$, $|f(t,y(t)) - f(t,x)| \leq \varepsilon(t)$ and $\| y^*(t) \| \leq 8\varepsilon(t)/\varepsilon$. 
	
	Let us define $x(t):=\tilde{x}_n(t) \1_{A_n^c}(t) + y(t)\1_{A_n}$ and $x^*(t):=\tilde{x}^*_n(t) \1_{A_n^c}(t) + y^*(t)\1_{A_n}$.  Hence, $x^*(t)\in \Asub f(t,x(t))$ ae, $\| x- x(\cdot)\|_{\infty}\leq \varepsilon$, 
	\begin{align*}   
		\int_{T} \| y^*(t) \| d\mu(t) &= \int_{A_n^c}\| \tilde{x}_n^*(t) \| d\mu(t) + \int_{A_n}\| y^*(t) \| d\mu(t)\\
		& \leq \mu({T})^{1/2} \| \tilde{x}_n^*(\cdot) \|_2 +  \varepsilon/3,\\ 
		\int_{T} | f(t,x(t)) - f(t,x)|  d\mu(t)
		&= \int_{A_n^c} | f(t,\tilde{x}_n(t)) - f(t,x)|  d\mu(t) \\ & + \int_{A_n} | f(t,y(t)) - f(t,x)|  d\mu(t)\leq  \varepsilon/2 + \varepsilon/2  = \varepsilon.
	\end{align*}
	Furthermore, 
	\begin{align*}
		\rho(  \int_{T} x^*(t) d\mu(t)  - x^*) &\leq \rho(  \int_{T} \tilde{x}_n^*(t) d\mu(t)  - x^*)  +  \int_{A_n} \| \tilde{x}_n^*(t)\| d\mu(t) \\ & + \int_{A_n} \|y^*(t)\| d\mu(t) \leq \varepsilon/3 +\varepsilon/3 +\varepsilon/3 = \varepsilon,
		\end{align*}
	 so that  
	\begin{align*}                
	\int\limits_T \| x^*(t)\| \| \tilde{y}_n -x(t)\| d\mu(t) &=\int\limits_{A_n^c} \|\tilde{x}^*(t) \| \|\tilde{y}_n - \tilde{x}_n(t) \| d\mu(t)\\& +  \int_{A_n} \| y^*(t) \|\big( \| \tilde{y}_n - x \| + \|y(t) - x \|  \big)  d\mu(t) \\
	&\leq   \varepsilon^2/6 + \varepsilon^2/3 + \varepsilon^2/6  \leq \varepsilon.
	\end{align*}
	Finally, 
	\begin{align*}
		\big| \int_{T} \langle x^*(t), x(t)- x\rangle d\mu(t)\big| \leq& \Bigg|\int_{A_n^c} \langle \tilde{x}_n^*(t), \tilde{x}_n(t)- x\rangle d\mu(t)  + \int_{A_n} \langle y^*(t), y(t)- x\rangle d\mu(t) \Bigg|\\
		\leq &\big|\int_{{T}} \langle \tilde{x}_n^*(t), \tilde{x}_n(t)- x\rangle d\mu(t) \big| \\&+ \int_{A_n} \| \tilde{x}_n^*(t)\| \cdot \| \tilde{x}_n(t)- x\|  d\mu(t) \\
		& + \int_{A_n} \| y^*(t)\| \cdot \|y(t)- x\| d\mu(t) \leq 2\varepsilon/3 
		+\epsilon^2/6 \leq  \varepsilon.
	\end{align*}
	Now, if $\mu$ is $\sigma$-finite, consider $\nu(\cdot) = \int_{\cdot} k(t) d\mu(t)$, where $k>0$ is integrable and consider the integrand $\tilde{f}(t,x)=f(t,x)/k(t)$. So, $\Intf{\tilde{f}}^{\nu} =\Intf{f}^\mu$, and then by applying the previous part  we easily get  the result. The general case, when $g$ is not zero, follows  the same arguments given in the proof of  \cref{theoremsubdiferential}.\end{proof}

\begin{remark}\label{REMARK:CONVERGENCENORM}
		It has not escaped to our notice that if  one of the conditions of  \cref{Prop:SuficientCondition} holds, then the convergence of $\int\limits_{\T} x^\ast_n (t)d\mu(t)$ to the subgradient $x^\ast$ in \cref{theoremsubdiferential,theoremsubdiferentialinfinity}, with respect to the seminorm $\rho$, can be changed by the convergence in norm topology. Indeed, if one of the conditions of  \cref{Prop:SuficientCondition} holds, then we proceed similarly as in the proof of \cref{theoremsubdiferential}, by taking simply $K=L=X$. So, the estimates follow similarly, but with the norm instead of the seminorm $\rho$.
	\end{remark}
To ilustrate our results  we compute sequential formulae for  series of lower semicontinuous functions using the measure space $(\mathbb{N},\mathcal{P}(\mathbb{N}))$. This class of functions has been recently studied in the convex case (see, e.g., \cite{MR3571567,INTECONV,INTECONV2}), motivated by some applications to  entropy minimization. Moreover, in this case we can apply techniques of separable reduction, and extend the results to an arbitrary Asplund space. The proof of the following result is written in  \cref{proofofseparablereduction}, for simplicity.
\begin{corollary}
	\label{separablereduction}  The statement of  \cref{theoremsubdiferential,theoremsubdiferentialinfinity} holds if we
	assume that $X$ is a non-separable Asplund  space and $(T,\mathcal{A})=(\mathbb{N},\mathcal{P}(%
	\mathbb{N}))$.
\end{corollary}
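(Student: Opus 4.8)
\emph{Proof plan (separable reduction).} The plan is to deduce the statement for a non-separable Asplund space $X$ from the already proven separable case of \cref{theoremsubdiferential,theoremsubdiferentialinfinity}. Fix $x^{*}\in\Asub\Intf{f}(x)$ and a $w^{*}$-continuous seminorm $\rho$ on $X^{*}$; recall that $\rho$ is determined by finitely many vectors $e_{1},\dots,e_{k}\in X$. Since $(\T,\mathcal A)=(\mathbb N,\mathcal P(\mathbb N))$, the integrand $f$ is merely a \emph{countable} family of lsc functions $f(n,\cdot)$, $n\in\mathbb N$, and measurability of all functions on $\T$ is automatic. This is precisely what makes it possible for a \emph{single} separable closed subspace $Y\subseteq X$ to carry the whole construction, after which the conclusion is pulled back to $X$ via Hahn--Banach extensions.

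\emph{Choice of $Y$.} Because $X$ is Asplund, each lsc function $f(n,\cdot)$ generates a \emph{rich} family of separable subspaces that reduce its Fr\'echet subdifferentiability (separable reduction for Fr\'echet subgradients; see, e.g., \cite{MR2766381,MR2191744} and the references therein): for $Y$ in the family and every $y\in Y$, any $y^{*}\in\Asub\bigl(f(n,\cdot)|_{Y}\bigr)(y)$ extends, with control on its norm, to some $x^{*}\in\Asub f(n,y)\subseteq X^{*}$ with $x^{*}|_{Y}=y^{*}$. A countable intersection of rich families is again rich, and one may always require a prescribed separable subset to lie in a member of a rich family; hence I take a single separable closed $Y\subseteq X$ that simultaneously reduces $\Asub f(n,\cdot)$ for all $n\in\mathbb N$ and contains $x,e_{1},\dots,e_{k}$. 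Being separable Asplund, $Y$ has a separable dual and admits an equivalent norm that is $C^{1}$ off the origin; I renorm $Y$ with it so as to meet the hypotheses of the separable theorems, noting that the Fr\'echet subdifferential and all the convergence assertions occurring in those theorems are insensitive to passing to an equivalent norm.

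\emph{Descent, application, and ascent.} The restrictions $f|_{\T\times Y}$ and $g|_{\T\times Y}$ are nonnegative normal integrands on $\T\times Y$ still obeying \cref{P1P2}, and $\sup_{u\in Y}\|\nabla(g_{t}|_{Y})(u)\|=\sup_{u\in Y}\|\pi_{Y}\nabla g_{t}(u)\|\le\sup_{u\in X}\|\nabla g_{t}(u)\|$, so the integrability requirements pass to $Y$; moreover $\Intf{f}$ restricted to $Y$ equals the integral of $f|_{\T\times Y}$. From the $\liminf$ definition, the inequality defining $x^{*}\in\Asub\Intf{f}(x)$ over increments in $X$ entails the same one over increments in $Y$, hence $x^{*}|_{Y}\in\Asub\bigl(\Intf{f}|_{Y}\bigr)(x)$; and since $e_{1},\dots,e_{k}\in Y$, $\rho$ restricts to a $w^{*}$-continuous seminorm $\rho_{Y}$ on $Y^{*}$ with $\rho(z^{*})=\rho_{Y}(z^{*}|_{Y})$ for all $z^{*}\in X^{*}$. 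Applying \cref{theoremsubdiferential} (resp. \cref{theoremsubdiferentialinfinity}) inside $Y$ with datum $(x^{*}|_{Y},\rho_{Y})$ produces sequences $y_{n}\in Y$, $x_{n}(\cdot)\in\textnormal{L}^{p}(\T,Y)$ (resp. $\textnormal{L}^{\infty}$) and $x_{n}^{*}(\cdot)\in\textnormal{L}^{q}(\T,Y^{*})$ (resp. $\textnormal{L}^{1}$) satisfying the $Y$-analogues of (a)--(f). Finally I lift: for each $n$ and $t$, choose $\bar{x}_{n}^{*}(t)\in\Asub f(t,x_{n}(t))\subseteq X^{*}$ with $\bar{x}_{n}^{*}(t)|_{Y}=x_{n}^{*}(t)$ and $\|\bar{x}_{n}^{*}(t)\|$ exceeding $\|x_{n}^{*}(t)\|$ by an amount negligible in the relevant $\textnormal{L}^{q}(\T)$-norm; since $\T=\mathbb N$ the function $\bar{x}_{n}^{*}(\cdot)$ is automatically measurable and belongs to $\textnormal{L}^{q}(\T,X^{*})$ (resp. $\textnormal{L}^{1}$). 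Then $\langle\bar{x}_{n}^{*}(t),x_{n}(t)-x\rangle$ and $f(t,x_{n}(t))$ are unchanged because $x_{n}(t),x\in Y$; the Bochner integral $\int_{\T}\bar{x}_{n}^{*}(t)\,d\mu(t)\in X^{*}$ restricts to $\int_{\T}x_{n}^{*}(t)\,d\mu(t)$, so $\rho\bigl(\int_{\T}\bar{x}_{n}^{*}(t)\,d\mu(t)-x^{*}\bigr)=\rho_{Y}\bigl(\int_{\T}x_{n}^{*}(t)\,d\mu(t)-x^{*}|_{Y}\bigr)\to0$; and $\|\bar{x}_{n}^{*}(\cdot)\|_{q}\,\|x_{n}(\cdot)-y_{n}\|_{p}\to0$ by the norm control together with the boundedness of $\|x_{n}(\cdot)-y_{n}\|_{p}$. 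After undoing the renorming of $Y$ (which only rescales the quantities in (b) and (c) by bounded factors) the triple $(y_{n},x_{n},\bar{x}_{n}^{*})$ witnesses (a)--(f) in $X$, which is the claim.

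\emph{Main obstacle.} The only substantive point is the choice of $Y$: one must secure a \emph{single} separable subspace that reduces the Fr\'echet subdifferentials of the countably many integrands $f(n,\cdot)$ at all points of $Y$ at once, and with norm control on the lifted subgradients, the latter being exactly what is needed for item~(c) to survive the passage from $Y^{*}$ back to $X^{*}$. This is what the separable reduction theory for Asplund spaces delivers, via the stability of rich families under countable intersections and the freedom to prescribe a separable subset; the remaining steps are routine bookkeeping with restrictions, Hahn--Banach extensions, and equivalent renormings.
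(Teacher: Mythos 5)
Your proposal is correct and follows essentially the same route as the paper's proof in \cref{proofofseparablereduction}: a rich family of separable subspaces reducing the Fr\'echet subdifferentials of the countably many integrands simultaneously (stable under countable intersections, and containing $x$ and the vectors defining $\rho$), application of \cref{theoremsubdiferential,theoremsubdiferentialinfinity} in the reduced separable subspace, and a norm-controlled lift of the subgradients back to $X^*$ via the reduction identity $(\Asub f(v))_{|V}=\Asub f_{|V}(v)$ of \cref{PROPSEPARABLE}. The only cosmetic difference is that the paper's lift is exactly isometric (the restriction map $Y\to V^*$ is a surjective isometry), whereas you settle for approximate norm control, which suffices equally well for item (c).
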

\begin{proof}
	See \cref{proofofseparablereduction}.
	\end{proof}

The final result corresponds to an extension of \cite[Corollary 1.2.1]%
{MR2444461} to the case $p=+\infty$, which characterizes the convex subdifferential of  the  integral functional  $\Intf{f}$, when the data is a convex normal integrand.
\begin{corollary}
	\label{corollary:convex}  In the setting of  \cref{theoremsubdiferentialinfinity}, assume that $f$ is a convex normal integrand
	(i.e.; $f_t$ is convex for all $t\in {T}$). Then one has $x^*\in \sub \Intf{f}(x)$
	if and only if there are nets $x_\nu \in \textnormal{L}^\infty({T},X)$ and $x_\nu^*\in \textnormal{L}^1({T},X^*)$ such that
	\begin{enumerate}[label=(\alph*)]
		\item   $x_\nu^*(t) \in \sub 	f(t,x_\nu(t))$ ae, 
		\item $\| x - x_\nu(\cdot)\|_\infty \to 0 $,
		\item $\displaystyle\int_{T}
		x_\nu^*(t) d\mu(t) \overset{w^*}{\to} x^* $,
		\item $\displaystyle\int_{{T}}\langle x^*_\nu(t),
		x_\nu(t)- x\rangle d\mu(t)\to 0$,
		\item $\displaystyle\int_{T}| f(t,x_\nu(t)) - f(t,x)|
		d\mu(t)\to 0$.
	\end{enumerate}
	  If the space $X$ is reflexive we can take sequences instead
	of nets and the convergence of $\displaystyle\int_{T} x_\nu^*(t) d\mu(t)$ will be in the 
	norm topology.
\end{corollary}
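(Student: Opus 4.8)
The plan is to obtain both implications from Theorem~\ref{theoremsubdiferentialinfinity}, using two elementary reductions: since $f\ge 0$ everywhere, the auxiliary integrand $g\equiv 0$ satisfies \cref{P1P2} with $\sup_{u}\|\nabla g(\cdot,u)\|=0\in\textnormal{L}^1(T,\mathbb{R})$, so the theorem applies; and since each $f_t$ (resp.\ $\Intf{f}$) is convex lsc, its Fr\'echet subdifferential coincides with its convex subdifferential, i.e.\ $\Asub f(t,\cdot)=\sub f(t,\cdot)$ and $\Asub\Intf{f}(x)=\sub\Intf{f}(x)$.

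\emph{Necessity.} Fix $x^*\in\sub\Intf{f}(x)$; then $\Intf{f}(x)<\infty$ and $x^*\in\Asub\Intf{f}(x)$, so for every $w^*$-continuous seminorm $\rho$ on $X^*$, Theorem~\ref{theoremsubdiferentialinfinity} produces sequences satisfying its items (a)--(f). I would then build a net indexed by the directed set $I$ of pairs $(\rho,k)$, where $\rho$ ranges over the $w^*$-continuous seminorms (directed by the pointwise order, which is directed since $\max(\rho_1,\rho_2)$ is again such a seminorm) and $k\in\mathbb{N}$, with $(\rho_1,k_1)\preceq(\rho_2,k_2)$ iff $\rho_1\le\rho_2$ and $k_1\le k_2$. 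For the index $(\rho,k)$, from the sequence attached to $\rho$ I pick $n=n(\rho,k)$ so large that $\|x-x_n(\cdot)\|_\infty<1/k$, $\rho(\int_T x_n^*(t)d\mu(t)-x^*)<1/k$, $|\int_T\langle x_n^*(t),x_n(t)-x\rangle d\mu(t)|<1/k$, and $\int_T|f(t,x_n(t))-f(t,x)|d\mu(t)<1/k$, and I set $(x_{(\rho,k)},x^*_{(\rho,k)}):=(x_n,x_n^*)$. Items (a), (b), (d), (e) of the corollary then hold along this net (the three displayed quantities are $<1/k$ and $k$ is cofinal in $I$, while (a) holds for each index because $\Asub f(t,\cdot)=\sub f(t,\cdot)$), and (c) holds because every $w^*$-neighbourhood of $x^*$ contains a set $\{z^*:\sigma(z^*-x^*)<\varepsilon\}$ for some $w^*$-continuous seminorm $\sigma$, so for $(\rho,k)\succeq(\sigma,\lceil 1/\varepsilon\rceil)$ one gets $\sigma(\int_T x^*_{(\rho,k)}d\mu-x^*)\le\rho(\int_T x^*_{(\rho,k)}d\mu-x^*)<1/k\le\varepsilon$; hence $\int_T x^*_\nu d\mu\overset{w^*}{\to}x^*$.

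\emph{Sufficiency.} Given nets as in (a)--(e), I would use the convex subgradient inequality: for each $\nu$, a.e.\ $t$, and every $y\in X$,
\[
\langle x_\nu^*(t),y-x_\nu(t)\rangle\le f(t,y)-f(t,x_\nu(t)).
\]
Integrating over $T$ (the left side is integrable since $\|x_\nu^*(\cdot)\|\in\textnormal{L}^1$ and $\|y-x_\nu(\cdot)\|_\infty<\infty$) and writing $y-x_\nu(t)=(y-x)+(x-x_\nu(t))$ yields
\[
\langle \int_T x_\nu^*(t)d\mu(t),\,y-x\rangle-\int_T\langle x_\nu^*(t),x_\nu(t)-x\rangle d\mu(t)\le \Intf{f}(y)-\int_T f(t,x_\nu(t))d\mu(t).
\]
Passing to the limit along the net, the first term tends to $\langle x^*,y-x\rangle$ by (c), the second to $0$ by (d), and $\int_T f(t,x_\nu(t))d\mu(t)\to\Intf{f}(x)$ by (e); a short computation using $f\ge 0$ and (a) shows $\Intf{f}(x)<\infty$, so this limit is genuine and $\Intf{f}$ is proper convex. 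Hence $\langle x^*,y-x\rangle\le\Intf{f}(y)-\Intf{f}(x)$ for all $y\in X$, i.e.\ $x^*\in\sub\Intf{f}(x)$.

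\emph{The reflexive case, and the main obstacle.} When $X$ is reflexive, each ball $\mathbb{B}(x,\eta)$ is weakly sequentially compact and each $f_t$, being convex lsc, is weakly lsc, so condition (b) of Proposition~\ref{Prop:SuficientCondition} holds with $\tau$ the weak topology and $B=\mathbb{B}(x,\eta)$; by Remark~\ref{REMARK:CONVERGENCENORM}, the conclusion of Theorem~\ref{theoremsubdiferentialinfinity} then holds with $\int_T x_n^*d\mu\to x^*$ in the norm topology, for a single sequence, which gives the sequential refinement of necessity, while sufficiency is unchanged since norm convergence implies $w^*$-convergence. The hard part will be the bookkeeping in the necessity step: turning the family of ``per-seminorm'' sequences of Theorem~\ref{theoremsubdiferentialinfinity} into one net simultaneously realizing all of (a)--(e) and checking that the $w^*$-convergence in (c) is actually forced by the directedness of $I$; the sufficiency direction and the reflexive sharpening are then routine, modulo the small verification that $\Intf{f}(x)<\infty$.
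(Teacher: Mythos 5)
Your proposal is correct and follows essentially the same route as the paper: a net indexed by a directed set combining $\mathbb{N}$ with the $w^*$-neighbourhood system of zero (your seminorm--$1/k$ indexing is an equivalent formulation of the paper's $\mathbb{A}=\mathbb{N}\times\mathcal{N}_0$), sufficiency via integrating the convex subgradient inequality and passing to the limit using (c)--(e), and the reflexive refinement via \cref{Prop:SuficientCondition} together with \cref{REMARK:CONVERGENCENORM} (the paper forces condition (a) by adding $\delta_{\mathbb{B}(x,1)}$ to $f_t$, while you invoke condition (b) with $B=\mathbb{B}(x,\eta)$ — the same weak-compactness idea). The ``main obstacle'' you anticipate is exactly the bookkeeping the paper carries out, and it goes through as you describe.
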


\begin{proof}
	 The construction of the net follows similar and classical arguments. Indeed, consider  $x^* \in \sub \Intf{f}(x)$. Then take $\mathcal{N}_{0}$ the neighborhood system of zero for the $w^*$-topology and consider the  set $\mathbb{A}:=\N \times\mathcal{N}_{0}$, ordered by  $(n_1, U_1) \leq (n_2, U_2)$ if and only if $n_1 \leq n_2$ and $U_2 \subseteq U_1$. 
	 
	 Then by   \cref{theoremsubdiferentialinfinity}  we have that for every $\nu=(n,U)$   there are  $x_\nu  \in \textnormal{L}^\infty({T},X)$ and $x^*_\nu \in \textnormal{L}^1({T},X^*)$ such that 
		\begin{enumerate}[label=(\arabic*),ref=(\arabic*)]
			\item $x_\nu^*(t) \in \sub f(t,x_\nu(t))$ ae,
			\item $ \| x - x_\nu(\cdot)\|_\infty \leq 1/n$,
			\item $ \int\limits_T \langle  x_\nu^*(t),  x_\nu(t)- x\rangle d\mu(t) \to 1/n$,
			\item $  \int x_\nu^*(t) d\mu(t) - x^* \in U$,
			\item $ \int_{T}| f(t,x_\nu(t)) - f(t,x)| d\mu(t)\to 0$.
		\end{enumerate}
	Hence, the net $(x_\nu,x_\nu^*)$ satisfies the required properties. Conversely, assume that the net $(x_\nu,x_\nu^*)$ satisfies  the above properties. Then  for all $y \in X$
	\begin{align*}
		\langle x^* , y - x \rangle \leq& 	\langle x^* - \int_{T} x^*_\nu(t) d\mu(t) , y - x \rangle  +  \int_{T}  \langle  x^*_\nu(t), y - x_\nu(t) \rangle d\mu(t)   \\&+\int_{T}  \langle  x^*_\nu(t), x_\nu(t) - x \rangle  d\mu(t)\\
		\leq& \langle x^* - \int_{T} x^*_\nu(t) d\mu(t) , y - x \rangle + \Intf{f}(y) - \int_{T} f(t,x_\nu(t))\\& + \int_{T}  \langle  x^*_\nu(t), x_\nu(t) - x \rangle  d\mu(t).
	\end{align*}
	So, taking the limits we conclude	$\langle x^* , y - x \rangle \leq \Intf{f}(y) -\Intf{f}(x)$, since $y$  is arbitrary we get the result.
	
	Finally, when  $X$ is reflexive, without loss of generality, we can assume that  criterion \cref{Prop:SuficientConditiona} of  \cref{Prop:SuficientCondition} is satisfied; otherwise, we take $\tilde{f}_t:=f_t + \delta_{\mathbb{B}(x,1)}$.  Then, by \cref{REMARK:CONVERGENCENORM}, we can  construct  a sequence with the desired property using the norm instead of a family of seminorms.
\end{proof}

\begin{remark}
	It is worth comparing the results given in \cite[Theorem 1.4.2]{MR2444461}
	with  \cref{corollary:convex}:
	Let us recall that a
	functional $\lambda ^{\ast }\in \textnormal{L}^{\infty }(T,X)^{\ast }$ is called singular
	if there exists a sequence of measurable sets $T_{n}$ such that $
	T_{n+1}\subseteq T_{n}$, $\mu (T_{n})\rightarrow 0$ as $n\rightarrow \infty $
	and $\lambda ^{\ast }(g\mathds{1}_{T^c_{n}})=0$ for every $g\in \textnormal{L}^{\infty
	}(T,X)$. The set of all singular elements is denoted by  $\textnormal{L}^{\textnormal{sing}}(T,X)$. It is well-known that the dual of  $
\textnormal{L}^{\infty }(T,X)$ can be  represented  as the direct sum of $\textnormal{L}^1(T,X^\ast)$ and $\textnormal{L}^{\textnormal{sing}}(T,X)$
(see, for example, \cite{MR0467310,0036-0279-30-2-R03}). 
	
	In \cite[Theorem 1.4.2]{MR2444461} the authors proved similar characterizations of the subdifferential of $\Intf{f}$, where they established that $x^*\in \sub \Intf{f}(x)$
	if and only if there are nets $x_\nu \in \textnormal{L}^\infty({T},X)$, $x_\nu^*\in \textnormal{L}^1({T},X^*)$  and $\lambda^\ast_\nu \in \textnormal{L}^{\textnormal{sing}}(T,X)$ such that
	
		\begin{enumerate}[label=(\alph*)]
		\item   $x_\nu^*(t) \in \sub 	f(t,x_\nu(t))$ ae, 
		\item $\| x - x_\nu(\cdot)\|_\infty \to 0 $,
		\item $\displaystyle\int_{T}
		x_\nu^*(t) d\mu(t) + A^\ast (\lambda^\ast_\nu) \overset{w^*}{\to} x^* $,
		\item $\displaystyle\int_{{T}}\langle x^*_\nu(t),
		x_\nu(t)- x\rangle d\mu(t) + \lambda^\ast_\nu ( x^*_\nu(\cdot ) -A(x)) \to 0$,
		\item $\displaystyle\int_{T}| f(t,x_\nu(t)) - f(t,x)|
		d\mu(t)\to 0$,
	\end{enumerate}
	where $A: X\to \textnormal{L}^\infty(T,X)$ is the linear functional given by $A(x):=x\1_T$ and $A^\ast$ denotes its adjoint. Furthermore, the functionals $\lambda_\nu $ belong to  the normal cone of $ \IntfLp{f}^\infty$ at the constant function $A(x)$.
	In other words, we have extended to the non-convex case this class of results by using the Fr\'echet subdifferential. Also, our characterizations  of the subdifferential of $\Intf{f}$ are tighter when the integrand is convex, since that we do not require the use of  \emph{singular elements} from the dual of $\textnormal{L}^{\infty}({T},X)$.
\end{remark}

\section{Limiting and Clarke-Rockafellar subdifferentials}

\label{Section:Limitingsub}  The aim of this section is to establish
upper-estimates for the limiting and Clarke-Rockafellar
subdifferentials at a point $x\in \dom \Intf{f}$, in terms of the corresponding
subdifferential of the data function $f_t$ at the same point. We will focus on the case when $X$ is a separable Asplund space.

In view of the  results of the last section, we need to ensure the boundedness of
the approximate sequences involved in the previous formulas of the subdifferential in order to establish upper-estimates, which are expressed at the exact point. So, the next part concerns criteria to guarantee this
property. For this reason, we introduce the following definitions that allow
us to extend the classical results, which generally consider some local Lipschitz
continuity property of the integral functional (see for instance \cite[Theorem 2.7.2]%
{MR1058436} or \cite{MR3783778}).

We introduce the concept of $w^*$-compact soles (see \cite[Proposition 2.1]%
{cornet2002fatou}).

\begin{definition}[Integrable compact sole]
	\label{Def:INTCOMPACT}  Consider a measurable multifunction $C: {T}
	\rightrightarrows X^*$ with non-empty closed values. \newline
	(i) We say that $C$ has
	an integrable compact sole if there exist $e \in X$ and $\gamma >0$
	such that for every measurable selection $c^*$ of $C$  
	\begin{align*}
		\gamma \langle c^*(t) , e\rangle \geq \|c^*(t) \| \;ae.
	\end{align*}
	(ii) We denote $$UI(C) :=\{ u \in X:\sigma_{C(\cdot)}(u)^{+} \in 
	\textnormal{L}^1(T,\mathbb{R})\},$$ where $\sigma_{C(t)}(u)^{+}:=\max\{ \sigma_{C(t)}(u),0  \}$.
\end{definition}
Basically, the set $UI(C) $ denotes the directions for which all the measurable selections are uniformly integrable.

In order to understand better this notion, we include in the Appendix a characterization of the integrable compact sole property in terms of the primal space  (see \cref{characterizationofcompactsole}).
\begin{theorem}
	\label{MordukhovichSublimitingaExact} Let $x \in \dom \Intf{f}$ and suppose there
	exist $\epsilon >0$, a measurable multifunction $C :{T} \rightrightarrows X^*$
	which has an integrable compact sole, and an integrable function $K(\cdot) >0$
	such that  
	\begin{align}  \label{eq:boundedsub}
		\Asub f(t, x^{\prime }) \subseteq K(t) \mathbb{B} + C(t), \forall
		x^{\prime }\in \mathbb{B}(x,\epsilon),\;\forall t\in {T}.
	\end{align}
	Then  
	\begin{align}
		\sub  \Intf{f}(x) \subseteq&\bigcap\bigg\{ \int\limits_T \sub f(t,x)d\mu(t) +
		UI(C)^{-}+ W^\perp\bigg\},  \label{EQ:1} \\
		\sub^{\infty}\Intf{f}(x) \subseteq& \bigcap\bigg\{ \int_{T} \sub^\infty f(t,x) d\mu(t) +UI(C)^{-}+ W^\perp\bigg\},  \label{EQ:2}
	\end{align}
	where the intersection is over all finite-dimensional subspaces $W \subseteq
	X$. Consequently,  
	\begin{align}\label{clarke}
		\sub_C \Intf{f}(x) \subseteq \cco^{w^*}\bigg\{ \int\limits_{{T}} \sub f(t,x)
		d\mu + \int_{T} \sub^\infty f(t,x) d\mu(t) +UI(C)^{-} \bigg\}.
	\end{align}
\end{theorem}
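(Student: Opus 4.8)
The plan is to combine the sequential formula of \cref{theoremsubdiferentialinfinity} with the boundedness provided by the integrable compact sole hypothesis. Fix $x^* \in \sub \Intf{f}(x)$. By definition of the limiting subdifferential there are points $x_k \overset{\Intf{f}}{\to} x$ and Fr\'echet subgradients $u_k^* \in \Asub \Intf{f}(x_k)$ with $u_k^* \overset{w^*}{\to} x^*$. Since $x_k \to x$, eventually $x_k \in \mathbb{B}(x,\epsilon)$, so I can apply \cref{theoremsubdiferentialinfinity} to each $u_k^* \in \Asub \Intf{f}(x_k)$; the key point is that, after a diagonal argument over the seminorms $\rho$ and over $k$, one produces a single sequence $x_n^* \in \textnormal{L}^1(T,X^*)$ with $x_n^*(t) \in \Asub f(t,x_n(t))$ ae, $\|x_n(\cdot)-x\|_\infty \to 0$, $\int_T x_n^*(t)\,d\mu(t) \overset{w^*}{\to} x^*$, $\int_T \langle x_n^*(t), x_n(t)-x\rangle\,d\mu(t) \to 0$, and $\int_T |f(t,x_n(t))-f(t,x)|\,d\mu(t) \to 0$. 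Eventually $x_n(t) \in \mathbb{B}(x,\epsilon)$ ae, so \cref{eq:boundedsub} gives the decomposition $x_n^*(t) = K(t)b_n^*(t) + c_n^*(t)$ with $b_n^*(t) \in \mathbb{B}$ and $c_n^*(t) \in C(t)$ measurable (measurability of the selections follows from \cref{measurableselectiontheorem} applied to the graph-measurable multifunction $t \rightrightarrows (\mathbb{B}\times C(t)) \cap \{(b^*,c^*): K(t)b^* + c^* = x_n^*(t)\}$).

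Next I would exploit the integrable compact sole: with $e \in X$ and $\gamma>0$ as in \cref{Def:INTCOMPACT}(i), one has $\|c_n^*(t)\| \le \gamma \langle c_n^*(t), e\rangle$ ae, and since $\langle c_n^*(t),e\rangle = \langle x_n^*(t),e\rangle - K(t)\langle b_n^*(t),e\rangle$ is integrable (because $x_n^*(\cdot) \in \textnormal{L}^1$, $K \in \textnormal{L}^1$), the function $t \mapsto \|c_n^*(t)\|$ is integrable, hence so is $\|K(\cdot)b_n^*(\cdot)\| \le K(\cdot)$. Then $\int_T c_n^*(t)\,d\mu(t) \in \int_T C(t)\,d\mu(t)$. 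Now fix a finite-dimensional subspace $W \subseteq X$ and apply the projection/annihilator trick already used in \cref{theoremsubdiferential}: using the seminorm $\rho_W$ whose unit ball polar spans $W$, the convergence $\int_T x_n^*(t)\,d\mu(t) \overset{w^*}{\to} x^*$ refines, on $W$, to the statement that $x^* - \int_T x_n^*(t)\,d\mu(t) \in W^\perp + o(1)$. Writing $\int_T x_n^*(t)\,d\mu(t) = \int_T K(t)b_n^*(t)\,d\mu(t) + \int_T c_n^*(t)\,d\mu(t)$, the first term is bounded (by $\int_T K\,d\mu$) hence has a $w^*$-convergent subnet to some $v^* \in \overline{\int_T K(t)\mathbb{B}\,d\mu(t)}^{w^*}$, and I claim $v^* \in \int_T \sub f(t,x)\,d\mu(t) + \cco^{w^*}(\cdots)$ by passing to the limit in $x_n^*(t) \in \Asub f(t,x_n(t))$ together with $x_n(t)\to x$: this is where the upper semicontinuity of the limiting subdifferential and a Fatou-type lemma for the integrands (using $\int_T|f(t,x_n(t))-f(t,x)|\,d\mu \to 0$) enter, yielding that the $w^*$-limit decomposes into a part in $\int_T \sub f(t,x)\,d\mu(t)$ (carried by the directions where the selections stay uniformly integrable) and a recession part absorbed into $UI(C)^-$. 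The residual term $\int_T c_n^*(t)\,d\mu(t)$ lands in $\int_T C(t)\,d\mu(t)$, whose $w^*$-closed convex hull, after projecting away $W^\perp$, sits inside $UI(C)^-$ because $\sigma_{C(t)}(u) \le 0$ ae for $u \in UI(C)$ forces $\langle \int_T c_n^*\,d\mu, u\rangle \le 0$ in the limit; this is precisely \cref{EQ:1}. The estimate \cref{EQ:2} for $\sub^\infty \Intf{f}(x)$ is obtained in exactly the same way, replacing $u_k^* \in \Asub \Intf{f}(x_k)$ by $\lambda_k u_k^*$ with $\lambda_k \to 0^+$ and tracking that the $K(t)b_n^*(t)$ part now vanishes in the limit (its norm is $O(\lambda_k)\int_T K\,d\mu \to 0$), leaving only the $\sub^\infty f(t,x)$ contribution and the cone $UI(C)^- + W^\perp$.

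Finally, \cref{clarke} follows by purely formal manipulation: by the definition \cref{definition:clarke}, $\sub_C \Intf{f}(x) = \cco(\sub \Intf{f}(x) + \sub^\infty \Intf{f}(x))$, and substituting \cref{EQ:1}--\cref{EQ:2}, taking the $w^*$-closed convex hull, and observing that $\cco^{w^*}$ absorbs the cone $UI(C)^-$ (it is already convex and, being a polar cone, $w^*$-closed) and also absorbs $\bigcap_W W^\perp = \{0\}$, one arrives at the stated inclusion inside $\cco^{w^*}\{\int_T \sub f(t,x)\,d\mu + \int_T \sub^\infty f(t,x)\,d\mu(t) + UI(C)^-\}$. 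The main obstacle I anticipate is the passage to the limit in step two — justifying rigorously that the $w^*$-limits of $\int_T x_n^*(t)\,d\mu(t)$, built from selections $x_n^*(t) \in \Asub f(t,x_n(t))$ with only $\textnormal{L}^1$ control and no uniform integrability in general directions, decompose as an element of $\int_T \sub f(t,x)\,d\mu(t)$ modulo $UI(C)^- + W^\perp$; this requires a careful Fatou-type / Dunford--Pettis argument on the directions in $UI(C)$ together with the compact-sole bound to control the remaining directions, and it is the technical heart that the Appendix lemmas (the characterization \cref{characterizationofcompactsole} in particular) are designed to supply.
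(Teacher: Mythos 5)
Your overall architecture is the same as the paper's: reduce a limiting (resp.\ singular) subgradient to a diagonal sequence produced by \cref{theoremsubdiferentialinfinity} (this is precisely \cref{MordukhovichSub} in the Appendix), decompose $x_n^*(t)=h_n(t)+c_n^*(t)$ with $h_n(t)\in K(t)\mathbb{B}$ and $c_n^*(t)\in C(t)$ by measurable selection, use the compact--sole inequality $\|c_n^*(t)\|\le\gamma\langle c_n^*(t),e\rangle$ to bound $\int_T\|x_n^*(t)\|\,d\mu(t)$ \emph{uniformly in $n$} (for this you must also place $e$ inside $W$, so that $\int_T\langle x_n^*(t),e\rangle\,d\mu(t)$ converges by the $\rho$-convergence; you never say this), and then pass to the limit. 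The gap is exactly the limit passage, which you yourself flag as ``the main obstacle'' and do not carry out. In the paper this step is a vector-valued Fatou lemma (the ``moreover'' part of \cref{MordukhovichSub}, imported from the literature): for an $\textnormal{L}^1$-bounded sequence one has $\Ls^{w^*}\{\int_T x_n^*\,d\mu\}\subseteq\int_T\Ls^{w^*}\{x_n^*(t)\}\,d\mu+C((x_n^*))^-+W^\perp$, where $C((x_n^*))$ is the set of directions $u$ along which $(\langle x_n^*(\cdot),u\rangle^+)_n$ is uniformly integrable; one then verifies $\Ls^{w^*}\{x_n^*(t)\}\subseteq\sub f(t,x)$ ae and $UI(C)\subseteq C((x_n^*))$. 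Without some such lemma the theorem is not proved; a Dunford--Pettis argument alone does not produce the $\int_T\sub f(t,x)\,d\mu$ term.

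Moreover, the one place where you do try to justify the appearance of $UI(C)^-$ is incorrect. You assert that $\sigma_{C(t)}(u)\le 0$ ae for $u\in UI(C)$, hence $\langle\int_T c_n^*\,d\mu,u\rangle\le 0$. But \cref{Def:INTCOMPACT}(ii) only requires $\sigma_{C(\cdot)}(u)^+\in\textnormal{L}^1$, not nonpositivity: take $\mu$ finite and $C(t)\equiv\{v^*\}$ with $v^*\ne 0$ (which has an integrable compact sole); then $UI(C)=X$, so $UI(C)^-=\{0\}$, while $\int_T c_n^*\,d\mu=\mu(T)v^*\ne 0$. The correct mechanism is the opposite of the one you describe: the $C(t)$-component is \emph{not} discarded into $UI(C)^-$ --- it travels with the full selection $x_n^*(t)$ into the pointwise $w^*$-upper limit, which lies in $\sub f(t,x)$ (resp.\ $\sub^\infty f(t,x)$ after scaling by $\lambda_n$); what lands in $UI(C)^-$ is only the Fatou defect term, and it does so because \cref{eq:boundedsub} gives every $u\in UI(C)$ the $n$-independent integrable majorant $\langle x_n^*(t),u\rangle^+\le K(t)\|u\|+\sigma_{C(t)}(u)^+$, whence $UI(C)\subseteq C((x_n^*))$ and $C((x_n^*))^-\subseteq UI(C)^-$. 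Splitting the limit by hand into a ``pointwise part from $K(t)b_n^*$'' plus a ``residual from $c_n^*$'' both loses the contribution of $c_n^*$ to $\sub f(t,x)$ and places $\int_T c_n^*\,d\mu$ in a set to which it need not belong. The formal deduction of \cref{clarke} from \cref{EQ:1}--\cref{EQ:2} is fine.
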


\begin{proof}
	Let $x^* \in \sub \Intf{f}(x)$ and $y^* \in \sub^\infty \Intf{f}(x)$. Consider a finite family of linearly independent points $\{ e_i\}_{i=1}^p$,  $W:=\spn \{ e_i\}$ and  $\rho(\cdot) :=\max\{ | \langle \cdot , e_i  \rangle | \}$. Then by the auxiliary result \cref{MordukhovichSub}, proved in \cref{Section_Fatou_type_MordukhovichSub}, there exist sequences $x_n,y_n \in \textnormal{L}^\infty({T},X)$, $x_n^*, y_n^*\in \textnormal{L}^1({T},X^*)$  and $\lambda_n \to 0^+$ such that:
	\begin{enumerate}[label={(\roman*)},ref={(\roman*)}]
			\item $x_n^*(t) \in \Asub f(t,x_n(t))$ ae,
			\item $\| x - x_n(\cdot)\|_\infty \to 0 $,
			\item $\rho( \int_{\T} x_n^*(t) d\mu(t) - x^*)  \to 0$,
			\item $ \lim \int_{\T} | f(t,x_n(t)) - f(t,x)|  d \mu(t)\to 0$,
		\end{enumerate}
	
		\begin{enumerate}[label={(\roman*$^\infty$)},ref={(\roman*$^\infty$)}]
			\item $y_n^*(t) \in \Asub f(t,y_n(t))$ ae,
			\item $\| x - y_n(\cdot)\|_\infty  \to 0 $,
			\item\label{conve:c} $\rho(\lambda_ n \cdot  \int_{\T} y_n^*(t) d\mu(t) - y^*) \to 0$,
			\item $ \lim \int_{\T} | f(t,y_n(t)) - f(t,x)|  d \mu(t)\to 0$.
		\end{enumerate}	
	Hence, (for large enough $n$) relation \cref{eq:boundedsub} implies that  $x_n^*(t) \in K(t)\mathbb{B} +C(t)$ and $y_n^*(t)\in K(t)\mathbb{B} + C(t)$ ae. Now, consider the multifunctions 
	\begin{align*}
		\mathcal{G}_1(t)&:=\{  (a^\ast,b^\ast) \in  K(t)\mathbb{B} \times  C(t) : 	x_n^\ast(t) = a^\ast + b^\ast	\},\\
		\mathcal{G}_2(t)&:=\{  (a^\ast,b^\ast) \in  K(t)\mathbb{B} \times  C(t) : 	y_n^\ast(t) = a^\ast + b^\ast	\},
		\end{align*}
	 which are graph measurable (see, e.g., \cite[Proposition 1.41, Proposition 1.43 and Remark 1.44]{MR1485775}).	Therefore, by the \emph{measurable selection theorem} (see \cref{measurableselectiontheorem})  there are measurable selections $h^1_n(t),h^2_n(t)  \in \mathbb{B}(0, K(t))$ and $c^1_n(t),c_n^2(t) \in C(t)$ such  that $x_n^*(t)=h^1_n(t) +  c^1_n(t)$ and $ y_n^*(t)=h^2_n(t) +  c^2_n(t)$. From the fact that  $C$ has an integrable compact sole, there exist $e \in X$ and $\gamma >0$ such that $\| c_n^i(t) \| \leq \gamma \langle c_n^i(t) ,e\rangle$ for $i=1,2$ and almost all $t\in T$. Then
	\begin{equation}
	\begin{aligned}\label{eq:UI}
	\displaystyle \int_{T} \| x^*_ n \| d\mu &\leq \int_{T} K d\mu + \gamma \int_{T} \langle c^1_n(t) ,e\rangle d\mu(t) \\&=  \int_{T} K(t) d\mu(t)  + \gamma\big( \int_{T} \langle x_n^*(t) ,e\rangle d\mu(t) - \int_{T} \langle h^1_n(t) ,e\rangle d\mu(t\big)\\
	&\leq (1+\| e\|)\int_{T} K(t) d\mu(t)  + \gamma \int_{T} \langle x_n^*(t) ,e \rangle d\mu(t).
	\end{aligned}
	\end{equation}
	So, assuming that $e\in W$, the sequence $(x^*_n)$ is bounded in $\textnormal{L}^1({T},X^*)$  and, obviously, the same holds for the sequence $(\lambda_n y_n^*)$.  Then, by  \cref{MordukhovichSub}, and observing that $C(x_n^\ast) \subseteq UI(C)^{-}$ and $C(\lambda_n y_n^\ast) \subseteq UI(C)^{-}$ (see the notation in \cref{MordukhovichSub}),
	$$x^* \in \displaystyle\int\limits_T \sub f(t,x)d\mu(t) + UI(C)^{-}+ W^\perp$$ and 
	$$y^* \in \displaystyle\int\limits_T \sub^\infty f(t,x)d\mu(t) + UI(C)^{-}+ W^\perp.$$	
	 Since $W$ was chosen arbitrary,   \cref{EQ:1,EQ:2} follow. Finally, \cref{clarke} follows from \cref{definition:clarke}.
\end{proof}

\begin{remark}
	When the measurable function $C$ has cone values, it is easy to see that \cref{eq:boundedsub} implies that for all $t\in {T}$, $\sub^\infty f(t,x) \subseteq C(t)$ and $UI(C)=\{ u \in X : u \in C^-(t) 
		\text{ ae} \}$. In addition, if the values of $C$ are also $w^*$-closed and
		convex, then the integrable compact sole property can be understood in terms
		of the negative polar set $C^-(t)$ (see  \cref{compactsoloiffnonemptyinterior}). The most simple case is when $C$
		is a fixed $w^*$-closed convex cone; in this case,  \cref{compactsoloiffnonemptyinterior} characterizes the compact sole property as an  interior non-emptiness condition of the polar cone $C^-(\subseteq X)$. In particular, when the cone $C(t)=C=\{ 0\}$, we have that  \cref{eq:boundedsub} implies that for almost all $t\in \T$ the function $f_t$ is  Lipschitz continuous on $\mathbb{B}(x,\epsilon)$ with constant $K(t)$ (see, e.g., \cite[Theorem 3.52]{MR2191744}), and consequently this  recovers the classical framework of a normal integrand which is Lipschitz continuous (see, e.g., \cite[Theorem 2.7.2]{MR1058436}).
	\end{remark}

The next result corresponds to the explicit case when the measurable function $C$ in \cref{eq:boundedsub} is a fixed $w^\ast$-closed convex cone.
\begin{corollary}
	\label{MordukhovichSublimitingaExactcorolary} In the setting of  \cref{MordukhovichSublimitingaExact}, we assume that the multifunction $C$ is a
	constant $w^\ast$-closed convex  cone. Then  
	\begin{align*}
		\sub \Intf{f}(x) \subseteq\bigcap\big\{ \int\limits_T \sub f(t,x)d\mu(t) +
		C+ W^\perp\big\};\; \text{ and }\; \sub^{\infty}\Intf{f}(x) \subseteq& C,
	\end{align*}
	where the intersection is over all finite-dimensional subspaces $W \subseteq
	X$. Consequently, 
	\begin{align*}
		\sub_C \Intf{f}(x) \subseteq \cco^{w^*}\bigg\{ \int\limits_{{T}} \sub f(t,x)
		d\mu + C\bigg\}.
	\end{align*}
\end{corollary}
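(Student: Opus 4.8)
The plan is to specialize \cref{MordukhovichSublimitingaExact} to the case where $C(t)\equiv C$ is a fixed $w^\ast$-closed convex cone, and then simplify the three terms appearing in the conclusions \cref{EQ:1,EQ:2,clarke}. First I would verify that $C$, being a constant closed convex cone, automatically has nonempty closed values and is (trivially) measurable, so that \cref{MordukhovichSublimitingaExact} applies verbatim under the standing hypothesis \cref{eq:boundedsub}; the integrable compact sole hypothesis is now exactly the assumption that $C^-\subseteq X$ has nonempty interior in the appropriate sense, as recorded in the preceding remark and in \cref{compactsoloiffnonemptyinterior}. The point is that nothing new has to be proved about applicability — the work is entirely in identifying $UI(C)^-$.

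Next I would compute $UI(C)$ when $C(t)\equiv C$ is a cone. Since $\sigma_C(u)=0$ if $u\in C^-$ and $\sigma_C(u)=+\infty$ otherwise, the truncation $\sigma_C(u)^+$ is the indicator-type function $0\cdot\mathds{1}_{C^-}(u)+(+\infty)\cdot\mathds{1}_{X\setminus C^-}(u)$; hence $\sigma_{C(\cdot)}(u)^+\in\textnormal{L}^1(T,\R)$ exactly when $u\in C^-$, i.e.\ $UI(C)=C^-$. Taking negative polars and using the bipolar theorem for the $w^\ast$-closed convex cone $C$, we get $UI(C)^-=(C^-)^-=C$. This is the key identification and it instantly turns $\int_T\sub f(t,x)\,d\mu(t)+UI(C)^-+W^\perp$ into $\int_T\sub f(t,x)\,d\mu(t)+C+W^\perp$, which gives the first inclusion.

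For the singular part I would argue that $\int_T\sub^\infty f(t,x)\,d\mu(t)\subseteq C$: indeed, as noted in the remark, \cref{eq:boundedsub} with $C(t)\equiv C$ forces $\sub^\infty f(t,x)\subseteq C$ for every $t$ (the singular subdifferential is a cone, and the only way a ray can be swallowed by $K(t)\mathbb{B}+C$ is to lie in $C$), so any measurable selection takes values in the convex cone $C$, and its Bochner integral — being a limit of convex combinations scaled appropriately — stays in $C$ because $C$ is $w^\ast$-closed and convex. Combining this with $UI(C)^-=C$ and $C+C=C$, the bound \cref{EQ:2} collapses to $\sub^\infty\Intf{f}(x)\subseteq\bigcap\{C+W^\perp\}$; since $C$ is closed and the intersection over all finite-dimensional $W$ of $C+W^\perp$ equals $C$ (this is where one uses that $X^\ast$ is the union of the $W^\perp$-annihilators' duals — more concretely, a $w^\ast$-limit argument over finite-dimensional projections), we obtain $\sub^\infty\Intf{f}(x)\subseteq C$.

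Finally, the Clarke–Rockafellar inclusion \cref{clarke} specializes directly: substituting $UI(C)^-=C$ and using $\int_T\sub^\infty f(t,x)\,d\mu(t)\subseteq C$ together with $C+C=C$ inside the $w^\ast$-closed convex hull gives $\sub_C\Intf{f}(x)\subseteq\cco^{w^\ast}\{\int_T\sub f(t,x)\,d\mu+C\}$, as claimed. The main obstacle I anticipate is the reduction $\bigcap_W(C+W^\perp)=C$ for the singular subdifferential: one must justify that intersecting the (already closed) cone $C$ with all the "fattened" sets $C+W^\perp$ recovers exactly $C$, which requires a separation/projection argument exploiting that $X$ is Asplund with separable dual so that points outside $C$ can be strictly separated by a functional lying in some finite-dimensional $W$; everything else is bookkeeping with polars and the conventions on $0\cdot\infty$ built into the definition of $UI(C)$.
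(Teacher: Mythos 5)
Your proposal is correct and follows essentially the same route as the paper: the whole corollary reduces to the identification $\sigma_C=\delta_{C^-}$, hence $UI(C)=C^-$ and, by the bipolar theorem, $UI(C)^-=C$, after which \cref{MordukhovichSublimitingaExact} gives the result. The extra steps you spell out (that $\sub^\infty f(t,x)\subseteq C$ under \cref{eq:boundedsub}, that the Bochner integral of selections of the $w^*$-closed convex cone $C$ stays in $C$, and that $\bigcap_W(C+W^\perp)=C$ by a separation argument) are exactly the details the paper leaves implicit, relying on the remark following \cref{MordukhovichSublimitingaExact}.
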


\begin{proof}
	Let us check that  $UI(C)=C^-$. Indeed, since that $\sigma_{C} = \delta_{C^-}$, we have that  
	$\sigma_{C}(u)^+ \in \textnormal{L}^1(T,\R)$ if and only if $u \in C^{-}$, which means that $UI(C)=C^-$. Now, by the Bipolar theorem  (see, e.g., \cite[Theorem 3.38]{MR2766381}), we have that $UI(C)^{-}=C$. Finally, using \cref{MordukhovichSublimitingaExact} we get the result.
	\end{proof}

	The motivation for using the boundedness condition \cref{eq:boundedsub} comes
	from  applications to stochastic programming; more precisely, applications
	to probability constraints (see \cite{MR3273343,MR3594331,Hantoute2018}), where
	the authors impose boundedness conditions over the gradients of the involved
	functions to guarantee the interchange between the sign of the integral and
	the subdifferential.

The following examples show the importance of using the multifunction $C$ in  \cref{MordukhovichSublimitingaExact,MordukhovichSublimitingaExactcorolary}.
\begin{example}
	\label{example:2} Consider the integrand $f:]0,1] \times \mathbb{R}\to
	[0,+\infty)$ given by  
	\begin{equation*}
		f(t,x)=\bigg\{ 
		\begin{array}{ccc}
			x^{3/2} t^{-1+x} & if & x > 0, \\ 
			0 & if & not.%
		\end{array}%
	\end{equation*}
	It is easy to check that $f$ is continuously differentiable with respect to $%
	x$ and  
	\begin{equation*}
		\Intf{f}(x)=\bigg\{ 
		\begin{array}{ccc}
			\sqrt{x} & if & x > 0, \\ 
			0 & if & not.%
		\end{array}%
	\end{equation*}
	Then we easily get $\sub \Intf{f}(0)=[0,+\infty)$, $$\Asub f(t,x)=\bigg\{ 
	\begin{array}{ccc}
	\frac{3}{2} x^{1/2} t^{-1+ x}+ x^{3/2}\ln(t)t^{-1+x} & if & x > 0, \\ 
	0 & if & not,%
	\end{array}%
	$$ and $\sub f(t,x)=\{0\}$. Then we can consider $C=[0,+\infty)$, so that $$\sub %
	\Intf{f}(0)= \int_{]0,1]} \sub f_t(0)d\mu(t) + C =\{ 0\} +[0,+\infty).$$ The
	same example can be modified as 
	\begin{equation*}
		f(t,x)=\bigg\{ 
		\begin{array}{ccc}
			x^{2} t^{-1+x} & if & x > 0, \\ 
			0 & if & not.%
		\end{array}%
	\end{equation*}
	Then one has  
	\begin{equation*}
		\Intf{f}(x)=\bigg\{ 
		\begin{array}{ccc}
			x & if & x > 0, \\ 
			0 & if & not.%
		\end{array}%
	\end{equation*}
	So, the integral functional $\Intf{f}$ is Lipschitz continuous, but it is not true
	that $\sub \Intf{f}(0) =\{0,1\}$ is included in $\int_{]0,1]} \sub f(t,0
	)d\mu(t)=\{0\}$, as in classical results (see \cite[Lemma 6.18]{MR2191745}
	and also \cite{MR3783778} for an extension of this result). However,
	 \cref{MordukhovichSublimitingaExactcorolary} guarantees the
	inclusion $\sub \Intf{f}(0) \subseteq \int_{]0,1]} \sub f(t,0 )d\mu(t) +
	[0,+\infty)$. 
\end{example}

\begin{remark}
	As a final comment we recall that in the finite-dimensional setting two lsc
	functions $f_1,f_2$ satisfy the sum rule inclusion $\sub (f_1 + f_2)(x)
	\subseteq \sub f_1(x) + \sub f(x)$ at a point $x$ provided that the
	asymptotic qualification condition 
	\begin{align*}
	x_1^\ast \in \sub^\infty f_1(x), \; x_2^\ast \in \sub^\infty f_2(x)	 \text{ and }x_1^\ast + x_2^\ast=0 \Rightarrow x_1^\ast=x_2^\ast=0.
		\end{align*}
	holds (see, e.g., \cite{MR2191744,mordukhovich2018variational,MR1491362,MR1710152,MR2144010}). However, the reader can notice that in the above
	example the integrand is continuously differentiable, then the singular
	subdifferential $\sub^\infty f_t(0)=\{ 0\}$ for all $t\in {T}$. In other words, it is
	not possible to recover similar criteria, as in the finite sum, in terms of
	the singular subdifferentials, to get an inclusion of the form $\sub \Intf{f}(x)\subseteq
	\int_{{T}} \sub f_t(x)d\mu(t)$.
\end{remark}
The final result gives criteria for the Lipschitz continuity and differentiability of the function $\Intf{f}$.
\begin{corollary}\label{COROLLARYDIFFE}
	In the setting of  \cref{MordukhovichSublimitingaExactcorolary}, assume that the multifunction $C=\{0\}$. Then $\Intf{f}$ is locally Lipschitz around $x$. In addition, if $X$ is finite-dimensional and $\sub f(t,x')$ is single  valued  ae for all $x'$ in a neighborhood of $x$, then $\Intf{f}$ is continuous differentiable at $x$. 
\end{corollary}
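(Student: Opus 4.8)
The plan is to handle the two assertions separately, reducing each to the estimates of \cref{MordukhovichSublimitingaExactcorolary}, and to use repeatedly that the hypotheses of \cref{MordukhovichSublimitingaExact} are stable under small displacements of the base point: if \cref{eq:boundedsub} holds on $\mathbb{B}(x,\epsilon)$ with $C\equiv\{0\}$, then for every $x'$ with $\|x'-x\|<\epsilon/2$ it holds on $\mathbb{B}(x',\epsilon/2)$ with the same $K(\cdot)$ and $C\equiv\{0\}$ (which trivially admits an integrable compact sole, with $e=0$).

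For the local Lipschitz property I would first note that, with $C=\{0\}$, condition \cref{eq:boundedsub} reads $\Asub f(t,x')\subseteq K(t)\mathbb{B}$ for every $x'\in\mathbb{B}(x,\epsilon)$ and every $t$. Since $\Intf{f}(x)<+\infty$ and $f\geq 0$, we have $f(t,x)<+\infty$ for a.e.\ $t$, and for such $t$ the Asplund mean value inequality \cite[Theorem 3.52]{MR2191744} gives that $f_t$ is finite and $K(t)$-Lipschitz on $\mathbb{B}(x,\epsilon)$. Integrating, for $x',x''\in\mathbb{B}(x,\epsilon)$ the measurable map $t\mapsto|f(t,x')-f(t,x'')|$ is dominated by $K(\cdot)\|x'-x''\|\in\textnormal{L}^1$, which shows that $\Intf{f}$ is finite on $\mathbb{B}(x,\epsilon)$ and
\[
  |\Intf{f}(x')-\Intf{f}(x'')|\leq\int_T|f(t,x')-f(t,x'')|\,d\mu(t)\leq\|K\|_1\,\|x'-x''\|,
\]
i.e.\ $\Intf{f}$ is locally Lipschitz around $x$.

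For the differentiability part, assume $X$ is finite-dimensional and fix a neighborhood $U\subseteq\mathbb{B}(x,\epsilon/2)$ of $x$ such that $\sub f(t,x')$ is a singleton for a.e.\ $t$, for every $x'\in U$. For such an $x'$ I would apply \cref{MordukhovichSublimitingaExactcorolary} at $x'$ (legitimate by the stability remark, and since $\Intf{f}$ is finite near $x'$) and take $W=X$, so $W^\perp=\{0\}$, obtaining $\sub\Intf{f}(x')\subseteq\int_T\sub f(t,x')\,d\mu(t)$. Writing $\sub f(t,x')=\{\nabla f(t,x')\}$ a.e., the selection $t\mapsto\nabla f(t,x')$ is measurable and bounded by $K(\cdot)\in\textnormal{L}^1$, and it is the only $\textnormal{L}^1$-selection of $\sub f(\cdot,x')$ up to a.e.\ equality, so $\int_T\sub f(t,x')\,d\mu(t)$ is the singleton $\{\int_T\nabla f(t,x')\,d\mu(t)\}$. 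Since $\Intf{f}$ is locally Lipschitz and $X$ finite-dimensional, $\sub\Intf{f}(x')\neq\emptyset$, hence $\sub\Intf{f}(x')$ is a singleton for every $x'\in U$; as $\sub^\infty\Intf{f}\equiv\{0\}$ for the locally Lipschitz $\Intf{f}$, \cref{definition:clarke} then gives that $\sub_C\Intf{f}(x')$ (which coincides with Clarke's subdifferential here) is a singleton on $U$. Hence $\Intf{f}$ is strictly differentiable at every point of $U$ \cite[Proposition 2.2.4]{MR1058436}, and a locally Lipschitz function that is strictly differentiable on an open set is continuously differentiable there, which gives $\Intf{f}\in C^1(U)$, in particular at $x$.

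The step I expect to be the main obstacle is the uniqueness-of-selection argument for $\int_T\sub f(t,x')\,d\mu(t)$: one must verify that $\sub f(\cdot,x')$ is a measurable multifunction (so that \cref{measurableselectiontheorem} produces a selection, necessarily integrable by the bound $K$), and that a.e.\ single-valuedness forces all $\textnormal{L}^1$-selections to agree a.e.\ with $t\mapsto\nabla f(t,x')$. Once this is in place the remaining computations are routine.
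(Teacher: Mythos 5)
Your proof is correct; the $C^1$ part follows the paper's argument, while the Lipschitz part takes a genuinely different (and arguably cleaner) route. For local Lipschitz continuity you work at the level of the integrand: \cref{eq:boundedsub} with $C=\{0\}$, together with $f(t,x)<+\infty$ a.e.\ and lower semicontinuity of $f_t$, makes each $f_t$ finite and $K(t)$-Lipschitz on the ball via the subdifferential characterization of Lipschitz continuity, and you then integrate the pointwise estimate to get the modulus $\|K\|_1$. The paper instead works at the level of $\Intf{f}$ itself: it uses the conclusion of \cref{MordukhovichSublimitingaExactcorolary} to bound $\sub_C \Intf{f}$ by $M=\int K\,d\mu$ on a neighborhood of $x$ and then applies Zagrodny's mean value theorem to $\Intf{f}$. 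Your route uses only the hypothesis \cref{eq:boundedsub} rather than the corollary's conclusion, explicitly delivers finiteness of $\Intf{f}$ on a neighborhood (which both arguments need), and is more careful than the paper in recording that $f(t,x)<+\infty$ a.e.\ must be secured before the integrand-level mean value inequality applies. For the differentiability claim your argument coincides with the paper's: a.e.\ single-valuedness of $\sub f(t,\cdot)$ forces $\sub_C\Intf{f}$ to be single-valued on a neighborhood, and Clarke's Proposition 2.2.4 together with its corollary gives $C^1$. The measurability/uniqueness-of-selection point you flag as the main obstacle is real but harmless: in finite dimensions $\sub\Intf{f}(x')\neq\emptyset$ for the locally Lipschitz $\Intf{f}$, so the Aumann integral on the right-hand side of the corollary's inclusion cannot be empty; hence an integrable selection of $\sub f(\cdot,x')$ exists, and a.e.\ single-valuedness makes it unique up to null sets, so the integral is indeed a singleton.
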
 
\begin{proof}
	By  \cref{MordukhovichSublimitingaExactcorolary}, the Clarke subdifferential  $\sub_C \Intf{f}$ is bounded by $M:=\int K(t) d\mu(t)$  in a neighborhood of $x$. Then a straightforward  application of Zagrodny's Mean Value Theorem (see, e.g., \cite[Theorem 4.3]{MR972409}, or  \cite[Theorem 3.52]{MR2191744}) shows that $\Intf{f}$ is Locally Lipschitz around $x$. Furthermore, if $X$ is finite-dimensional and $\sub  f(t,x')$ is single  valued  ae for all $x'$ in a neighborhood of $x$, then $\sub_C \Intf{f}$ is single-valued  for all $x'$ in a neighborhood of $x$, and so \cite[Proposition 2.2.4 and its Corollary]{MR1058436} imply the result.
\end{proof}

\
\section*{Concluding Remarks}
 In this paper, we gave new and explicit formulae for the subdifferential of non-necessarily convex integrals, that are defined on infinite-dimensional Banach spaces. The resulting formulae are given exclusively by means of the corresponding subdifferentials of the integrand functions. All this analysis is done without requiring any qualification conditions.

\section*{Acknowledgments} The authors are  grateful to the  anonymous referees for their valuable remarks and suggestions that have greatly helped to improve this manuscript.

\appendix\label{APPENDIX}
	\section*{Appendix}
			Next, in the last part of this paper we recall some results and we prove differentiability properties of integral functions. We also include here some technical lemmas relying on variational principles, and give necessary conditions for the existence of $p$-robust minima.

			\section{Continuity and differentiability of integral functionals}
			
				We shall need the following lemma, which shows that the convergence of the values of the integral functional implies a stronger  convergence of the values of the data. This result has been proved in  \cite[Lemma 37]{MR3767752} (see also \cite{MR1618939}), but for the sake of completeness we present a proof.
			\begin{lemma}\label{lemma:convergenciaL1}
				Consider $x_n \in \textnormal{L}^p({\T},X)$ such that $x_n \overset{ \textnormal{L}^p}{\to } x$ and  $$\lim\int\limits_T f(t,x_n(t)) d \mu(t)= \int\limits_T f(t,x(t)) d \mu(t)\in\R.$$  Then $	\displaystyle \lim \int\limits_T | f(t,x_n(t)) - f(t,x(t))|  d \mu(t)= 0$.
			\end{lemma}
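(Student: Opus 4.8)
The plan is to exploit the nonnegativity of $f$ together with the almost-everywhere convergence guaranteed by $L^p$ convergence (passing to a subsequence) and then upgrade pointwise control to $L^1$ control via a Fatou-type argument applied to the positive and negative parts of $f(t,x_n(t))-f(t,x(t))$. Since $f\ge 0$, write $g_n(t):=f(t,x_n(t))$ and $g(t):=f(t,x(t))$; both are nonnegative measurable functions, $g\in L^1(T,\R)$ by hypothesis, and $\int g_n\,d\mu\to\int g\,d\mu$. We want $\int |g_n-g|\,d\mu\to 0$. First I would reduce to a subsequence: it suffices to show that every subsequence of $(g_n)$ has a further subsequence along which $\int|g_n-g|\,d\mu\to 0$, so I may assume from the start that $x_n(t)\to x(t)$ for $\mu$-a.e.\ $t$ (a standard consequence of $L^p$ convergence). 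Lower semicontinuity of $f_t$ then yields $\liminf_n g_n(t)\ge g(t)$ a.e.

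The key step is the following elementary inequality: for nonnegative reals, $|a-b| = (a-b) + 2(b-a)^+ = (a-b)+2(b-a)^-{}^{-}$, more precisely $|a-b| = (a-b) + 2\max\{b-a,0\}$. Applying this pointwise with $a=g_n(t)$, $b=g(t)$ gives
\begin{equation*}
\int_T |g_n-g|\,d\mu = \int_T (g_n-g)\,d\mu + 2\int_T (g-g_n)^+\,d\mu .
\end{equation*}
The first term on the right tends to $0$ by hypothesis, so everything reduces to showing $\int_T (g-g_n)^+\,d\mu\to 0$. Now $0\le (g-g_n)^+\le g\in L^1(T,\R)$, and by the a.e.\ convergence plus lower semicontinuity, $\liminf_n g_n(t)\ge g(t)$ a.e., hence $(g(t)-g_n(t))^+\to 0$ for $\mu$-a.e.\ $t$ along the chosen subsequence (since $\limsup_n (g-g_n)^+ = (g-\liminf_n g_n)^+ \le 0$). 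By the dominated convergence theorem, $\int_T (g-g_n)^+\,d\mu\to 0$, which completes the argument for the subsequence, and therefore, by the subsequence principle, for the full sequence.

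I expect the main (though minor) obstacle to be bookkeeping around the $\sigma$-finite, possibly non-finite measure and the equivalence-class nature of $L^p$: one must justify that $L^p$ convergence of $x_n$ to $x$ yields $\mu$-a.e.\ convergence along a subsequence in this generality (true, since $\|x_n-x\|_p\to 0$ implies $\|x_{n_k}(t)-x(t)\|\to 0$ a.e.\ for a suitable subsequence by the usual Borel--Cantelli argument with $\|x_{n_k}-x\|_p\le 2^{-k}$), and that the dominating function $g=f(\cdot,x(\cdot))$ is genuinely in $L^1$, which is exactly the hypothesis $\int_T f(t,x(t))\,d\mu(t)\in\R$ together with $f\ge 0$. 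No qualification condition or Asplund structure is needed here; only nonnegativity, lower semicontinuity of $f_t$, and the convergence of the integrals enter. One should also note the reduction to a subsequence is essential because a priori $(g_n)$ need not converge a.e.\ without it, but the target conclusion is about a numerical limit, so the subsequence principle applies cleanly.
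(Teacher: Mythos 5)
Your proof is correct, and it takes a genuinely different route from the paper's. The paper does not pass to subsequences or use pointwise convergence at all: it invokes the lower semicontinuity of the integral functional $\IntfLp{f}$ on $\textnormal{L}^p({\T},X)$ (a cited external result) at the point $x$, and then applies it to the spliced functions $y:=x_n\1_A+x\1_{A^c}$, which lie in the same $\textnormal{L}^p$-ball around $x$ as $x_n$ for \emph{every} measurable $A$; this yields the lower bound $\int_A f(t,x(t))\,d\mu -\delta/4\le \int_A f(t,x_n(t))\,d\mu$ uniformly over $A\in\mathcal{A}$, which combined with the convergence of the total integrals controls the integrals over $A_n^{+}$ and $A_n^{-}$ separately. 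Your argument instead extracts an a.e.\ convergent subsequence, uses the pointwise lower semicontinuity of $f_t$ to get $\liminf_n f(t,x_n(t))\ge f(t,x(t))$ a.e., and then runs a Scheff\'e-type argument via the identity $|a-b|=(a-b)+2(b-a)^{+}$ and dominated convergence with dominating function $f(\cdot,x(\cdot))\in \textnormal{L}^1$, finishing with the subsequence principle. Your route is more elementary and self-contained (it does not need the lsc of $\IntfLp{f}$ on $\textnormal{L}^p$ as a black box, only Fatou/DCT), at the cost of the subsequence bookkeeping; both arguments use the standing assumption $f\ge 0$ (yours for the domination $(g-g_n)^{+}\le g$, the paper's through the lsc of $\IntfLp{f}$). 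The only blemish is the garbled display $(b-a)^-{}^{-}$ in your statement of the elementary identity, but the corrected version you give immediately afterwards is the one you actually use.
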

			\begin{proof}
				Fix $\delta >0$. Hence, by the lower semicontinuity of $\IntfLp{f}$ in $\textnormal{L}^p({\T},X)$ there exists $\varepsilon  >0$ such that $-\delta/4 + \hat{\Intf{f}}(x) \leq \IntfLp{f}(y(\cdot))$ for every $y \in \mathbb{B}_{\textnormal{L}^p ({\T},X)}(x,\varepsilon)$. Since  $x_n \to x$, there exists $n_1 \in \N$ such that $x_n \in \mathbb{B}_{\textnormal{L}^p({\T},X)}(x,\varepsilon)$ for every $n\geq n_1$. In particular, for every $A\in \mathcal{A}$  and every $n \geq n_1$ the function  $y:=x_n \1_{A} + x\1_{A^c} \in \mathbb{B}_{\textnormal{L}^p({\T},X)}(x,\varepsilon)$, and then   $-\delta/4 + \int_{A}f(t,x(t)) d\mu(t) \leq \int_{A}f(t,x_n(t)) d\mu(t) $ for every $A\in \mathcal{A}$. This yields, for all $A \in \mathcal{A}, $ and all $n \geq n_1$,
				\begin{align*}
					-\delta/4 + \int\limits_{A}f(t,x(t)) d\mu(t) &\leq\int\limits_{A}f(t,x_n(t)) d\mu(t) \\&=  \int\limits_{{\T}}f(t,x_n(t)) d\mu(t)  -  \int\limits_{A^c}f(t,x_n(t)) d\mu(t) \\
					&\leq  \int\limits_{{\T}}f(t,x_n(t)) d\mu(t) - \int\limits_{A^c}f(t,x(t)) + \delta/4.
				\end{align*}
				From the fact that $ \lim\int_{\T} f(t,x_n(t)) d \mu(t)= \int_{\T} f(t,x(t)) d \mu(t)$ there  exist  $n_2 \geq n_1$ such that  $\int_{\T} f(t,x_n(t)) d \mu(t) \leq \int_{\T} f(t,x(t)) d \mu(t) + \delta/4$ for all $n \geq n_2$. Thus, for all $A\in \mathcal{A}$ and all $ n \geq n_2$ 
				\begin{align*}
					-\delta/4 + \int\limits_{A}f(t,x(t)) d\mu(t) &\leq \int\limits_{A}f(t,x_n(t)) d\mu(t)\\ &\leq  \int\limits_{{\T}}f(t,x_n(t)) d\mu(t) - \int\limits_{A^c}f(t,x(t)) + \delta/4\\
					&\leq \int\limits_T f(t,x(t)) d \mu(t) +\delta/4- \int\limits_{A^c}f(t,x(t))  + \delta/4 \\
					&=  \int\limits_{A}f(t,x(t)) d\mu(t)  +\delta/2.
				\end{align*}
				Then, considering the measurable sets $A^+_n:=\{ t \in {\T} :  f(t,x_n(t)) - f(t,x(t)) >0     \}$ and $A^-_{n}:=\{ t \in {\T} :      f(t,x_n(t)) - f(t,x(t)) <0  \}$, we get
				\begin{align*}
					\displaystyle \int\limits_T|  f(t,x_n(t)) - f(t,x(t))| d \mu(t) &= \int\limits_{A^+_n}  f(t,x_n(t)) - f(t,x(t)) d \mu(t) &\\& + \int\limits_{A^-_n}  f(t,x(t)) - f(t,x_n(t)) d \mu(t) \\
					&\leq \delta/2 + \delta/4< \delta;
				\end{align*}
				that is,  $\displaystyle \int\limits_T|  f(t,x_n(t)) - f(t,x(t))| d \mu(t)  \to 0$.
			\end{proof}
			
			The following lemma is a simple application of classical rules concerning  differentiation of integral functionals. 
			\begin{lemma}\label{lemmadifferenciabilidadnorma}
				Let $\mu$  be a finite  measure and let $f:T\times X \to \Rex$  be a normal integrand  Lipschitz on $\mathbb{B}(x_0,\gamma)$  with some $p$-integrable constant, that is to say, there exists   $K \in \textnormal{L}^p({\T},\R)$ such that $|f(t,x) - f(t,y)| \leq K(t)|x-y|$, for all $x,y \in \mathbb{B}(x_0,\gamma)$ and all $t \in {\T}$. Assume that  the functions  $f_t$ are Fr\'echet differentiable  at $x_0$ ae. Then $\Intf{f}$ is Fr\'echet differentiable at $x_0$, $\nabla f(\cdot ,x_0)$ belongs to $\textnormal{L}^p({\T},X^*)$ and $\nabla \Intf{f}(x_0) =\int_{\T} \nabla f_t(x_0) d\mu(t)$. Moreover, if $f_t$ are $C^1$ on $\inte(\mathbb{B}(x_0,\gamma ))$, then $\Intf{f}$ is $C^1$ on  $\inte (\mathbb{B}(x_0,\gamma))$.
			\end{lemma}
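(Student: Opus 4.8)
The plan is to prove this by differentiating under the integral sign, with the $p$-integrable Lipschitz constant serving as dominating function. We may assume $\Intf{f}(x_0)<\infty$ (otherwise, under the standing nonnegativity of $f$, the bound $0\le f(t,x_0+h)\le f(t,x_0)+K(t)\|h\|$ would force $\Intf{f}\equiv+\infty$ on $\mathbb{B}(x_0,\gamma)$ and the conclusion would be meaningless); then the same bound shows $\Intf{f}$ is finite on $\mathbb{B}(x_0,\gamma)$.

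\textbf{Step 1: the candidate derivative.} For almost every $t$ the functional $h\mapsto\langle\nabla f_t(x_0),h\rangle$ is the Fr\'echet derivative at $x_0$ of a function that is $K(t)$-Lipschitz near $x_0$, so its operator norm is at most $K(t)$; hence $\|\nabla f_t(x_0)\|\le K(t)$ almost everywhere. For each fixed $h\in X$, the scalar map $t\mapsto\langle\nabla f_t(x_0),h\rangle=\lim_n n\big(f(t,x_0+h/n)-f(t,x_0)\big)$ is measurable, being a pointwise limit of measurable functions; letting $h$ range over a countable dense subset of $X$ and invoking separability of $X^*$ together with Pettis' measurability theorem, $t\mapsto\nabla f_t(x_0)$ is a measurable $X^*$-valued map. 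Since $\mu$ is finite and $K\in\textnormal{L}^p(T,\R)\subseteq\textnormal{L}^1(T,\R)$, this map belongs to $\textnormal{L}^p(T,X^*)$ and is Bochner integrable; set $x^*:=\int_T\nabla f_t(x_0)\,d\mu(t)$.

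\textbf{Step 2: Fr\'echet differentiability at $x_0$.} For $0<\|h\|\le\gamma$ put $\psi_t(h):=\|h\|^{-1}\big(f(t,x_0+h)-f(t,x_0)-\langle\nabla f_t(x_0),h\rangle\big)$. Linearity of the integral and of the Bochner integral (together with continuity of $\langle\cdot,h\rangle$) give
\[\frac{\Intf{f}(x_0+h)-\Intf{f}(x_0)-\langle x^*,h\rangle}{\|h\|}=\int_T\psi_t(h)\,d\mu(t).\]
By the triangle inequality, the Lipschitz bound and Step 1, $|\psi_t(h)|\le 2K(t)$ almost everywhere for all such $h$, while $\psi_t(h)\to 0$ as $h\to 0$ for almost every $t$ by differentiability of $f_t$ at $x_0$. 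Since it suffices to check the limit along arbitrary sequences $h_n\to 0$ with $h_n\neq 0$, the dominated convergence theorem (dominating function $2K\in\textnormal{L}^1(T,\R)$) yields $\int_T\psi_t(h_n)\,d\mu(t)\to 0$; as the sequence is arbitrary, $\Intf{f}$ is Fr\'echet differentiable at $x_0$ with $\nabla\Intf{f}(x_0)=x^*=\int_T\nabla f_t(x_0)\,d\mu(t)$.

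\textbf{Step 3: the $C^1$ statement and the main difficulty.} If $f_t$ is $C^1$ on $U:=\inte\mathbb{B}(x_0,\gamma)$ for almost every $t$, then every $x\in U$ admits a ball around it contained in $\mathbb{B}(x_0,\gamma)$, so Steps 1--2 apply verbatim at $x$ (with the same dominating constant $K$), giving $\nabla\Intf{f}(x)=\int_T\nabla f_t(x)\,d\mu(t)$. For continuity of this gradient, take $x_k\to x$ in $U$: then $\nabla f_t(x_k)\to\nabla f_t(x)$ in $X^*$ almost everywhere (continuity of $\nabla f_t$) and $\|\nabla f_t(x_k)\|\le K(t)$ almost everywhere, so the Bochner dominated convergence theorem gives $\nabla\Intf{f}(x_k)\to\nabla\Intf{f}(x)$ in $X^*$, i.e. $\Intf{f}\in C^1(U)$. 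The only genuinely non-routine points, which I would write out in detail, are the measurability and Bochner integrability of $t\mapsto\nabla f_t(x)$ (handled by separability of $X^*$ and Pettis' theorem) and the reduction of the limit $h\to 0$ to sequences so that the scalar dominated convergence theorem is applicable; finiteness of $\mu$ is used exactly to ensure that the dominating function $2K$ is integrable.
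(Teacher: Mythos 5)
Your proposal is correct and follows essentially the same route as the paper: establish $\|\nabla f_t(x_0)\|\le K(t)$ and measurability of $t\mapsto\nabla f_t(x_0)$ via directional difference quotients and separability, then differentiate under the integral sign by dominated convergence with (a multiple of) $K$ as dominating function, and repeat the dominated-convergence argument for continuity of the gradient. The only cosmetic difference is that the paper takes the supremum of the difference-quotient error over the unit ball at scales $s_n\to 0^+$ (and must then justify measurability of that supremum through separability of $\mathbb{B}$), whereas you evaluate the error along an arbitrary sequence $h_n\to 0$, which sidesteps that supremum; both reductions are valid and yield the same conclusion.
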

			\begin{proof}
				First, the measurability and the integrability  of the function $t \to \nabla f_t(x_0)$ follows from the fact that for every $h\in X$, $\langle \nabla f_t(x_0), h \rangle =\lim\limits_{s\to 0^+} \frac{f(t, x_0 + sh) -f(t,x_0)}{s}$ and $\| \nabla f_t (x_0)\| \leq K(t)$ (see, e.g., \cite[\S2.1 Theorem 2 and \S2.2 Theorem 2]{MR0453964}). Now, take any sequence $(0,\gamma)\ni s_n \to 0^+$. Since $\mathbb{B}$ is bounded we can assume that $x_0 + s_n h \in \mathbb{B}(x_0,\gamma)$ for every $n \in \N$ and $h \in \mathbb{B}$, so that when the space $X$ is separable, the measurability of  
				$$t \to \sup\limits_{h \in \mathbb{B}}  \bigg| \frac{f_t(x_0 + s_n h) - f_t(x_0)}{s_n} - \langle \nabla f_t(x_0), h \rangle \bigg|$$
				 follows from the Lipschitz continuity of the integrand and the separability of $\mathbb{B}$. We notice that this function is bounded from  above by $K$; moreover, it converges to zero (ae) as $n \to \infty$.  Then  by  Lebesgue's dominated convergence theorem (see, e.g., \cite[Theorem 2.8.1]{MR2267655}) we get $$\lim\limits_{n\to \infty}\sup_{h \in \mathbb{B}} \bigg|\frac{ \Intf{f}(x_0 +s_n h) - \Intf{f}(x_0)}{s_n} - \int_{\T} \langle \nabla f_t(x_0), h \rangle d\mu(t) \bigg|\overset{n\to \infty }{\rightarrow}0,$$ which  concludes the first part.

				To prove the continuity of the derivative $\nabla \Intf{f} : \inte(\mathbb{B}(x_0,\gamma)) \to (X^*,\| \cdot \|)$, consider $x_n \to x\in \inte(\mathbb{B}(x_0,\gamma)$ with $x_n \in \mathbb{B}(x_0,\gamma)$. Then for almost all $t \in {\T}$, $$\lim\limits_{n\to \infty}\Bigg|  \nabla f_t (x)-\nabla f_t(x_n)\Bigg|= 0,$$ and $$g_n(t):=\sup_{h \in \mathbb{B}} |\langle  \nabla f_t (x)-\nabla f_t(x_n),h \rangle | \leq 2K(t)\; ae.$$ Then, again by the Lebesgue dominated convergence theorem, we get $$\big|  \nabla \Intf{f}(x) - \nabla \Intf{f}(x_n) \big| \overset{ n \to \infty }{\longrightarrow}0.$$
			\end{proof}

		\section{Variational principles}

			Now, we recall  the Borwein-Preiss Variational Principle, for which we need to introduce the notion of type-gauge functions.
		\begin{definition}\cite[Definition 2.5.1]{MR2144010}
			Consider $(\mathcal{X},d)$ a metric space. We say that a continuous function $\rho : \mathcal{X} \times \mathcal{X} \to [0, +\infty]$ is a gauge-type function provided that:
			\begin{enumerate}[label=(\roman*)]
				\item $\rho(x,x)=0$ for all $x\in \mathcal{X}$,
				\item for any $\epsilon >0$, there exists $\eta >0$ such that for all $y,z \in \mathcal{X}$ we have $\rho(y,z)\leq \eta$ implies that $d(y,z ) \leq \epsilon$.
			\end{enumerate}
			\end{definition}	
			The next result corresponds to  the   Borwein-Preiss Variational Principle.
			\begin{proposition} \cite[Theorem 2.5.3]{MR2144010}\label{BorweinPreiss:VP}
				Let $(\mathcal{X},d)$ be a complete  metric space and let $f: \mathcal{X}\to  \mathbb{R}\cup\{+\infty\}$ be a lsc function bounded from below. Suppose that $\rho$ is a gauge-type function and $(\eta_i)_{i=0}^{\infty}$ is a sequence of positive numbers, and suppose that $\epsilon >0$ and $z\in \mathcal{X}$ satisfy
				\begin{align*}
					f(z) \leq \inf\limits_{ \mathcal{X}} f +\epsilon.
					\end{align*}  
				Then there exist $y$ and a sequence $(x_i)$ such that 
				\begin{enumerate}[label=(\roman*)]
					\item $\rho(z,y) \leq \epsilon/\eta_0$, $\rho(x_i,y) \leq \epsilon/(2^i \eta_0  )$ for all $i=1,2,...$.
					\item $f(y)  + \sum\limits_{i=0}^\infty \eta_i \rho(y,x_i)\leq  f(z)$,
					\item $f(x) + \sum\limits_{i=0}^\infty \eta_i \rho(x,x_i) > f(y) +  \sum\limits_{i=0}^\infty \eta_i \rho(y,x_i)$, for all $x \in \mathcal{X}\backslash \{ y \}$.
					
 				\end{enumerate}
				\end{proposition}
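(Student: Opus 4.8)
The plan is to produce the sequence $(x_i)$ by an iterative approximate--minimization scheme, in the spirit of Ekeland's principle but with the perturbations accumulated additively, obtaining $y$ as the unique point of a nested family of sublevel sets whose $d$-diameters shrink geometrically. Concretely, I would set $x_0:=z$, $f_0:=f$, and inductively, once $x_0,\dots,x_n$ are chosen, introduce the running perturbed function $f_{n+1}:=f+\sum_{i=0}^{n}\eta_i\rho(\cdot,x_i)$, which is proper, lsc and bounded below because $f$ is and $\rho\ge 0$. Putting $\epsilon_0:=\epsilon$ and $\epsilon_{n+1}:=\epsilon\,\eta_{n+1}/(2^{n+1}\eta_0)$, I would pick $x_{n+1}$ so that
\[
f_{n+1}(x_{n+1})\le\inf_{\mathcal X}f_{n+1}+\epsilon_{n+1}
\qquad\text{and}\qquad
f_{n+1}(x_{n+1})\le f_n(x_n).
\]
Such a choice exists: since $\rho(x_n,x_n)=0$ one has $\inf_{\mathcal X}f_{n+1}\le f_{n+1}(x_n)=f_n(x_n)$, so adjoining the constraint $f_{n+1}(\cdot)\le f_n(x_n)$ does not raise the infimum, and any $\epsilon_{n+1}$-minimizer of the constrained problem works (in the degenerate case where the common value is attained only in the limit, take $x_{n+1}:=x_n$).

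\textbf{Extraction of $y$.} Next I would set $S_n:=\{x\in\mathcal X:\ f_{n+1}(x)\le f_n(x_n)\}$. Each $S_n$ is nonempty ($x_{n+1}\in S_n$) and closed (lower semicontinuity of $f_{n+1}$), and the family decreases: $x\in S_{n+1}$ gives $f_{n+1}(x)\le f_{n+2}(x)\le f_{n+1}(x_{n+1})\le f_n(x_n)$, so $x\in S_n$. The decisive estimate is on diameters: for $x\in S_n$, since $x_n$ is an $\epsilon_n$-minimizer of $f_n$ one has $\inf_{\mathcal X}f_n\ge f_n(x_n)-\epsilon_n$, whence $\eta_n\rho(x,x_n)=f_{n+1}(x)-f_n(x)\le f_n(x_n)-\inf_{\mathcal X}f_n\le\epsilon_n$, i.e.\ $\rho(x,x_n)\le\epsilon/(2^n\eta_0)$ for $n\ge1$ and $\rho(x,z)\le\epsilon/\eta_0$ for $n=0$. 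By the second property of a gauge-type function this forces the $d$-diameters of the $S_n$ to tend to $0$; completeness of $(\mathcal X,d)$ then gives $\bigcap_n S_n=\{y\}$, and from $x_{n+1}\in S_n$ we get $x_n\to y$.

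\textbf{Checking (i)--(iii).} Item (i) is read off directly from $y\in S_0$ and $y\in S_n$ together with the $\rho$-estimates above. For (ii), the monotonicity built into the construction gives $f_{n+1}(x_{n+1})\le f_n(x_n)\le\dots\le f_0(x_0)=f(z)$, hence $f_{n+1}(y)\le f_n(x_n)\le f(z)$ for every $n$; since $n\mapsto f_{n+1}(y)=f(y)+\sum_{i=0}^{n}\eta_i\rho(y,x_i)$ is nondecreasing and bounded above by $f(z)$, it converges and its limit $f(y)+\sum_{i=0}^{\infty}\eta_i\rho(y,x_i)$ is $\le f(z)$. For (iii), write $f_\infty:=f+\sum_{i=0}^{\infty}\eta_i\rho(\cdot,x_i)=\sup_n f_{n+1}$; given $x\ne y$, uniqueness of the intersection yields $x\notin S_N$ for some $N$, that is $f_{N+1}(x)>f_N(x_N)$, while $f_\infty(y)=\lim_k f_{k+1}(y)\le\lim_k f_k(x_k)=\inf_k f_k(x_k)\le f_N(x_N)$ because $(f_k(x_k))_k$ is nonincreasing; combining, $f_\infty(x)\ge f_{N+1}(x)>f_N(x_N)\ge f_\infty(y)$, which is (iii).

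\textbf{Main obstacle.} The only genuinely delicate point is the construction step: one must choose $x_{n+1}$ meeting \emph{both} the $\epsilon_{n+1}$-minimality requirement \emph{and} the monotonicity $f_{n+1}(x_{n+1})\le f_n(x_n)$, and one must calibrate the tolerances as $\epsilon_n=\epsilon\eta_n/(2^n\eta_0)$. The monotonicity is what makes the sets $S_n$ nest exactly and what makes the telescoping in (ii) yield the sharp value $f(z)$ with no error term, while the calibration is what makes the $\rho$-radii of the $S_n$ come out as precisely the constants demanded in (i). Everything else --- lower semicontinuity, closedness, nesting, the passage from $\rho$-smallness to $d$-smallness via the gauge-type property, and the appeal to completeness --- is routine.
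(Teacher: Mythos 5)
Your proof is correct, and it is essentially the standard argument for the Borwein--Preiss principle from the cited source \cite[Theorem 2.5.3]{MR2144010}: the paper itself gives no proof of this proposition, only the citation, and your inductive choice of $\epsilon_n$-minimizers of the cumulatively perturbed functions $f_{n+1}=f+\sum_{i=0}^{n}\eta_i\rho(\cdot,x_i)$, the nested closed sublevel sets $S_n$ with $\rho$-radii $\epsilon/(2^n\eta_0)$, and the Cantor-intersection extraction of $y$ reproduce that proof faithfully, including the correct handling of the degenerate case $\inf f_{n+1}=f_n(x_n)$ by taking $x_{n+1}=x_n$. The only discrepancy is notational: you bound $\rho(y,x_i)$ whereas item (i) as transcribed in the statement reads $\rho(x_i,y)$, a distinction that is immaterial here since the paper only ever invokes the principle with the symmetric gauge $\rho(a,b)=\|a-b\|^2$.
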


				We recall that $(X,\| \cdot \|)$ is assumed to be a separable Asplund space and its norm is Fr\'echet differentiable away from the origin. The next result corresponds to a variational principle applied to integral functions.
				
					\begin{lemma}\label{lemmasubdeltaminimun}
					Let $z(\cdot)$ be a measurable function with values in $X$,  and let $\varepsilon(\cdot)$ and $\lambda(\cdot)$ be two strictly positive measurable functions. Suppose that  $z(t)$ is an $\varepsilon(t)$-minimum of $f_t$. Then there are measurable functions $y$ and $y^*$ such that for almost all $t \in T$, $y^*(t)\in \Asub f(t,y(t))$, $\| y(t) - z(t)\| \leq \lambda(t)$, $|f(t,y(t)) - f(t,z(t))| \leq \epsilon(t)$ and $\|y^*(t) \| \leq 4 \epsilon(t)/\lambda(t)$.
				\end{lemma}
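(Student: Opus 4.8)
The plan is to apply the Borwein–Preiss variational principle (\cref{BorweinPreiss:VP}) pointwise in $t$, while taking care that the resulting objects $y(t)$ and $y^*(t)$ can be chosen measurably in $t$. First, for each fixed $t$, since $z(t)$ is an $\varepsilon(t)$-minimum of the lsc function $f_t$ which is bounded below (we are in the standing setting where $f\geq 0$), I would apply \cref{BorweinPreiss:VP} on the complete metric space $(X,\|\cdot\|)$ with the gauge-type function $\rho(u,v):=\|u-v\|^2$, the single weight $\eta_0:=\varepsilon(t)/\lambda(t)^2$ (all other $\eta_i=0$), and the point $z(t)$. This produces a point $y(t)$ with $\|y(t)-z(t)\|^2\le \varepsilon(t)/\eta_0=\lambda(t)^2$, i.e. $\|y(t)-z(t)\|\le\lambda(t)$, such that $u\mapsto f_t(u)+\eta_0\|u-y(t)\|^2$ attains a strict global minimum at $y(t)$; moreover $f_t(y(t))\le f_t(z(t))$, and combined with $z(t)$ being an $\varepsilon(t)$-minimum this gives $|f_t(y(t))-f_t(z(t))|\le\varepsilon(t)$.

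Next I would extract the subgradient. Since $y(t)$ minimizes $f_t+\eta_0\|\cdot-y(t)\|^2$ and the squared norm is $C^1$ away from $0$ with gradient of norm $2\eta_0\|u-y(t)\|$, the function $\phi(u):=-\eta_0\|u-y(t)\|^2$ is $C^1$ near $y(t)$ with $\nabla\phi(y(t))=0$, so by the viscosity characterization of the Fréchet subdifferential recalled in \Cref{NotationCHAP5}, $0\in\Asub f_t(y(t))$; thus I may take $y^*(t):=0$, which trivially satisfies $\|y^*(t)\|\le 4\varepsilon(t)/\lambda(t)$. (If instead one wants the minimizer $y(t)$ to be genuinely close to $z(t)$ with a nonzero controlled subgradient — which is what the constant $4$ anticipates — one perturbs slightly: apply the principle, then use that near the strict minimizer $y(t)$ a suitable $C^1$ bump gives $y^*(t)\in\Asub f_t(y(t))$ with $\|y^*(t)\|\le 2\eta_0\|y(t)-z(t)\|+\text{(small)}\le 4\varepsilon(t)/\lambda(t)$; the precise bookkeeping of the constant is routine.)

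The real work is the measurability of $t\mapsto(y(t),y^*(t))$. For this I would introduce the multifunction
$$
M(t):=\bigl\{(u,u^*)\in X\times X^* : u^*\in\Asub f_t(u),\ \|u-z(t)\|\le\lambda(t),\ |f_t(u)-f_t(z(t))|\le\varepsilon(t),\ \|u^*\|\le 4\varepsilon(t)/\lambda(t)\bigr\},
$$
which the variational argument above shows to be nonempty-valued. I would verify that $\grafo M$ belongs to $\mathcal{A}\otimes\mathcal{B}(X)\otimes\mathcal{B}(X^*)$: the constraint sets are graph-measurable because $f$ is a normal integrand (so $(t,u)\mapsto f_t(u)$ is $\mathcal{A}\otimes\mathcal{B}(X)$-measurable and $z,\varepsilon,\lambda$ are measurable), $t\mapsto\|u-z(t)\|$ is a Carathéodory function, and $\grafo\Asub f$ is $\mathcal{A}\otimes\mathcal{B}(X)\otimes\mathcal{B}(X^*)$-measurable (the Fréchet subdifferential of a normal integrand is a graph-measurable multifunction — this is standard, using that $X$ is separable Asplund and writing the liminf defining $\Asub$ via a countable dense set). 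Since $X$ (norm topology) and $X^*$ (weak$^*$ topology) are Suslin, $X\times X^*$ is Suslin, and \cref{measurableselectiontheorem} (the measurable selection theorem, \cite[Theorem III.22]{MR0467310}) yields a measurable selection $t\mapsto(y(t),y^*(t))\in M(t)$, which is exactly the claim. The main obstacle, then, is not the variational principle itself — that is applied verbatim fibrewise — but packaging the pointwise construction into a graph-measurable multifunction whose graph-measurability rests on that of $\grafo\Asub f$, after which the Suslin-space selection theorem closes the argument.
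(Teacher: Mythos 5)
Your high-level plan (pointwise Borwein--Preiss plus a measurable-selection argument) is the same as the paper's, but both halves of your execution have genuine problems.

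First, the variational step is misapplied. Borwein--Preiss (\cref{BorweinPreiss:VP}) produces a point $y$ \emph{and} auxiliary centers $(x_i)$ such that $y$ minimizes $f_t+\sum_i\eta_i\|\cdot-x_i\|^2$; the quadratic perturbation is centered at the $x_i$, not at $y$ itself. Your main line replaces the centers by $y(t)$, concludes $\nabla\phi(y(t))=0$ and hence $0\in\Asub f_t(y(t))$, and takes $y^*(t)=0$. That conclusion is false in general: for $f_t(x)=e^x$ on $\R$ every point is an $\varepsilon$-minimum for small enough values, yet $\Asub f_t(u)=\{e^u\}$ never contains $0$. The correct subgradient is $y^*(t)=-\nabla\phi(t,y(t))$ with $\phi(t,u)=\sum_i\eta_i(t)\|u-x_i(t)\|^2$, whose norm is bounded by $\sum_i 2\eta_i(t)\|y(t)-x_i(t)\|\le 4\varepsilon(t)/\lambda(t)$ using $\|y(t)-x_i(t)\|\le\lambda(t)/\sqrt{2}^{\,i}$ and $\sum_i\eta_i=2$ --- this is exactly where the constant $4$ comes from. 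Your parenthetical gestures in this direction but cannot be ``routine bookkeeping'' on top of an argument whose main branch yields a false statement; the whole construction has to be run with the centers $x_i$ from the start.

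Second, your measurability argument rests on the claim that $\grafo\Asub f$ is $\mathcal{A}\otimes\mathcal{B}(X)\otimes\mathcal{B}(X^*)$-measurable, which you call standard. It is not: the definition of $\Asub$ involves a universal quantifier over all $h$ in a ball, and since $f_t$ is merely lsc one cannot reduce this to a countable dense set of $h$'s (lower semicontinuity bounds $f$ at a limit point from \emph{above} by the liminf, which is the wrong direction for preserving an inequality $f(x+h)\ge\cdots$). The resulting set is only coanalytic, and \cref{measurableselectiontheorem} requires the graph to lie in the product $\sigma$-algebra. The paper's proof is engineered precisely to avoid this: it selects measurably the \emph{entire} Borwein--Preiss output $(y,(x_i))$ from a multifunction $M(t)\subseteq X\times\prod_i X$ whose defining conditions involve only the normal integrand $f$, the value function, norm constraints, and membership in the graph of an argmin map (whose measurability is covered by a standard theorem for normal integrands); the subgradient is then recovered \emph{a posteriori} as the explicit measurable formula $y^*(t)=-\nabla\phi(t,y(t))$, so no measurability of $\Asub f$ is ever needed. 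You should restructure the selection step along these lines rather than through $\grafo\Asub f$.
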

				
				\begin{proof}
				Consider $\eta_i>0$ with $\eta_0=1$ such that $\sum_{i= 0}^\infty\eta_i=2$.  Then define $\eta_i(t):=\eta_i \cdot  \varepsilon(t)/\lambda^2(t)$, the space  $S=X\times\prod_{i=0}^{\infty}X$ with the product topology,  and the  function $\varphi :\T \times S \to \Rex$ given by $$\varphi(t,y,(x_i)) = \sum_{i=0}^{\infty}\eta_i(t)  \| y-x_i\|^2. $$ 
					It is not hard to prove that $S$ is a Polish space (i.e., metrizable, complete and separable) and that $\varphi$ is measurable.
					Consider the set valued map $A$ defined by $$y  \in A(t,(x_i)) \text{  if and only if }  y \in \textnormal{argmin}_{X} \bigg\{ f(t,\cdot)+ \varphi(\cdot, (x_i))\bigg\};$$ hence, by \cite[Theorem 8.2.11]{MR2458436} (see also \cite{MR1491362,MR1485775,MR0467310}), $A$
					 is a measurable set-valued map from $T\times \prod_{i=0}^{\infty}X$ to $X$, while by \cite[Proposition III.13]{MR0467310} we have that $\grafo A \in \mathcal{A}\otimes \mathcal{B} ( \prod_{i=0}^{\infty}X) \otimes \mathcal{B} (X)$. Furthermore, by \cite[Lemma 6.4.2]{MR2267655} (see also \cite[Proposition 1.49]{MR1485775}) we have that $\mathcal{B} ( \prod_{i=0}^{\infty}X) \otimes \mathcal{B} (X)=\mathcal{B}(S) $, which implies that $\grafo A \in \mathcal{A} \otimes \mathcal{B}(S)$.
					 
					 Now we consider   the multifunction $M$ so that $(y, (x_i)) \in M(t)$ if and only if  
						\begin{enumerate}[label=(\roman*),ref=(\roman*)]
						\item \label{M1}$\|(z(t) - y\| \leq \lambda(t)$, $\|x_i-y\|\leq \lambda(t)/\sqrt{2}^{i}$ for all $i=1,2,...$,
						\item\label{M2} $f(y)  + \varphi(t,y,(x_i))\leq  f(z(t))$,
						\item\label{M3} $f(t,w) +  \varphi(w,(x_i))  \geq f(t,y) +  \varphi(y,(x_i))$ for all $w \in X$.
						
					\end{enumerate}
					
					From the fact that  every function involved in the definition of  $M$ is $\mathcal{A} \otimes \mathcal{B}(S)$-measurable and the fact that \cref{M3} is equivalent to $(t,y,(x_i)) \in \grafo A$, we have $\grafo M \in \mathcal{A}\otimes \mathcal{B}(S)$. 
					
					  We claim that $M(t)$ is non-empty for all $t\in T$. Indeed, consider the type-gauge function $\rho(a,b):=\| a-b\|^2$. Then, applying the \emph{Borwein-Preiss Variational Principle} (see \cref{BorweinPreiss:VP}) to the $\epsilon(t)$-minimum $z(t)$ of $f$, with $\rho$ and the sequence $\eta_i(t)$, there exists $(y,(x_i))$ that verifies  \cref{M2,M3}. Furthermore, $\rho(z(t),y) \leq \epsilon(t)/\eta_0(t)= \lambda^2(t)$ and  $\rho(x_i,y) \leq \epsilon(t)/(2^i \eta_0(t))=  \lambda^2(t)/2^i$, which clearly implies \cref{M1}.

	Now, by the \emph{Measurable Selection Theorem} (see \cref{measurableselectiontheorem}), there exist measurable functions $(y(t), x_i(t)) \in M(t)$ ae. Hence, $\| y(t) - z(t)\| \leq \lambda(t)$,  $\| x_i(t) -y(t)\| \leq \lambda(t)/\sqrt{2}^{i}$ for all $i=1,2,...$,  $|f(t,y(t)) - f(t,z(t))| \leq \varepsilon(t)$  and $f(t,w) +  \varphi(t,w,(x_i(t)))  \geq f(t,y(t)) +  \varphi(t,y(t),(x_i(t)))$ for all $w \in X$ ae. 
					
					Finally, it is easy to see that $\phi(t,y) := \sum_{i=0}^\infty \eta_i(t) \| y - x_i(t)\|^2$  is $C^1$  with  respect to the second argument (see, e.g.,  \cref{lemmadifferenciabilidadnorma}), $\nabla \phi(t,y(t))$ is measurable and 
					\begin{align*}
						\| \nabla \phi(t,y(t)) \|&\leq \sum\limits_{i=0}^{\infty}2 \eta_i(t) \|y(t) - x_i(t)\| 
					\leq  4\frac{\epsilon(t)}{ \lambda(t)}.
						\end{align*}
					
				 Hence,  
					$f(t,\cdot)+ \phi(t,\cdot)$ attains a minimum at $y(t)$, and so $y^*(t):=-\nabla \phi(t,y(t))$ belongs to $\Asub f_t(y(t))$.
				\end{proof}

				The following result gives some preliminary consequences of the existence of a $p$-robust minimizer.
			\begin{lemma}\label{Robustlocalminimizer}
				Let $(\T,\mathcal{A},\mu)$ be a finite measure space, $p\in [1,+\infty)$ and  $x \in X$ be a $p$-robust  minimizer of $\Intf{f}$ on $B\subseteq X$. Then for every sequence $\varepsilon_n \to 0$,  and $\varepsilon_n$-minimizer $(x_n(\cdot),y_n)$ of  the function $\varphi_n: \textnormal{L}^p({\T},X)\times X \to \Rex$, defined as 
				\begin{align*}
					\varphi_n(w,u):=\displaystyle\int\limits_T  f(t,w(t)) d\mu(t) + n \int\limits_T \| w(t)-u\|^p d\mu(t) + \|  x_0 -u\|^p + \delta_B(u),
				\end{align*}
				 we have
				\begin{enumerate}[label={(\alph*)},ref={(\alph*)}]
					\item\label{aRobustlocalminimizer} $n (\| x_n(\cdot)-y_n \|_p)^p$, $ \| x_n(\cdot)-x \|_p$,  $\| y_n-x \|$ $\to 0$, and
					\item\label{bRobustlocalminimizer} $\displaystyle\int\limits_{{\T}} | f(t,x_n(t))  - f(t,x)|d\mu(t) \to 0$.
				\end{enumerate}
				In particular, we have  
				\begin{equation}\label{igualdadproblemasperturbados}
					\sup\limits_{n\in \N}\inf\limits_{\substack{w \in \textnormal{L}^p({\T},X)\\ u\in  X}} \varphi_n(w,u)= \Intf{f}(x).
				\end{equation}
			\end{lemma}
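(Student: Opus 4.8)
The plan is to read everything off from the definition of an $\varepsilon_n$-minimizer of $\varphi_n$ combined with the definition of $p$-robustness (\cref{ROBUSTEDLOCALMINIMUM}). First I would record the trivial upper bound: since $x\in B$ minimizes $\Intf{f}$ on $B$ and $\mu$ is finite, the constant map $w\equiv x$ lies in $\textnormal{L}^p(T,X)$, so $\varphi_n(x,x)=\Intf{f}(x)\in\R$ and hence $\inf\varphi_n\le\Intf{f}(x)$ for every $n$; moreover, for each fixed $(w,u)$ the map $n\mapsto\varphi_n(w,u)$ is non-decreasing (the term $n\int_T\|w(t)-u\|^pd\mu(t)$ is non-negative), so the sequence $n\mapsto\inf\varphi_n$ is non-decreasing and bounded above by $\Intf{f}(x)$.

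Next I would fix an $\varepsilon_n$-minimizer $(x_n,y_n)$. Since $\varphi_n(x_n,y_n)\le\inf\varphi_n+\varepsilon_n\le\Intf{f}(x)+\varepsilon_n<\infty$, necessarily $y_n\in B$ (otherwise $\varphi_n(x_n,y_n)=+\infty$), so that
\begin{equation*}
\int_T f(t,x_n(t))d\mu(t)+n\int_T\|x_n(t)-y_n\|^pd\mu(t)+\|x-y_n\|^p\le\Intf{f}(x)+\varepsilon_n.
\end{equation*}
Using $f\ge 0$, two consequences are immediate: $\rho_n:=\int_T\|x_n(t)-y_n\|^pd\mu(t)\le(\Intf{f}(x)+\varepsilon_n)/n\to 0$, and $\int_T f(t,x_n(t))d\mu(t)\le\Intf{f}(x)+\varepsilon_n$, whence $\limsup_n\int_T f(t,x_n(t))d\mu(t)\le\Intf{f}(x)$.

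The heart of the argument is the matching lower bound. Fix $\varepsilon>0$; since $\rho_n\to 0$ we have $\rho_n\le\varepsilon$ for $n$ large, and then $(x_n,y_n)$ is admissible for the inner infimum appearing in the definition of $\wedge_{p,B}\Intf{f}$ relative to that $\varepsilon$, so $\int_T f(t,x_n(t))d\mu(t)$ is bounded below by it. Taking $\liminf_n$ and then $\sup_{\varepsilon>0}$, and using that $x$ is a $p$-robust minimizer, we obtain $\liminf_n\int_T f(t,x_n(t))d\mu(t)\ge\wedge_{p,B}\Intf{f}=\inf_B\Intf{f}=\Intf{f}(x)$. Together with the $\limsup$ bound from the previous step this gives $\int_T f(t,x_n(t))d\mu(t)\to\Intf{f}(x)$.

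Feeding this back into the displayed inequality, the two non-negative remainders are squeezed to zero: $n\|x_n(\cdot)-y_n\|_p^p\to 0$ and $\|x-y_n\|^p\to 0$; combined with $\|x_n(\cdot)-x\|_p\le\|x_n(\cdot)-y_n\|_p+\mu(T)^{1/p}\|y_n-x\|$ this yields all of \cref{aRobustlocalminimizer}. Then \cref{bRobustlocalminimizer} follows at once from \cref{lemma:convergenciaL1} applied to $x_n(\cdot)\to x$ in $\textnormal{L}^p(T,X)$ with $\int_T f(t,x_n(t))d\mu(t)\to\int_T f(t,x)d\mu(t)\in\R$. Finally, \cref{igualdadproblemasperturbados} follows from $\inf\varphi_n\le\Intf{f}(x)$ together with $\inf\varphi_n\ge\varphi_n(x_n,y_n)-\varepsilon_n\ge\int_T f(t,x_n(t))d\mu(t)-\varepsilon_n\to\Intf{f}(x)$ and the monotonicity of $n\mapsto\inf\varphi_n$. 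The step I expect to require the most care is the lower bound, where one must invoke the definition of $p$-robustness correctly (the interplay between $\liminf_n$ and $\sup_{\varepsilon>0}$); everything else is bookkeeping with non-negative quantities.
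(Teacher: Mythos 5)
Your proof is correct and follows essentially the same route as the paper's: both bound $\varphi_n(x_n,y_n)\le \Intf{f}(x)+\varepsilon_n$ using the constant pair $(x,x)$, deduce $\int_T\|x_n(t)-y_n\|^p d\mu(t)\to 0$, and then invoke the $p$-robustness of the infimum (your $\sup_{\varepsilon>0}\liminf_n$ interchange is exactly the paper's comparison with the quantity $\kappa_{\gamma_n}$) to squeeze $\int_T f(t,x_n(t))\,d\mu(t)\to\Intf{f}(x)$, after which \cref{aRobustlocalminimizer}, \cref{igualdadproblemasperturbados} and, via \cref{lemma:convergenciaL1}, \cref{bRobustlocalminimizer} all follow. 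Your write-up is in fact slightly more explicit than the paper's about the lower-bound step and about deducing $\|y_n-x\|\to 0$.
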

			\begin{proof}
				First, for $n \geq 1$ and $\gamma >0$  define 
				\begin{align*}
					\nu_n:=& \inf\{  \varphi_n(w,u) \mid  w \in \textnormal{L}^p({\T},X) \text{ and } u\in  X  \},\\
					\kappa_\gamma:=&\inf\{ \int_{\T} f(t,w(t)) \mid \int_{\T} \| w(t) - u\|^p d\mu(t) \leq \delta,\text{ }w \in \textnormal{L}^p({\T},X) \text{ and } u\in  B  \}.
				\end{align*}
				We have 
				$$n (\| x_n(\cdot)-y_n\|_p)^p \leq \int_{\T} \left( f(t,x_n(t)) +n\| x_n(t)-y_n  \|^p \right) d\mu(t)  + \| y_n -x\|^p \leq \Intf{f}(x) +\varepsilon_n.$$
				 The last inequality implies that $\int_{\T} \| x_n(t)-y_n\|^p d\mu(t) \to 0$, and so, setting $\gamma_n:=\int_{\T} \|x_n(t) -y_n\|^p d\mu(t) $,
				\begin{align*}
					\kappa_{\gamma_n}-\varepsilon_n \leq \int_{\T} f(t,x_n(t))d\mu(t) -\varepsilon_n \leq  \varphi_n(x_n,y_n) -\varepsilon_n \leq \nu_n\leq \Intf{f}(x_0).
				\end{align*}
				By taking the limits we conclude that $\displaystyle\int\limits_{{\T}}  f(t,x_n(t)) d\mu(t)  \to  \int\limits_{{\T}}f(t,x_0)d\mu(t) $, and, consequently,  \cref{aRobustlocalminimizer} and   \cref{igualdadproblemasperturbados} follow. Finally, \cref{bRobustlocalminimizer} follows by using  \cref{lemma:convergenciaL1}.
			\end{proof}

		\section{Some technical results}
		In this part of the appendix we present the proofs of some technical results, which were used  in the main sections of the article.
		\subsection{Sufficient conditions for $p$-robust minima}\label{proof_of_Prop:SuficientCondition}
		
		Here we present the proof of \cref{Prop:SuficientCondition}, which ensures conditions to have a $p$-robust infimum.
		
		\begin{proof}[Proof of \cref{Prop:SuficientCondition}]
			\begin{enumerate}[label={(\alph*)},ref={(\alph*)}] 
				\item 	In the first case define $$\nu_n:=\inf\limits_{\substack{w \in \textnormal{L}^p({T},X)\\ u\in  B}}\{ \displaystyle\int\limits_T f(t,w(t)) \mid \displaystyle\int\limits_T \| w(t) - u\|^p \leq 1/n \},$$ take $\varepsilon_n \to 0^+$ and $(x_n,y_n) \in \textnormal{L}^p({T},X)\times B$ such that 
				\begin{equation}\label{fatoudemostracion}
				-\varepsilon_n + \int\limits_{{T}} f(t,x_n(t)) \leq \nu_n,
				\end{equation}
				and $\displaystyle\int\limits_T \| x_n(t) - y_n\|^p \leq 1/n$. We can suppose that for every $t_1 \in {T}$ and $t_2 \in A$, $\| x_n(t_1) -y_n\| \to 0$ and $f(t_2,\cdot)$ is sequentially $\tau$-inf-compact. So, by Fatou's  lemma we have that, for every subsequence $x_{n_k}$ of $x_n$,
				\begin{align}\label{eq:fatou}
					\int_{T} \liminf f(t,x_{n_k}(t)) d\mu(t)  \leq \liminf \int_{T} f(t,x_{n_k}(t)) \leq  \inf_B \Intf{f} < +\infty.
				\end{align}
				Then,  in particular, for some $ t_0 \in A$, $\liminf f(t_0,x_n(t_0)) < + \infty$, and there exist a subsequence $x_{n_{k(t_0)}}(t_0)$ and a constant $M_{t_0}$ such that $f(t_0, x_{n_{k(t_0)}}(t_0))  \leq M_{t_0}$. Whence, by the inf-compactness  of $f(t_0,\cdot)$, there exists a subsequence $z_n$ of $x_{n_{k(t_0)}}(t_0)$ such that $z_n \to w_0 \in X$. Because $\| x_n(t_0) -y_n\| \to 0$, we get the existence of  a subsequence $y_{\phi(n)}$ of $y_n$ such that $y_{\phi(n)} \overset{\tau}{\to}  w_0 \in B$ (because $B$ is $\tau$-closed). Then, from the fact that $\| x_n(t) - y_n\| \to 0$, we get $x_{\phi(n)}(t) \overset{\tau}{\to} w_0$ for all $t\in {T}$. Finally, taking into account \cref{fatoudemostracion} and  using the lsc of the integrand in  \cref{eq:fatou} we obtain
				\begin{align*}
					\inf_B \Intf{f} &\leq \Intf{f}(w_0)  \leq \displaystyle\int\limits_T \liminf f(t, x_{\phi(n)}(t)) d \mu(t) \\ &\leq \liminf \displaystyle\int\limits_T  f(t, x_{\phi(n)}(t)) d \mu(t) \leq \wedge_{p,B} \Intf{f} \leq  \inf_B \Intf{f}.
				\end{align*}
				\item The second case follows from the first part, by modifying the measure space and the integrand function as follows: For $\omega_0 \notin T$, define $(\tilde{{T}},\tilde{\mathcal{A}},\tilde{\mu})$, where $\tilde{{T}}={T}\cup\{ \omega_0\}$, $\tilde{\mathcal{A}}=\sigma(\mathcal{A}, \{ \omega_0\})$ (the $\sigma$-algebra generated by $\mathcal{A}\cup \{  \{ \omega_0\}\}$) and $\tilde{\mu}(A)=\mu(A\backslash\{ \omega_0\}) + \1_{A}(\omega_0)$, and 
				\begin{align*}
					\tilde{f}(t,x)=\Bigg\{  \begin{array}{cl}
						f(t,x), &\text{ if } t\in T,\\
						\delta_{B} (x),&\text{ if } t=\omega_0.
					\end{array}
				\end{align*}
				Then, by the first part $	\wedge_{p,B} \Intf{\tilde{f}}^{\tilde{\mu}}= \inf_B \Intf{\tilde{f}}^{\tilde{\mu}}$ and, consequently,
				\begin{align*}
					\inf_B \Intf{f} \geq 	\wedge_{p,B} \Intf{f} \geq 	\wedge_{p,B} \Intf{\tilde{f}}^{\tilde{\mu}}= \inf_B \Intf{\tilde{f}}^{\tilde{\mu}}=\inf_B \Intf{f}.
				\end{align*}
				
			\item 	In the last case, let $K$ be the $q$-integrable Lipschitz constant and  consider $w\in \textnormal{L}^p({T},X)$ and $y \in B$. Then 
				\begin{align*}
					\displaystyle\int\limits_T f(t,w(t)) d\mu(t) &\geq - \int\limits_T| f(t,w(t))  - f(t,y)|d\mu(t) + \int\limits_T f(t,y) d\mu(t)  \\
					&\geq -\int\limits_T K(t)\| w(t)- y\| d\mu(t) +  \inf_B \Intf{f}.
				\end{align*}
				So, the result follows taking the appropriate  limits.
			\end{enumerate}
		
		\end{proof}

		\subsection{Proof of \cref{aproximatecalculsrules}}\label{Apendix:aproximatecalculsrules}
		We prove  \cref{aproximatecalculsrules} given in   \cref{section:Robusted}.
		\begin{proof}[Proof of \cref{aproximatecalculsrules}]
			We recall that the norm in $X$ is assumed to be $C^1$ away from the origin. Consider the function $\ell(x)=\| x\|^p$. It is easy to see that $\ell$ is $C^1$ everywhere. Moreover, we have that   $$ \| x \|^{p-1} \leq  \| \nabla \ell(x)\| \leq p \| x\|^{p-1}  \text{ for all } x\in X.$$ 
Consider $r  \in (0,1)$ such that $x_0$ is a $p$-robust minimizer of $\Intf{f}$ on $B:=\mathbb{B}(x_0,r)$, and fix  a family of $\eta_i >0$ such that $\eta_0=r$ and $\sum_{i=0}^{+\infty}\eta_i=1$. Now  define   $\varphi_n: \textnormal{L}^p({T},X)\times X \to \Rex$  by
			$$ \varphi_n(x,y)= \displaystyle\int\limits_T f(t,x(t)) d \mu(t) + n \displaystyle\int\limits_T  \ell(  x(t) -y ) d \mu(t) + \ell(y-x_0) +\delta_B(y).$$
			Then  \cref{Robustlocalminimizer} says that  
			$$\sup\limits_{n\in \N}\inf\limits_{\substack{w \in \textnormal{L}^p({T},X)\\ u\in  X}} \varphi_n(w,u)= \Intf{f}(x_0),$$
			and so there exists $\varepsilon_n \to 0^+$ (with $\varepsilon_n \in (0,\eta_0^2)$ for large enough $n$) such that $(x_0,x_0)$ is an $\varepsilon_n$-minimum of $\varphi_n$. Then, by applying the Borwein-Preiss Variational Principle (see \cref{BorweinPreiss:VP}) with the type-gauge function $\rho : (\textnormal{L}^p({T},X)\times X )^2  \to \R$ given by  $$\rho((w_1,u_1),(w_2,u_2)):=\int_{T} \ell (w_1(t) -w_2(t)) d\mu(t) + \ell(u_1 -u_2),$$ and the sequence $(\eta_i)_{i\geq 0}$, to the function $\varphi_n$ and the $\epsilon_n$-minimum $(x_0,x_0)$, we can find points $(x^n_i,y^n_i)_{i\in \N} , (x^n_\infty,y^n_\infty)  \in  \textnormal{L}^p({T},X)\times X$ such that:
			\begin{enumerate}[label={(\textnormal{BP.}\arabic*)},ref={(\textnormal{BP.}\arabic*)}]
				\item\label{Borwein-Preiss1}
				$\displaystyle\int\limits_T\ell (x_0 -x^n_\infty(t)) d\mu(t) + \ell(x_0 -y^n_\infty)\leq \frac{\varepsilon_n}{\eta_0}$, 
				$\displaystyle\int\limits_T\ell (x_i^n(t) -x^n_\infty(t)) d\mu(t) + \ell(y_i^n -y^n_\infty)\leq \frac{\varepsilon_n}{2^i\eta_0}$,
				\item\label{Borwein-Preiss2} $\varphi_n(x^n_\infty,y^n_\infty) + \phi_n(x^n_\infty,y^n_\infty)\leq \varphi_n(x_0,x_0)$, and
				\item\label{Borwein-Preiss3} $\varphi_n(w,u) + \phi_n(w,u)>\varphi_n(x^n_\infty,y^n_\infty) + \phi_n(x^n_\infty,y^n_\infty)$  for all $(w,u) \in  \textnormal{L}^p({T},X)\times X \backslash \{ (x^n_\infty,y^n_\infty)\}$,
			\end{enumerate}
			where 
			\begin{align*}
				\phi_n(w,u)= &\displaystyle\sum_{i=1}^{\infty} \eta_i \left( \int\limits_T   \ell( w(t)- x^n_i(t))  d \mu(t)\right)   + \sum_{i=1}^{\infty} \eta_i \ell (u- y^n_i )\\
				=& \displaystyle\int\limits_T \left( \sum_{i=1}^{\infty} \eta_i \ell( w(t)- x^n_i(t))  \right) d \mu(t) + \sum_{i=1}^{\infty} \eta_i \ell (u- y^n_i ).
			\end{align*}
			On the one hand, by \cref{Borwein-Preiss2} the quantity $\int_{T} h_n(t,x^n_\infty(t))d\mu(t)$ is finite, where $$h_n(t,v):= f(t,v)  + n \ell(v -  y^n_\infty) + \sum\limits_{i=1}^{\infty} \eta_i \ell (v- x^n_i(t))$$  is a normal  integrand functional, and by  \cref{Borwein-Preiss3} (taking $u=y^n_\infty$)
			\begin{align*}
				\displaystyle\int\limits_T h_n(t,x^n_\infty(t))d\mu(t)=& \inf\limits_{w \in \textnormal{L}^p({T},X)}  \int\limits_T  h_n(t,w(t)) d\mu(t)\\
			 =&  \int\limits_T  \inf\limits_{u\in X}h_n(t,u) d\mu(t),
			\end{align*}
			where the last equality is given  by \cref{dualitycastaing}. Then, by the sum rule (see, e.g., \cite[Exercise 3.1.12]{MR2144010}) we get
			\begin{equation}\label{equationmira}
			0 \in \Asub f(t,x_\infty^n(t))   + nu^*_n(t) + v_n^*(t) \; ae,
			\end{equation}
			where  $u^*_n(t):= \nabla \ell (x_\infty^n(t)- y_\infty^n) $ and $v^*(t):= \sum\limits_{i=1}^{\infty}  \eta_i  \nabla \ell (x_\infty^n(t)- x^n_i(t))$. The measurability  and differentiability  of these functions follow from   \cref{lemmadifferenciabilidadnorma} (notice that this infinite sum can also be seen as an integral functional). The  estimate of the gradient of the function $\ell$ gives us $\|u_n^*(t)\|^q\leq  p^q \| x_\infty^n(t)- y_\infty^n\|^p$  and $ \int\limits_T \| v_n^*(t)\|^q d \mu(t)  \to 0$. On the other hand,  by \cref{Borwein-Preiss3} (taking $w=x^n_\infty$),
			\begin{align*}
				n\int\limits_T   \ell( x_\infty^n(t) -  y^n_\infty) d\mu(t) +  \ell( y_\infty^n - x_0) + \sum\limits_{i=1}^{\infty} \eta_i \ell( y_\infty^n- y^n_i )\\
				= \inf\limits_{u \in X} \left( n\int\limits_T   \ell( x_\infty^n(t) -  u) d\mu(t) +  \ell( u - x_0) + \sum\limits_{i=1}^{\infty} \eta_i \ell( y_\infty^n- y^n_i )\right).
			\end{align*}
			Hence, again   \cref{lemmadifferenciabilidadnorma} gives us the differentiability of these three functions, and  after some  calculus yields  $ 0 = -n  \int_{T} u_n^*(t) d\mu(t) + w_n^* $ with $\| w_n^*\|\to 0$. Thus,  there exists $x_n^*:= -nu^*_n(t) - v_n^*(t)\in  \textnormal{L}^q({T},X^*)$ such that $x_n^*(t)\in\Asub f(t,x^n_\infty(t))$  (see Equation \cref{equationmira}),  and the previous computations give us  $$(\int_{T} \|x_n^*(t)\|^q)^{1/q} \leq n (\int_{T} \|u_n^*(t)\|^q)^{1/q}  + (\int_{T} \|v_n^*(t)\|^q)^{1/q} ,$$ and $\|  \int_{T} x_n^*(t) d\mu(t) \| \leq  \|  \int_{T} v_n^*(t) d\mu(t) \| + \|w^*_n\|  \to 0$.  By \cref{Borwein-Preiss2} we have  that $(x_\infty^n,y_\infty^n)$ is an $\varepsilon_n$-minimizer of $\varphi_n$, and so by   \cref{Robustlocalminimizer} we conclude that $n (\| x_\infty^n(t) - y_\infty^n\|_p)^p \to 0$, $  \int_{T} | f(t,x^n_\infty(t)) - f(t,x_0)|  d\mu(t)\to 0$. Finally,
			\begin{align*}
				\| x_n^*\|_q  \| x^n_\infty-y^n_\infty\|_p&\leq  \bigg( n (\displaystyle\int_{\T} \|u_n^*(t)\|^q d\mu(t))^{1/q}  + (\displaystyle\int_{\T} \|v_n^*(t)\|^qd\mu(t) )^{1/q} \bigg)  \| x^n_\infty-y^n_\infty\|_p \\ 
				&\leq  n p (\| x^n_\infty(\cdot)-y^n_\infty\|_p )^{p/q}  (\| x_n(\cdot)-y_n\|_p)  + \|v_n^*(\cdot)\|_q \| x^n_\infty(\cdot)-y^n_\infty\|_p\\
				&= n p  (\| x_n(\cdot)-y_n\|_p)^p  + \|v_n^*(\cdot)\|_q \cdot \| x_n(\cdot)-y_n\|_p \to 0.
			\end{align*} 
		\end{proof}

					\subsection{Proof of \textcolor{black}{\cref{separablereduction}}}\label{proofofseparablereduction}
						In this section, we give a proof of \cref{separablereduction}, by using some results on \emph{separable reductions} for the Fr\'echet subdifferential. 
						
						Let us  recall the concept of a \emph{rich family} in a non-separable Banach space. The symbol $S(X\times X^*)$ denotes the family of
						sets $U\times Y$ where $U$ and $Y$ are (norm-) separable closed linear
						subspaces of $X$ and $X^*$. A set $\mathcal{R} \subseteq S(X\times X^*)$ is
						called a \emph{rich family} if for every $U\times Y\in S(X\times X^*)$,
						there exists $V\times Z \in \mathcal{R}$ such that $U \subseteq V$, $Y
						\subseteq Z$ and $%
						\overline{ \bigcup_{n\in \mathbb{N} } U_n}\times \overline{ \bigcup_{n\in 
								\mathbb{N} } Y_n} \in \mathcal{R}$ whenever the sequence $(U_n\times
						Y_n)_{n\in \mathbb{N}} \subseteq \mathcal{R}$ satisfy $U_n \subseteq U_{n+1}$
						and $Y_n \subseteq Y_{n+1}$ (for more details, see \cite%
						{MR3447714,MR3582299,MR3533170} and references therein). In \cite[Theorem
						3.1]{MR3447714} the authors showed that there exists a rich family in non-separable
						Asplund spaces as in the following proposition.
						
						\begin{proposition}\cite[Theorem
							3.1]{MR3447714}
							\label{PROPSEPARABLE}  Let $(X, \|\cdot\| )$ be an Asplund space and let $f:X\to \mathbb{R} \cup \{ +\infty\}$ be any proper
							function. Then there exists a rich family $\mathcal{R} \subseteq S(X\times
							X^*)$ such that $Y_1 \subseteq Y_2$ whenever $V_1\times Y_1,V_2\times Y_2
							\in \mathcal{R}$ and $V_1 \subseteq V_2$, such that for
							every $V\times Y \in \mathcal{R}$, the assignment $Y \ni x^* \to x^*_{|_V}
							\in V^*$ is a surjective isometry from $Y$ to $V^*$, and for every $v \in V$
							we have that  
							\begin{align}  \label{SEPARABLEREDUC1}
								(\Asub f(v) \cap Y)_{|_V}=(\Asub f(v))_{|_V}=\Asub f_{|V}(v);
							\end{align}
							that is, in more details, if $v^*\in \Asub f_{|_V}(v)$, then there exists a
							unique $x^*\in \Asub f(v) \cap Y$ such that $x^*_{|_V}=v^*$ and $\| x^*\|
							=\| v^*\|$.
						\end{proposition}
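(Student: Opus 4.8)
The plan is to build $\mathcal{R}$ as a countable intersection of rich families, each one forcing a single requirement, and to rely on two standard facts about rich families in $S(X\times X^*)$: a countable intersection of rich families is again rich, and for any non-decreasing map $\Phi:S(X\times X^*)\to S(X\times X^*)$ sending separable pairs to larger separable pairs, the collection of its fixed points forms a rich family (see \cite{MR3447714,MR3582299,MR3533170}). Thus it suffices to exhibit, for each requirement, an enrichment map whose fixed points enjoy the desired feature, and then intersect.

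First I would secure the surjective-isometry requirement by a back-and-forth enrichment. To force $\|x^*_{|_V}\|=\|x^*\|$ for every $x^*\in Y$, fix a countable dense subset of $Y$ and, for each of its members $x^*$, adjoin to $V$ a sequence of unit vectors on which $x^*$ nearly attains its norm; in the closure this yields $\|x^*_{|_V}\|=\|x^*\|$ on all of $Y$. To force surjectivity of restriction, fix a countable dense subset of $V^*$ and adjoin to $Y$ a Hahn--Banach norm-preserving extension of each of its members. Alternating these two steps and closing increasing unions produces pairs $V\times Y$ for which $x^*\mapsto x^*_{|_V}$ is a surjective linear isometry onto $V^*$; choosing the extensions canonically, so that $Y$ is determined by $V$ and the choice is coherent under inclusion, delivers the monotonicity $V_1\subseteq V_2\Rightarrow Y_1\subseteq Y_2$.

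The heart of the argument is the subdifferential identity \cref{SEPARABLEREDUC1}. The inclusion $(\Asub f(v))_{|_V}\subseteq\Asub f_{|V}(v)$ is immediate: if $x^*\in\Asub f(v)$, restricting the defining $\liminf$ inequality to increments $h\in V$ shows $x^*_{|_V}\in\Asub f_{|V}(v)$. The substance is the reverse inclusion $\Asub f_{|V}(v)\subseteq(\Asub f(v)\cap Y)_{|_V}$, asserting that every Fr\'echet subgradient of the restriction extends to a genuine subgradient of $f$ lying in $Y$. I would obtain it through a separable-determination enrichment that exploits the Asplund hypothesis. Since $X$ is Asplund it carries a Fr\'echet-smooth renorming, so the viscosity description recalled in \cref{NotationCHAP5} applies, and the failure $x^*\notin\Asub f(v)$ is witnessed by a single direction $h\in X$ together with an $\varepsilon>0$ for which $(f(v+th)-f(v)-\langle x^*,th\rangle)/\|th\|<-\varepsilon$ for arbitrarily small $t>0$. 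Running over a countable dense set of base points $v\in V$ and, for each, over a countable dense set of candidate subgradients $v^*\in\Asub f_{|V}(v)$ inside the separable space $V^*$, I adjoin to $Y$ the canonical norm-preserving extension $x^*$ of each $v^*$ and, whenever that extension fails to be a subgradient, adjoin the associated witness direction $h$ to $V$. At a fixed point of the combined map no surviving candidate can have a failing extension, since its witness would already lie in $V$ and would strip $v^*$ from $\Asub f_{|V}(v)$; hence every $v^*\in\Asub f_{|V}(v)$ equals $x^*_{|_V}$ for some $x^*\in Y\cap\Asub f(v)$, and the equality of norms is supplied by the isometry already arranged. Intersecting this rich family with the isometry rich family yields a single $\mathcal{R}$ with all the stated properties.

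The step I expect to be the main obstacle is exactly this reverse inclusion. Extending a subgradient of $f_{|V}$ to one of $f$ is not a Hahn--Banach extension of a linear form but the extension of a two-sided infinitesimal minimality condition, and a naive linear extension of $v^*$ generally violates the Fr\'echet inequality in directions transverse to $V$. The delicate point is to match each surviving candidate $v^*$ with an extension that is genuinely a subgradient while simultaneously killing, by countably many witness directions, those candidates for which no admissible extension persists; it is here that the Asplund property is indispensable, guaranteeing that each failure is detected by a single norm-relevant test direction and that the whole selection remains stable under the closure of increasing chains.
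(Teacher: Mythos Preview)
The paper does not give a proof of this proposition: it is quoted verbatim as \cite[Theorem 3.1]{MR3447714} and used as a black box in the proof of \cref{separablereduction}. There is therefore no argument in the paper to compare your proposal against.

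As a side remark on your sketch itself: the overall architecture (build $\mathcal{R}$ as an intersection of rich families, each obtained as the fixed points of a monotone enrichment map) is indeed the standard mechanism in \cite{MR3447714,MR3582299,MR3533170}. However, one step is not right as stated. You write ``Since $X$ is Asplund it carries a Fr\'echet-smooth renorming,'' and then use the viscosity description from \cref{NotationCHAP5} to reduce membership in $\Asub f$ to a single-direction test. This is false in general: there exist Asplund spaces (certain $C(K)$ spaces of Haydon type) with no equivalent G\^ateaux-smooth norm, let alone a Fr\'echet-smooth one. The renorming fact the paper invokes in \cref{NotationCHAP5} is for \emph{separable} Asplund spaces, which is precisely the standing hypothesis elsewhere in the article; \cref{PROPSEPARABLE}, by contrast, is stated for an arbitrary Asplund space, so you cannot import that tool here. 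The actual proof in \cite{MR3447714} of the reverse inclusion $\Asub f_{|V}(v)\subseteq(\Asub f(v)\cap Y)_{|_V}$ works directly from the $\liminf$ definition of $\Asub$, encoding the failure of the Fr\'echet inequality by countable families of test points and rational tolerances, without any smooth bump or renorming.
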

						
						Besides, it has been proved that \emph{intersections of countably many rich
							families of a given space is (not only non-empty but even) rich} (see \cite[%
						Proposition 1.1]{MR1622793} or \cite[Proposition 1.2]{MR3533170}). Then for
						the case $(T,\mathcal{A})=(\mathbb{N},\mathcal{P}(\mathbb{N}))$ there must
						exist a rich family $\mathcal{R}$ for the integrand function $(f_n)$,
						satisfying the properties of  \cref{PROPSEPARABLE} and with \cref%
						{SEPARABLEREDUC1} uniformly for every $n\in \mathbb{N}$, as well as for the
						integral functional $\Intf{f}$. Using this family, we can extend all the previous
						statements to arbitrary Asplund spaces in the case when $(T,\mathcal{A})=(\mathbb{N}%
						,\mathcal{P}(\mathbb{N}))$.
						
					\begin{proof}[Proof of \cref{separablereduction}]
						Assume that the assumptions on $f,g$ in    \cref{theoremsubdiferential} hold in an Asplund space $X$ (the assumptions in  \cref{theoremsubdiferentialinfinity}, respectively). Let $x^*\in \Asub \Intf{f}(x)$ and $\rho$ be a $w^*$-continuous seminorm on $X^*$; for instance, $\rho= \max_{i=1,\ldots ,p} \langle \cdot, e_i\rangle $ with some $e_i \in X$. Then consider $V\times Y \in \mathcal{R}$ such that $x, e_i \in V$, $i=1,...,p$ and $x^*\in Y$. Then  $x^*_{|V}=:y^*\in \Asub (\Intf{f}){|_V}(x)$.   Then applying   \cref{theoremsubdiferential}, there  exist sequences  $y_n \in V$, $x_n \in \textnormal{L}^p(T,V)$, $z_n^* \in \textnormal{L}^q(T,V^*)$ such that:
							\begin{enumerate}[label={(\alph*)},ref={(\alph*)}]
								\item $z_n^*(t) \in  \Asub  f_{|_V}(t,x_n(t))$ ae,
								\item $\| x- y_n\| \to 0$, $\displaystyle \int_T \| x - x_n(t)\|^p d\mu(t) \to 0 $,
								\item $\| z_n^*(\cdot)\|_q \| x_n(\cdot)- y_n\|_p \to 0$, ($ \displaystyle\int_T \| x_n^*(t)\| \| x_n(t)- y_n\| d\mu(t) \to 0$ resp.)
								\item $\displaystyle\int_T \langle z_n^*(t), x_n(t)- x\rangle d\mu(t)\to 0$,  $\rho\left( \displaystyle \int_T z_n^*(t) d\mu(t) - x^*\right) \to 0$,
								\item $ \displaystyle \int_T| f(t,x_n(t)) - f(t,x)| d\mu(t)\to 0$.
							\end{enumerate}
							Then define $x_n^*(t)$ as the unique element in $\Asub f(t,x_n(t))\cap Y$ such that $\| x_n^*(t)\| =\| z_n^*(t)\|$ and $(x_n^*(t))_{|_V}=z^*_n(t)$.  Now  $\| y_n^*(\cdot)\|_q=\| x_n^*(\cdot)\|_q$, which implies that $$\| x_n^*(\cdot)\|_q \| x_n(\cdot)- y_n\|_p \to 0$$ ($ \displaystyle\int_T \| x_n^*(t)\| \| x_n(t)- y_n\| d\mu(t) \to 0$, respectively) and $x_n^* \in \textnormal{L}^q(T,X^*)$.  From the fact that $x_n(t),y_n, e_i, x \in V$  and $(x_n^*(t))_{|_V}=z^*_n(t)$ we conclude that 
						$\int_T \langle x_n^*(t), x_n(t)- x\rangle d\mu(t)\to 0$ and $\rho\left(  \int_T x_n^*(t) d\mu(t) - x^*\right) \to 0$. Then, the sequences $y_n$, $x_n(\cdot)$ and $x_n^*(\cdot)$ satisfy the required properties.  
					\end{proof}
				\subsection{Fatou-type lemma}\label{Section_Fatou_type_MordukhovichSub}
				The next lemma gives the first sequential approximation of subgradients in the limiting subdifferential of the $\Intf{f}$. This results uses the sequential representation of the Fr\'echet subdifferential found in \cref{theoremsubdiferentialinfinity}, together with a Fatou's lemma for sequences in Banach spaces. This will allow to get a relation between the limits of the integral of subgradients in the Fr\'echet subdifferential and the integral of the limiting subdifferential. 
				
				\begin{lemma}
					\label{MordukhovichSub}  Consider $x^* \in \sub \Intf{f}(x)$, $y^* \in \sub^\infty \Intf{f}(x)$, a finite family of linearly independent
					points $\{ e_i\}_{i=1}^p$, $W:=\spn \{ e_i\}$ and $\rho(\cdot) :=\max\{ |
					\langle \cdot , e_i \rangle | \}$. Then there exist sequences $x_n,y_n \in 
					\textnormal{L}^\infty({T},X)$, $x_n^*,y_n^* \in \textnormal{L}^1({T}%
					,X^*)$ and $\lambda_n \to 0^+$ such that:  
					\begin{enumerate}[label={(\alph*)},ref={(\alph*)}]
						\item \label{MordukhovichSuba} $x_n^*(t) \in \Asub f(t,x_n(t))$ ae,
						\item \label{MordukhovichSubb}$\ \| x - x_n(\cdot)\|_{\infty} \to 0 $,
						\item \label{MordukhovichSubc} $\rho(\displaystyle \int\limits_T x_n^*(t) d\mu(t) - x^*)\to 0$,
						\item \label{MordukhovichSubd}$ \lim \int\limits_T | f(t,x_n(t)) - f(t,x)|  d \mu(t)= 0$,
					\end{enumerate}
					as well as
					\begin{enumerate}[label={(\alph*$^\infty$)},ref={(\alph*$^\infty$)}]
						\item \label{MordukhovichSubasing} $y_n^*(t) \in \Asub f(t,y_n(t))$ ae,
						\item \label{MordukhovichSubbsing}$ \| x - y_n(\cdot)\|_{\infty} \to 0 $,
						\item \label{MordukhovichSubcsing}$\rho (\lambda_ n \cdot \displaystyle \int\limits_T y_n^*(t) d\mu(t) - y^*)\to 0$,
						\item \label{MordukhovichSubdsing}$ \lim \int\limits_T | f(t,y_n(t)) - f(t,x)|  d \mu(t)= 0$,
						\item $y^* \in (B(y^*_n) \cap W)^{-}$, where $$B(y^*_n):= \{ u \in X :  \liminf \int\limits_T \langle y_n^* ,u \rangle^{+}d\mu  <+\infty \}.$$  
					\end{enumerate}
					
					Moreover, if there exists a bounded sequence $x_n^*$ (in $\textnormal{L}^1({T},X^*)$) (or $\lambda_n y_n^*$, respectively) satisfying the above
					properties, then  
					\begin{align*}
					x^* \in \displaystyle\int\limits_{{T}} \sub f(t,x) d\mu(t) +
					C((x^*_n))^- + W^{\perp}, \\
					 y^* \in \displaystyle\int\limits_{{T}} \sub^\infty f(t,x) d\mu(t) + C(\lambda_n y^*_n)^- +
					W^{\perp} \text{ (respectively) },
					\end{align*}
					where $C(x^*_n):=\{ u \in X : ( \langle x_n^*(\cdot),u
					\rangle^{+})_{n\in \mathbb{N}} \text{ is uniformly integrable}\}$. 
				\end{lemma}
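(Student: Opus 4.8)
The plan is to establish the two halves of \cref{MordukhovichSub} separately: first the construction of the approximating sequences, which is a diagonal argument combining the definitions of $\sub\Intf{f}(x)$ and $\sub^\infty\Intf{f}(x)$ with \cref{theoremsubdiferentialinfinity}; and then the ``Moreover'' implication, which is where a Fatou-type lemma for $\textnormal{L}^1(T,X^*)$-bounded sequences enters.

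For the construction, recall that $x^*\in\sub\Intf{f}(x)$ means there are $u_m\to x$ with $\Intf{f}(u_m)\to\Intf{f}(x)$ and $u^*_m\in\Asub\Intf{f}(u_m)$ with $u^*_m\overset{w^*}{\to}x^*$. Since $\rho$ is a $w^*$-continuous seminorm, $\rho(u^*_m-x^*)\to0$; and \cref{lemma:convergenciaL1}, applied to the constant functions $u_m\1_T$, gives $\int_T|f(t,u_m)-f(t,x)|\,d\mu(t)\to0$. For each fixed $m$ I would invoke \cref{theoremsubdiferentialinfinity} (with $g=0$) at the point $u_m$, with the Fréchet subgradient $u^*_m$ and the seminorm $\rho$, obtaining sequences in $X$, $\textnormal{L}^\infty(T,X)$ and $\textnormal{L}^1(T,X^*)$; a diagonal choice $n\mapsto k(n)$, large enough that $\|x-x_n(\cdot)\|_\infty$, $\rho(\int_T x^*_n\,d\mu-u^*_n)$ and $\int_T|f(t,x_n(t))-f(t,u_n)|\,d\mu(t)$ are each $\le1/n$, then produces $x_n,x^*_n$ satisfying \cref{MordukhovichSuba,MordukhovichSubb,MordukhovichSubc,MordukhovichSubd}, the convergence in \cref{MordukhovichSubc} coming from $\rho(\int_T x^*_n\,d\mu-x^*)\le\rho(\int_T x^*_n\,d\mu-u^*_n)+\rho(u^*_n-x^*)$. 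The singular sequences $y_n,y^*_n,\lambda_n$ are produced exactly the same way, starting from a generating sequence $w_m\to x$, $\Intf{f}(w_m)\to\Intf{f}(x)$, $w^*_m\in\Asub\Intf{f}(w_m)$, $\mu_m\to0^+$, $\mu_m w^*_m\overset{w^*}{\to}y^*$ and setting $\lambda_n:=\mu_n$; here \cref{MordukhovichSubcsing} follows from $\rho(\lambda_n\int_T y^*_n\,d\mu-y^*)\le\lambda_n\,\rho(\int_T y^*_n\,d\mu-w^*_n)+\rho(\mu_n w^*_n-y^*)$. Finally, the property $y^*\in(B(y^*_n)\cap W)^-$ is checked directly: for $u\in B(y^*_n)\cap W$ one has $\lambda_n\int_T\langle y^*_n(t),u\rangle\,d\mu(t)\to\langle y^*,u\rangle$ because $u$ is a linear combination of the $e_i$'s, while along a subsequence on which $\int_T\langle y^*_n,u\rangle^+\,d\mu$ stays bounded the quantity $\lambda_n\int_T\langle y^*_n,u\rangle\,d\mu\le\lambda_n\int_T\langle y^*_n,u\rangle^+\,d\mu$ tends to $0$; hence $\langle y^*,u\rangle\le0$.

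For the ``Moreover'' implication, suppose $(x^*_n)$ is bounded in $\textnormal{L}^1(T,X^*)$ and satisfies \cref{MordukhovichSuba,MordukhovichSubb,MordukhovichSubc,MordukhovichSubd}. Passing to a subsequence I may assume $f(t,x_n(t))\to f(t,x)$ ae (by \cref{MordukhovichSubd}) and, by \cref{MordukhovichSubb}, $x_n(t)\to x$ for ae $t$, so that $x_n(t)\overset{f_t}{\to}x$ for ae $t$. Then I would apply a Fatou lemma for $\textnormal{L}^1$-bounded sequences in the separable dual $X^*$: along a further subsequence there is $\bar x^*\in\textnormal{L}^1(T,X^*)$ such that $\bar x^*(t)$ is a sequential $w^*$-limit point of $(x^*_n(t))$ for ae $t$ and $\int_T x^*_n\,d\mu\overset{w^*}{\to}\int_T\bar x^*\,d\mu+v^*$ with $v^*\in C((x^*_n))^-$ (the $w^*$-convergence of $\int_T x^*_n\,d\mu$ along a subsequence being automatic from Banach--Alaoglu and the $w^*$-metrizability of bounded sets of $X^*$). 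Since $x^*_n(t)\in\Asub f(t,x_n(t))$ and $x_n(t)\overset{f_t}{\to}x$ ae, the definition of $\sub f(t,\cdot)$ forces $\bar x^*(t)\in\sub f(t,x)$ ae, hence $\int_T\bar x^*\,d\mu\in\int_T\sub f(t,x)\,d\mu$; and from \cref{MordukhovichSubc}, $\langle x^*-\int_T\bar x^*\,d\mu-v^*,e_i\rangle=0$ for every $i$, so $x^*-\int_T\bar x^*\,d\mu-v^*\in W^\perp$ and the first inclusion follows. The singular inclusion is obtained identically by applying the same Fatou lemma to the $\textnormal{L}^1$-bounded sequence $(\lambda_n y^*_n)$: because $\lambda_n y^*_n(t)\in\lambda_n\Asub f(t,y_n(t))$, $\lambda_n\to0^+$ and $y_n(t)\overset{f_t}{\to}x$ ae, every $w^*$-limit point of $(\lambda_n y^*_n(t))$ belongs to $\sub^\infty f(t,x)$, while \cref{MordukhovichSubcsing} now plays the role of \cref{MordukhovichSubc}.

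The main obstacle is precisely the Fatou lemma used in the last step. One needs, at once, a measurable selection of the pointwise sequential $w^*$-upper limit $\Ls x^*_n(t)$ --- \emph{not} merely of its $w^*$-closed convex hull, for otherwise one would only land in $\int_T\cco^{w^*}\sub f(t,x)\,d\mu$ rather than in $\int_T\sub f(t,x)\,d\mu$ --- together with a decomposition of $w^*\text{-}\lim\int_T x^*_n\,d\mu$ as $\int_T\bar x^*\,d\mu$ plus a defect lying in $C((x^*_n))^-$. The delicate point is that an arbitrary pointwise $w^*$-limit point $\bar x^*(t)$ need not dominate $\limsup_n\langle x^*_n(t),u\rangle$ for a prescribed direction $u$, so controlling the defect in a direction $u\in C((x^*_n))$ genuinely requires the uniform integrability of $(\langle x^*_n(\cdot),u\rangle^+)_n$ in order to rule out escape of mass ``on the positive side''; this is typically handled by truncating $x^*_n$ on the sets $\{\|x^*_n(\cdot)\|>a\}$ and running a Dunford--Pettis argument on the truncated pieces.
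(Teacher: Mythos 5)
Your proposal is correct and follows essentially the same route as the paper: a diagonal argument combining the defining sequences of $\sub \Intf{f}(x)$ and $\sub^\infty \Intf{f}(x)$ with \cref{theoremsubdiferentialinfinity} and \cref{lemma:convergenciaL1} for the construction, the same direct computation for $y^*\in(B(y^*_n)\cap W)^-$, and a Fatou-type lemma for $\textnormal{L}^1(T,X^*)$-bounded sequences plus the identification $\Ls^{w^*}\{x^*_n(t)\}\subseteq\sub f(t,x)$ ae for the ``Moreover'' part. The one ingredient you flag as the main obstacle --- the Fatou lemma yielding a measurable selection of the pointwise sequential $w^*$-upper limit together with a defect term in $C((x^*_n))^-$ (and a $W^\perp$ term) --- is exactly what the paper imports by citation rather than proves, so your sketch of it via truncation and uniform integrability is not a gap relative to the paper's own argument.
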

				
				\begin{proof}
					By the definition of  $\sub \Intf{f}(x)$ and $\sub^\infty \Intf{f}(x)$, there exist  sequences $z^*_n \in \Asub \Intf{f}(z_n)$, $s_n^* \in  \Asub \Intf{f}(s_n)$ and $\lambda_n \to 0^+$   such that $z_n, s_n \overset{\Intf{f}}{\to} x$,   $z_n^* \overset{w^*}{\to} x^*$ and $\lambda_n \cdot s_n^* \overset{w^*}{\to} y^*$. Whence, by   \cref{theoremsubdiferentialinfinity} (and using a diagonal argument) there exist  sequences $x_n,y_n \in \textnormal{L}^\infty({T},X)$, $x_n^*,y_n^* \in \textnormal{L}^1({T},X^*)$  such that
					\begin{enumerate}[label={(\roman*)},ref={(\roman*)}] 
						\item $x_n^*(t) \in \Asub f(t,x_n(t))$ ae,
						\item $ \| x - x_n(\cdot)\|_\infty \to 0$,
						\item \label{MordukhovichSub:3}$\rho( \int_{\T} x_n^*(t) d\mu(t) - x^*) \to 0$,
						\item \label{MordukhovichSub:4}$ \int_{\T}   f(t,x_n(t))d\mu(t) \to   \int_{\T}  f(t,x) d\mu(t)$,
					\end{enumerate}
					
					\begin{enumerate}[label={(\roman*$^\infty$)},ref={(\roman*$^\infty$)}]
						\item  $y_n^*(t) \in \Asub f(t,y_n(t))$ ae,
						\item $\| x - y_n(\cdot)\|_\infty  \to 0$,
						\item \label{MordukhovichSub:sing3}$\rho( \lambda_n \cdot \int_{\T} y_n^*(t) d\mu(t) - y^*) \to 0$,
						\item \label{MordukhovichSub:sing4}$\int_{\T}  f(t,y_n(t))d\mu(t) \to \int_{\T}  f(t,x)  d\mu(t)$.
					\end{enumerate} 
					So, by  \cref{lemma:convergenciaL1} we conclude that $$ \lim \int_{T} | f(t,x_n(t)) - f(t,x)|  d \mu(t)= 0$$ and $$ \lim \int_{T} | f(t,y_n(t)) - f(t,x)|  d \mu(t)= 0.$$ 
					Moreover, if we take  $u \in B(y^*_n)\cap W$, then 					$$\langle y^* , u \rangle = \lim \lambda_n \int \langle y_n ^*,u \rangle d\mu(t)\leq  \liminf \lambda_n \int \langle y_n(t) ^*,u \rangle^+ d\mu(t) =0.$$ 
					This proves the first part. To prove  the second part, consider  a continuous projection  $P_W :X \to W$,  and  assume that $\sup\limits_{n} \int\limits_{{T}} \| x_n^*(t) \| d\mu(t) < \infty$. So, by \cite[Corollary 4.1]{MR2197293} we have that 
					\begin{align*}
						\Ls^{w^*}\{ \displaystyle  \int\limits_{{T}} x_n^*(t) d\mu(t)    \} \subseteq  \displaystyle  \int\limits_{{T}} \Ls^{w^*}\{ x_n^*(t)\} d\mu(t) +  C(x^*_n)^{-} + W^{\perp},
					\end{align*} 
					where $ \Ls^{w^*}\{ x_n^*(t)\}$ represents the sequential upper limit of the sequence $(x_n^*(t))$. Moreover, if $\sup\limits_{n} \int\limits_{{T}} \| x_n^*(t) \| d\mu(t)$ is finite, then (up to  subsequences)  $$w_n ^*:= \int_{T} x^*_n(t) d\mu(t)\to w^*_0.$$ Moreover, $w_n^* = P^*_W(w^*_n) + w^*_n - P^*_W(w^*_n)$ and by \cref{MordukhovichSub:3} we get that $P^*_W(w^*_n)  \to P^*_W(x^*)$. Therefore, $w_0^* - P^*_W(x^*)\in W^{\perp}$, and then we conclude that $x^* =  w_0^* + P^*_W(x^*)-w_0^* + x^* - P^*_W(x^*) \in  \int\limits_{{T}} \Ls^{w^*}\{ x_n^*(t)\} d\mu(t) +  C(x^*_n)^{-} + W^{\perp}$.

					 Finally, we have to prove that $\Ls^{w^*}\{ x_n^*(t)\}  \subseteq  \sub f(t,x)$ for ae $t$. Indeed, from the previous convergence (and taking a subsequence if necessary, see, e.g., \cite[Theorem 13.6]{MR1321140}) we can take a measurable set $\tilde{{T}} $ such that $\mu({T}\backslash \tilde{{T}})=0$  and for every $t \in \tilde{{T}}$, $ x_n(t) \to x_0$ and $f(t,x_n(t) ) \to f(t,x)$.  Then take an integrable selection $a^*(t) \in \Ls^{w^*}\{ x_n^*(t)\}$ and fix $t_0 \in \tilde{{T}}$. So, there exists a subsequence $x^*_{n_{k(t_0)}}(t_0) \to a^*(t_0) $, and then $x_{n_{k(t_0)}}(t_0) \to x_0$ and $f(t_0,x_{n_{k(t_0)}}) \to f(t,x)$. Hence,  $a^*(t_0)\in \sub f(t_0,x)$. The case for the point $y^*$ is similar, and so we omit the proof.
				\end{proof}
			
				\subsection{Characterization of  Integrable compact sole property}\label{characterizationofcompactsole}
				The final lemma allows us to understand the definition of integrable compact soles in terms of the primal space instead of the dual space, using
				the polar cone.
				
				\begin{lemma}
					\label{compactsoloiffnonemptyinterior}  Let $C:{T}\rightrightarrows X$ be a
					measurable multifunction with non-empty $w$-closed convex values, let $e \in
					X$ and let $\delta >0$ . Then the following statement are equivalent: 
					
					\begin{enumerate}[label={(\alph*)},ref={(\alph*)}]
						\item\label{lemma:parta} For every measurable selection $c^*$  of   $C^-(t)(:=(C(t))^-)$ one has $$\delta^{-1} \langle c^*(t) , -e\rangle \geq \|c^*(t) \| \;ae.$$
						\item\label{lemma:partb} The ball in $L^{\infty}({\T},X)$ around $e$  with radius  $\delta$ is contained in $$\{x(\cdot) \in \textnormal{L}^\infty({\T},X) : x(t)\in C(t) \text{ ae}\}.$$
					\end{enumerate}
				\end{lemma}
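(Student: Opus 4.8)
The plan is to push the equivalence down to a pointwise statement about a single weakly closed convex cone and then settle that statement using the support function of a ball. Put $B:=\mathbb{B}_X(e,\delta)$, a closed, convex, separable subset of $X$. I will prove: (i) condition \ref{lemma:parta} is equivalent to ``for $\mu$-a.e.\ $t$, every $c^*\in C^-(t)$ satisfies $\delta^{-1}\langle c^*,-e\rangle\ge\|c^*\|$''; (ii) condition \ref{lemma:partb} is equivalent to ``$B\subseteq C(t)$ for $\mu$-a.e.\ $t$''; and (iii) for a fixed weakly closed convex cone $C\subseteq X$, one has $B\subseteq C$ if and only if $\delta^{-1}\langle c^*,-e\rangle\ge\|c^*\|$ for every $c^*\in C^-$. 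Chaining (i), the pointwise use of (iii), and (ii) then proves the lemma.

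Step (iii) is where the content lies. Since $C$ is closed and convex, $B\subseteq C$ holds if and only if $\sigma_B(c^*)\le\sigma_C(c^*)$ for every $c^*\in X^*$, and a one-line computation gives $\sigma_B(c^*)=\langle c^*,e\rangle+\delta\|c^*\|$. As $C$ is a cone with $0\in C$, the support function $\sigma_C$ takes only the values $0$ and $+\infty$, and $\{c^*:\sigma_C(c^*)=0\}=C^-$; hence $\sigma_B\le\sigma_C$ on $X^*$ is equivalent to $\langle c^*,e\rangle+\delta\|c^*\|\le0$ for all $c^*\in C^-$, that is, to $\delta^{-1}\langle c^*,-e\rangle\ge\|c^*\|$ for all $c^*\in C^-$.

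For step (ii): ``$\Leftarrow$'' is immediate, since $\|x(\cdot)-e\|_\infty\le\delta$ forces $x(t)\in B$ a.e.; for ``$\Rightarrow$'', fix a countable dense subset $\{d_k\}$ of the separable set $B$, apply \ref{lemma:partb} to the constant functions $d_k\1_{\T}$ to get $d_k\in C(t)$ off a null set $N_k$, and observe that on $\T\setminus\bigcup_k N_k$ one has $\{d_k\}\subseteq C(t)$, hence $B=\overline{\{d_k\}}\subseteq C(t)$ because $C(t)$ is norm-closed. For step (i): ``$\Leftarrow$'' is clear; for ``$\Rightarrow$'', one first notes that $C^-$ is graph measurable — take a Castaing representation $C(t)=\overline{\{c_n(t):n\in\N\}}$ with measurable $c_n$ (from \cref{measurableselectiontheorem} and \cite{MR0467310}, $C$ being measurable with closed values in the separable space $X$), so that $c^*\in C^-(t)\iff\langle c^*,c_n(t)\rangle\le0$ for all $n$, whence $\grafo C^-\in\mathcal{A}\otimes\mathcal{B}(X^*)$ — and then, if (i) failed on a set of positive measure, the graph-measurable multifunction $t\mapsto\{c^*\in C^-(t):\delta^{-1}\langle c^*,-e\rangle<\|c^*\|\}$ would, by \cref{measurableselectiontheorem}, admit a measurable selection there; extending it by the zero selection (note $0\in C^-(t)$ always) produces a measurable selection of $C^-$ violating \ref{lemma:parta}, a contradiction. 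The main obstacle is precisely this measurability bookkeeping in step (i): establishing graph measurability of $C^-$ and of the ``bad-direction'' multifunction so that the measurable selection theorem applies. Everything else — the density argument in (ii) and the support-function identity in (iii) — is routine.
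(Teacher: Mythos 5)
Your proof is correct, and it takes a genuinely different route from the paper's. The paper never passes through a pointwise statement: for \ref{lemma:parta}$\Rightarrow$\ref{lemma:partb} it fixes $h\in\textnormal{L}^\infty({\T},X)$ with $\|h\|_\infty\le 1$, applies \ref{lemma:parta} to the countably many Castaing selections $c_n^*$ of $C^-$, deduces $e+\delta h(t)\in (C^-(t))^-$ off a single null set by $w^*$-density, and invokes the bipolar theorem; for \ref{lemma:partb}$\Rightarrow$\ref{lemma:parta} it constructs measurable selections $h_n(t)\in\mathbb{B}$ with $\langle h_n(t),c^*(t)\rangle\ge\|c^*(t)\|-\varepsilon_n$ and tests $e+\delta h_n(t)\in C(t)$ against $c^*(t)$. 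Your route localizes everything to a fixed $t$: the identity $\sigma_{\mathbb{B}(e,\delta)}=\langle\cdot\,,e\rangle+\delta\|\cdot\|$ combined with $\sigma_{C(t)}=\delta_{C^-(t)}$ settles the pointwise equivalence in one line, and your \ref{lemma:partb}$\Rightarrow$\ref{lemma:parta} direction then needs no near-norm-attaining selections at all. The price is your step (i): to upgrade ``every measurable selection of $C^-$ satisfies the inequality a.e.'' to the pointwise statement you must know that the set of bad $t$ is measurable, which requires the von Neumann--Aumann projection theorem (available here since the measure space is complete and $\sigma$-finite) before \cref{measurableselectiontheorem} can be applied to the bad-direction multifunction; the paper sidesteps this entirely because it only ever applies \ref{lemma:parta} to the countably many Castaing selections. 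One caveat you share with the paper: both arguments use that $C(t)$ is a cone --- you say so explicitly in step (iii), while the paper relies on it silently when it concludes $e+\delta h(t)\in(C^-(t))^-$ implies $e+\delta h(t)\in C(t)$, since the bipolar of $C(t)$ is $\ccone(C(t))$. With merely convex values the implication \ref{lemma:parta}$\Rightarrow$\ref{lemma:partb} is false (any proper $w$-closed convex $C(t)$ with $0$ in its interior has $C^-(t)=\{0\}$, making \ref{lemma:parta} vacuous), so the cone hypothesis from the surrounding remark must be read into the lemma; your making it explicit is a point in your favor rather than a defect.
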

				
				\begin{proof}
					First, by Castaing's representation  there exist  sequences of measurable selections $c_n$ and $c_n^*$ of  $C$ and $C^-$ respectively such that $C(t)=\overline{\{c_n(t)\}}^{\norma}$ and $C^{-}(t)=\overline{\{c^*_n(t)\}}^{w^*}$  (the measurability of   $C^-$ follows from the fact that  $C^-(t )= \bigcap_{n\in\N} \{ x^* \in X^* : \langle c_n(t) ,x^* \rangle \leq 0   \}$).

					 Suppose that \cref{lemma:parta} holds and consider $h \in \textnormal{L}^\infty({T},X)$ and  $\| h\|_\infty\leq 1$. Then $\langle e+ \delta h(t),c_n^*(t) \rangle \leq  \langle e ,c_n^*(t) \rangle +  \delta \| c_n^*(t)\|\leq 0$ ae. Since the last inequality holds for all $n$ and the measurable selection $c^*_n(t)$ is dense in $C^-(t)$,  we conclude that $e+\delta h(t)\in (C^{-}(t))^-$ ae.  Then, using the Bipolar theorem  (see, e.g., \cite[Theorem 3.38]{MR2766381}), we obtain that $e+\delta h(t)\in C(t)$ ae. 
					 
					 Now suppose that \cref{lemma:partb} holds and consider a measurable selection $c^*$ of $C^-$ and $(0,1)\ni \varepsilon_n \to 0$. Take a measurable selection $h_n(t) \in B(0,1)$ such that $\langle h_n(t),c^*(t)\rangle \geq \| c^*(t)\| -\varepsilon_n$ ae. So, $e + \delta h(t) \in C(t)$ ae, and hence $\langle c^*(t), e + \delta h(t) \rangle \leq 0$ ae. Therefore, $\delta^{-1} \langle c^*(t) , -e\rangle \geq \|c^*(t) \|- \varepsilon_n$ ae. From the stability of  null sets under countable intersections we get $\delta^{-1} \langle c^*(t) , -e\rangle \geq \|c^*(t) \|$ for ae $t$.
				\end{proof} 
				
\bibliographystyle{siamplain}
\bibliography{references}
\end{document}